\newtheorem{thm}{Theorem}[section]
\newtheorem{lem}{Lemma}[section]
\theoremstyle{definition}
\theoremstyle{remark}
\newtheorem{rem}[thm]{Remark}
\numberwithin{equation}{section}
\newcommand{\E}{\mathbf{E}\,}
\newcommand{\Tr}{\mathrm{Tr}\;\!}
\newcommand{\im}{\mathrm{Im}\;\!}
\newenvironment{Proof of}{\removelastskip\par\medskip
\noindent{\em Proof of} \rm}{\penalty-20\null\hfill$\square$\par\medbreak}
\begin{document}

\title{\bf {On the Asymptotic Distribution of Singular Values of  Products of
  Large Rectangular Random Matrices}}

\author{{\bf N. Alexeev}\\{\small S.-Peterburg state University}\\{\small S.-Petersburg, Russia}\and{\bf F. G\"otze}
\\{\small Faculty of Mathematics}
\\{\small University of Bielefeld}\\{\small Germany}
\and {\bf A. Tikhomirov}\\{\small Department of Mathematics
}\\{\small Komi Research Center of Ural Branch of RAS,}\\{\small Syktyvkar state University}
\\{\small Syktyvkar, Russia}}
\date{}
\maketitle
 \footnote{Partially supported by RF grant of the leading scientific schools
NSh-4472.2010.1. Partially supported by RFBR, grant  N 09-01-12180 and RFBR--DFG, grant N 09-01-91331. 
Grant of RF Government  ``Chebyshev Laboratory.''
Partially supported by CRC 701 ``Spectral Structures and Topological
Methods in Mathematics'', Bielefeld}
 

\today

\begin{abstract}We consider products of independent large random rectangular  matrices with independent entries. 
The limit distribution of the expected empirical distribution of singular values of such products is computed. 
The  distribution function   is described by its Stieltjes transform, which satisfies  some algebraic equation. 
In the particular case of square matrices we get a well-known  distribution which moments are Fuss-Catalan numbers.
\end{abstract}
\maketitle
\markboth{ N. Alexeev, F. G\"otze, A.Tikhomirov}{Product of random matrices}

\section{Introduction}
Let $m\ge 1$ be a fixed integer. For every $n\ge1$ consider  a  nondecreasing set of $m+1$
integers $p_0=n, p_1,\cdots, p_m$ where $p_{\nu}=p_{\nu}(n)$ for
$\nu=1,\ldots, m$,  depending  on $n$ and $p_{\nu}\ge n$.
For every $n\ge 1$ we consider an array of independent complex random variables
 $X^{(\nu)}_{jk},\ {  }1\le j\le p_{\nu-1},1\le k\le p_{\nu}$, $\nu=1,\ldots,m$  defined on a common probability space $\{\Omega_n,\mathbb F_n,\Pr\}$
with $\E X^{(\nu)}_{jk}=0$ and let $\E {|X^{(\nu)}_{jk}|}^2=1$. Let
$\mathbf X^{(\nu)}$ denote the  $p_{\nu-1}\times p_{\nu}$  matrix with entries
$[\mathbf X^{(\nu)}]_{jk}=\frac1{\sqrt{p_{\nu}}}X^{(\nu)}_{jk}$, for $1\le j\le p_{\nu-1}, 1\le k\le p_{\nu}$.
 The random variables $X^{(\nu)}_{jk}$ may depend on $n$ but
for simplicity we shall not make this explicit in our  notations.
Denote by $s_1\ge\ldots\ge s_n$ the singular values  of
the random  matrix
$\mathbf W:= \prod_{\nu=1}^m\mathbf X^{(\nu)}$
and define  the empirical distribution of  its squared singular values by
$$
\mathcal F_n(x)=\frac1n\sum_{k=1}^nI_{\{{s_k}^2\le x\}},
$$
where $I_{\{B\}}$ denotes the indicator of an event $B$.
We shall investigate the
approximation of the expected spectral distribution $F_n(x)=\E \mathcal F_n(x)$
 by
  the distribution function  $G_{\mathbf y}(x)$ which 
defined by its Stieltjes transform $s_{\mathbf y}(z)$
in the equation (\ref{main0}) below.

We consider  the Kolmogorov distance between the distributions
$ F_n(x)$ and $G_{\mathbf y}(x)$
$$
\Delta_n:=\sup_x|F_n(x)-G_{\mathbf y}(x)|.
$$
The main result of this paper is the following
\begin{thm}\label{main}Let $\E X^{(\nu)}_{jk}=0$, $\E|X^{(\nu)}_{jk}|^2=1$.
Assume the Lindeberg condition holds, i.e. for any $\tau>0$
\begin{equation}\notag
 L_n(\tau):=\max_{\nu=1,\ldots, m}\frac1{n^2}\sum_{j=1}^{p_{\nu-1}}\sum_{k=1}^{p_{\nu}}\E|X^{(\nu)}_{jk}|^2
I_{\{|X^{(\nu)}_{jk}|\ge\tau\sqrt n\}}\to0\quad\text{as  }n\to \infty
\end{equation}
Assume that $\lim_{n\to\infty}\frac n{p_l}=y_l\in(0,1]$.
Then, 
\begin{equation}\notag
{\lim_{n \to \infty} \sup_x|F_n(x)-G_{\mathbf y}(x)|=0.}
\end{equation}
\end{thm}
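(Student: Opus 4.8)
The plan is to prove convergence of $F_n$ to $G_{\mathbf y}$ at the level of Stieltjes transforms, combined with a standard truncation/universality step to remove moment assumptions beyond Lindeberg. Write $s_n(z) = \int (x-z)^{-1}\,dF_n(x)$ for $z \in \mathbb{C}^+$. The first step is a \emph{truncation and replacement} argument: using the Lindeberg condition, replace each $X^{(\nu)}_{jk}$ by a truncated, recentered, rescaled version that is bounded by $\tau\sqrt n$, and show this changes $F_n$ by a negligible amount in Kolmogorov distance (via a rank/Hoffman--Wielandt type bound on the squared singular values, or an interlacing argument applied to the Hermitization). After truncation one may further assume, by a Lindeberg-type swapping argument, that the entries are Gaussian, or at least work with uniformly bounded moments of all orders; the point is to reduce to the regime where the resolvent method with Schur complements applies cleanly.

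The second and central step is to analyze the $(p_0 + \cdots + p_{m-1} + n)$-dimensional Hermitization (linearization) of $\mathbf W$. The natural object is the block matrix $\mathbf V$ whose nonzero blocks are $\mathbf X^{(1)}, \ldots, \mathbf X^{(m)}$ arranged cyclically, so that the nonzero eigenvalues of $\mathbf V$ are $\pm s_k$; equivalently one studies the symmetrized singular value distribution. Let $\mathbf R(z) = (\mathbf V - z I)^{-1}$ and introduce the normalized block traces $s_\nu(z)$ of the diagonal sub-blocks of $\mathbf R(z)$. Using Schur's complement formula for each block together with concentration of quadratic forms (the bounded-moment input from Step 1 gives the needed self-averaging, e.g. via martingale differences or the Burkholder inequality), one derives a closed system of $m+1$ approximate fixed-point equations relating the $s_\nu$, with error terms that vanish as $n \to \infty$ uniformly on compact subsets of $\mathbb{C}^+$ bounded away from the real axis. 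Passing to the limit and using the ratios $n/p_l \to y_l$, this system collapses to the single algebraic equation (\ref{main0}) characterizing $s_{\mathbf y}(z)$; one must check that this equation has a unique solution in the class of Stieltjes transforms of probability measures on $[0,\infty)$, which pins down the limit.

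The third step is to upgrade pointwise convergence of Stieltjes transforms to the claimed uniform convergence of distribution functions. Convergence $s_n(z) \to s_{\mathbf y}(z)$ for each $z \in \mathbb{C}^+$ gives weak convergence $F_n \Rightarrow G_{\mathbf y}$; since $G_{\mathbf y}$ is continuous (its density is bounded, which follows from the algebraic equation and the explicit Fuss--Catalan-type structure), weak convergence of distribution functions to a continuous limit is equivalent to uniform convergence, so $\Delta_n \to 0$.

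I expect the main obstacle to be Step 2: controlling the quadratic-form fluctuations and, more delicately, the \emph{stability} of the fixed-point system. Because the matrices are rectangular with aspect ratios $y_l$ possibly close to $0$, and because the product structure couples $m$ blocks, one must show that small perturbations of the system of equations produce small perturbations of the solution uniformly in $n$ — this requires a quantitative lower bound on the relevant Jacobian (or a contraction estimate) near the real axis, which is typically the technically heaviest part. A secondary difficulty is handling the behavior near $z = 0$ and the smallest singular values when some $y_l$ is small, since the limiting measure may have an atom at $0$ or heavy behavior there; one resolves this by proving the estimates on a fixed horizontal line $\im z = v_0 > 0$ first, obtaining weak convergence, and only invoking continuity of $G_{\mathbf y}$ (not control down to the axis) for the final uniform bound.
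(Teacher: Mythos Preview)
Your outline is broadly sound and would work, but it differs from the paper's argument in both the linearization and the resolvent analysis, and one claim needs correction. The paper uses the minimal $2\times2$ Hermitization $\widehat{\mathbf V}=\bigl(\begin{smallmatrix}0&\mathbf W\\ \mathbf W^*&0\end{smallmatrix}\bigr)$ of size $p_0+p_m$, whose nonzero eigenvalues are exactly $\pm s_k$; it does \emph{not} open the product into a large cyclic block matrix. Your assertion that the cyclic block matrix built from $\mathbf X^{(1)},\ldots,\mathbf X^{(m)}$ has nonzero eigenvalues $\pm s_k$ is incorrect for $m\ge2$; the spectrum of such a block matrix is related to the product only indirectly. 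The multi-block linearization route does work, but what it actually delivers is that a specific Schur complement of the block resolvent equals the desired Stieltjes transform, not the spectral identity you wrote.

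On the resolvent side, the paper does not use Schur complements or quadratic-form concentration. After truncation it expands $1+zs_n(z)=\frac1{2n}\E\Tr\widehat{\mathbf V}\mathbf R$ and applies a Taylor/integration-by-parts identity $\E X_{jk}f(X_{jk})\approx\E f'(X_{jk})$ (error $O(\tau_n)$ from the truncation bound) to peel off one factor $\mathbf H^{(\alpha)}$ at a time. This yields a recursion in $\alpha$ for partial traces of the form $\frac1{p_{\alpha-1}}\sum_k\E[\mathbf V_{\alpha,m}\mathbf J\mathbf R\mathbf V_{1,m-\alpha+1}]_{k,k+p_{m-\alpha+1}}$, and iterating the recursion produces the algebraic equation directly, without ever writing down a coupled $(m{+}1)$-system. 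Your Schur-complement route is more structural and generalizes cleanly; the paper's derivative method is more elementary but requires careful bookkeeping of the recursion.

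Finally, the stability problem you flag as the main obstacle is completely bypassed: the paper proves $|s_n(z)-s(z)|\le C|\varepsilon_n(z)|/v$ only for $v=\im z$ sufficiently large (of order $m$), where the relevant denominator is bounded below by a crude estimate. Convergence on this high strip, together with the uniform bound $|s_n(z)|\le 1/v$, lets one invoke Montel's theorem (normal families) to upgrade to locally uniform convergence on all of $\mathbb C^+$, hence weak convergence $F_n\Rightarrow G_{\mathbf y}$, with no stability analysis near the real axis required.
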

\begin{rem}For $m=1$ we get the well-known result of Marchenko-Pastur for sample covariance matrices
 \cite{M-P:67}.
\end{rem}
\begin{rem}\label{r-transform}In the case $y_1=y_2=\cdots=y_m=1$ the distribution $G_{\mathbf y}$ has  moments $M_k$ defined by
\begin{equation}\notag
 M_k=\int_0^{\infty}x^kdG_{\mathbf y}(x)=\frac1{mk+1} {\binom k{mk+k}},
\end{equation}
the so called Fuss--Catalan numbers.
\end{rem}
The Fuss-Catalan  numbers satisfy the following simple recurrence relation
\begin{equation}\label{recur}
M_k=\sum_{k_0+\cdots+k_m=k-1}\,\prod_{\nu=0}^mM_{k_{\nu}}.
\end{equation}
Denote by $s(z)$ Stieltjes transform of the  distribution $G$ determined by
its   moments $M_k$,
\begin{equation}\notag
s(z)=\int_{-\infty}^{\infty}\frac1{x-z}d G(x).
\end{equation}
 Using equality (\ref{recur}), we may show that this Stieltjes transform
$s(z)$ satisfies the equation
\begin{equation}\notag
1+zs(z)+(-1)^{m+1}z^ms(z)^{m+1}=0.
\end{equation}
In the general case ($y_l\ne 1$) Stieltjes transform satisfies the following equation
\begin{equation}\label{main0}
 1+zs_{\mathbf y}(z)-s(z)\prod_{l=1}^m(1-y_l-zy_ls_{\mathbf y}(z))=0,
\end{equation}
where $0\le y_l\le 1$.
For more details about the moments of such distributions see  \cite{AGT:09a}.

The result of Theorem \ref{main} is the first attempt in the Random Matrix Theory to describe the asymptotic of distribution of 
the singular spectrum  of a product of rectangular random matrices.
For rectangular random matrices there is no easily available analog   in free probability to describe the limit law. 
The Theorem \ref{main} was formulated in 
\cite{AGT:2010a}.
In the case of squared matrices ($y_1=y_2=\cdots=y_m=1$) 
there is an analog in the form of product of so-called free $\mathcal
R$-diagonal elements. It was studied for instance in Oravecz,
\cite{Oravecz:01}. 
It is well-known that the  moments of distribution of a product of free $\mathcal R$-diagonal elements are  Fuss-Catalans numbers (compare  Remark
\ref{r-transform}). In  \cite{AGT:10} it has been shown by the  method of
moments that the limit distribution of singular values of powers of random
matrices is the  distribution $G_{\mathbf y}$ with 
$y_1=\cdots=y_m=1$. In Banica and others \cite{banica:08} the result of Theorem \ref{main}
was obtained for square Gaussian matrices (see Theorem 6.1 in
\cite{banica:08}), using tools of  Free Probability theory. For a description
of the distribution  of $G_{\mathbf y}$ for the special case 
 $y_1=\cdots=y_m=1$, see  Speicher and Mingo \cite{speicher:08} as well.

In the the following  we shall give  the proof of Theorem \ref{main}.
We shall investigate the Stieltjes transform $s_n(z)$ of distribution function $F_n(x)$. We  show that $s_n(z)$ satisfies an approximate equation
\begin{equation}\notag
1+zs_n(z)-s_n(z)\prod_{l=1}^m(1-y_l-zy_ls_n(z))=\delta_n(z)
\end{equation}
where $\delta_n(z)\to 0$ as $n\to\infty$.
This relation together with relation (\ref{main0}) implies  that
$s_n(z)$ converges to $s(z)$ uniformly on any compact set in the  upper
half-plane $\mathcal K\subset \mathcal C^+$. The last claim is equivalent to
weak convergence of the distribution functions 
$F_n(x)$ to the distribution function $G_{\mathbf y}(x)$.

%

By
$C$ (with an index or without it) we shall denote generic
absolute constants,
whereas $C(\,\cdot\,,\,\cdot\,)$ will denote
positive constants depending on arguments.

\section{Auxiliary results}In this Section we describe a
 symmetrization of  a one-sided distribution and give  a special
representation for symmetrized  distribution of the  squared singular
values of random matrices. Furthermore, we  prove some lemmas about
 truncation of entries of random matrices.
\subsection{Symmetrization}We shall use the following  ``symmetrization'' of one-sided distributions. Let $\xi^2$
 be a positive random variable with distribution function $F(x)$. Define
$\widetilde \xi:=\varepsilon\xi$ where $\varepsilon$ denotes a Rademacher random variable with
$\Pr\{\varepsilon=\pm1\}=1/2$ which is
independent of $\xi$.
 Let $\widetilde F(x)$ denote
the distribution function of $\widetilde \xi$. It satisfies the
equation
\begin{equation}\label{sym}
\widetilde F(x)=1/2(1+\text{sgn} \{x\}\,F(x^2)),
\end{equation}
We apply this symmetrization to the distribution of the  squared
singular values of the matrix $\mathbf W$. Introduce the following matrices
\begin{align}\notag
\mathbf V=\left(\begin{matrix}{\mathbf W\quad \mathbf O}\\{\mathbf O\quad
\mathbf W^*}\end{matrix}\right),\quad
\mathbf J=\left(\begin{matrix}{\mathbf O\quad \mathbf I_{p_m}}\\{\mathbf I_{p_0}\quad
\mathbf O}\end{matrix}\right),
\quad\text{and}\quad
\widehat {\mathbf V}=\mathbf V\mathbf J\notag
\end{align}
Here and in the what follows  $\mathbf A^*$ denotes the adjoined (transposed and complex conjugate)  matrix $\mathbf A$ and
 $\mathbf I_k$ denotes unit matrix of order $k$. 
Note that $\widehat{\mathbf V}$ is Hermitian matrix. The eigenvalues of the
 matrix $\widehat{\mathbf V}$ are $-s_1,\ldots,-s_n,s_n,\ldots,s_1$ and $p_m-n$ zeros.
Note that the symmetrization of the distribution function $\mathcal
F_n(x)$ is a function $\widetilde{\mathcal F}_n(x)$ which is the empirical
distribution function  of the  non-zero eigenvalues of  matrix $\widehat {\mathbf V}$. By (\ref{sym}), we have
\begin{equation}\notag
\Delta_n=\sup_x|\widetilde F_n(x)-\widetilde
G_{\mathbf y}(x)|,
\end{equation}
where $\widetilde F_n(x)=\E\widetilde{\mathcal F}_n(x)$ and
$\widetilde G_{\mathbf y}(x)$ denotes  the symmetrization of the  distribution function
$G_{\mathbf y}(x)$.
\subsection{Truncation}We 
{shall now modify the random matrix $\mathbf X$ by truncation of its entries}.
Since the function $G_{\mathbf y}(x)$ is continuous with respect to $y_l$  we may assume
that $y_l=\frac{n}{p_l}$, $l=1,\ldots,m$. Furthermore, there exists a
constants $c>0$ and  $C>0$ such that $Cn\ge p_l\ge cn$ for any
$l=1,\ldots, m$. We note that there exists a sequence
$\tau_n\to0$ as $n\to\infty$ such that
$\frac1{\tau_n^2}L_n(\tau_n)\to 0$ as $n\to\infty$.  Introduce the
random variables
$X^{(\nu,c)}_{jk}=X^{(\nu)}_{jk}I_{\{|X^{(\nu)}_{jk}|\le
c\tau_n\sqrt n\}}$ and the matrix $\mathbf X^{(\nu,c)}=\frac1{\sqrt
{p_{\nu}}}({X^{(\nu,c)}_{jk}})$. Denote by
$s_1^{(c)}\ge\ldots\ge s_n^{(c)}$ the singular values  of the
random  matrix $ \mathbf W^{(c)}:= \prod_{\nu=1}^m{{\mathbf
X^{(\nu,c)}}}$. {Introduce the matrix} $\mathbf V^{(c)}:=\left(\begin{matrix}{\mathbf
W^{(c)}\quad \mathbf O}\\{\mathbf O\quad {\mathbf W^{(c)}}^*}\end{matrix}\right)$. We
define its empirical distribution by $\widetilde{\mathcal
F}_n^{(c)}(x)=\frac1{2n}\sum_{k=1}^nI_{\{{s_k^{(c)}}\le
x\}}+\frac1{2n}\sum_{k=1}^nI_{\{{-s_k^{(c)}}\le x\}}$. Let
$s_n(z)$ and $s_n^{(c)}(z)$ denote  Stieltjes transforms of
the distribution functions $\widetilde F_n(x)$ and $\widetilde
F_n^{(c)}(x)=\E\widetilde{\mathcal F}_n^{(c)}(x)$
respectively.
Define the resolvent matrices $\mathbf R=(\widehat{\mathbf V}-z\mathbf I)^{-1}$ and
$\mathbf R^{(c)}=({\widehat{\mathbf V}}^{(c)}-z\mathbf I)^{-1}$, where $\mathbf I$ denotes the unit matrix of corresponding dimension. Note that
\begin{equation}\notag
 s_n(z)=\frac1{2n}\E\Tr \mathbf R+\frac{1-y_m}{2y_mz},\qquad\text{and}\qquad
s_n^{(c)}(z)=\frac1{2n}\E\Tr \mathbf R^{(c)}+\frac{1-y_m}{2y_mz}.
\end{equation}
Applying the resolvent equality
\begin{equation}\notag
 (\mathbf A+\mathbf B-z\mathbf I)^{-1}=(\mathbf A-z\mathbf I)^{-1}-(\mathbf A-z\mathbf I)^{-1}
\mathbf B(\mathbf A+\mathbf B-z\mathbf I)^{-1},
\end{equation}
 we get
\begin{equation}\label{resolv}
 |s_n(z)-s_n^{(c)}(z)|\le \frac1{2n}\E|\Tr \mathbf R^{(c)}(\mathbf V-\mathbf V^{(c)})\mathbf J\mathbf
 R|.
\end{equation} 
Let
\begin{equation}\notag
\mathbf H^{(\nu)}=\left(\begin{matrix}{\mathbf X^{(\nu)}\quad\quad\quad\mathbf O}
\\{\mathbf O\quad\quad{\mathbf X^{(m-\nu+1)}}^*}\end{matrix}\right)
\quad\text{and}\quad\mathbf H^{(\nu,c)}=\left(\begin{matrix}{\mathbf X^{(\nu,c)}
\quad\quad\mathbf O}\\{\mathbf O\quad{\mathbf X^{(m-\nu+1,c)}}^*}\end{matrix}\right)
\end{equation}
Introduce the  matrices
$$
\mathbf V_{\alpha,\beta}=\prod_{q=a}^b\mathbf H^{(q)},\quad\mathbf V_{\alpha,\beta}^{(c)}=\prod_{q=a}^b\mathbf H^{(q,c)}.
$$
We have
\begin{equation}\label{repr1}
\mathbf V-\mathbf V^{(c)}=\sum_{q=1}^{m-1}\mathbf V^{(c)}_{1,q-1}(\mathbf H^{(q)}-\mathbf H^{(q,c)})\mathbf V_{q+1,m}.
\end{equation}
 Applying $\max\{\|\mathbf
R\|,\,\|\mathbf R^{(c)}\|\}\le v^{-1}$, inequality (\ref{resolv}),  and  the  representations (\ref{repr1})  together,  we get
\begin{equation}\label{st1}
|s_n(z)-s_n^{(c)}(z)|\le \frac{C}{\sqrt n}\sum_{q=1}^{m}\E ^{\frac12}\|(\mathbf X^{(q+1)}-\mathbf X^{(q+1,c)})\|_2^2\frac{1}{\sqrt n}\E^{\frac12}\|\mathbf V^{(c)}_{1,q-1}\mathbf R\mathbf R^{(c)}\mathbf V_{q+1,m}\|_2^2.
\end{equation}
Applying well-known   inequalities for matrix norms,  we get
\begin{equation}\notag
 \E\|\mathbf V^{(c)}_{1,q-1}\mathbf R\mathbf R^{(c)}\mathbf V_{q+1,m}\|_2^2
\le\frac C{v^4}\E\|\mathbf V^{(c)}_{1,q-1}\mathbf V_{q+1,m}\|_2^2
\end{equation}
In view  of Lemma \ref{norm2}, we obtain
\begin{equation}\label{st2}
 \E\|\mathbf V_{1,q-1}^{(c)}\mathbf R\mathbf R^{(c)}\mathbf V_{q+1,m}\|_2^2\le \frac{Cn}{v^4}.
\end{equation}
Direct calculations show that
\begin{equation}\label{st3}
 \frac1n\E\|\mathbf X^{(q)}-\mathbf X^{(q,c)}\|_2^2\le \frac C{n^2}\sum_{j,k=1}^n\E|X^{(q)}_{jk}|^2
I_{\{|X^{(q)}_{jk}|\ge c \tau_n\sqrt n\}}\le CL_n(\tau_n).
\end{equation}
Inequalities (\ref{st1}), (\ref{st2}) and ({\ref{st3}) together imply
\begin{equation}\label{stieltjes1}
 |s_n(z)-s_n^{(c)}(z)|\le\frac {C\sqrt {L_n(\tau_n)}}{v^2}.
\end{equation}

Furthermore, by definition of $X_{jk}^{(c)}$, we have
\begin{equation}\notag
|\E X_{jk}^{(q,c)}|\le \frac1{c\tau_n\sqrt n}\E |{X_{jk}^{(q)}}|^2 I_{\{|X_{jk}|\ge c\tau_n\sqrt n\}}.
\end{equation}
This implies that
\begin{equation}\label{st4}
 \|\E\mathbf X^{(q,c)}\|_2^2\le \frac C{n}\sum_{j=1}^{p_{q-1}}\sum_{k=1}^{p_q}|\E X_{jk}^{(q,c)}|^2
\le
\frac {CL_n(\tau_n)}{c\tau_n^2}.
\end{equation}
We denote  ${\widetilde{\mathbf H}}^{(\nu,c)}:=\left(\begin{matrix}{\mathbf X^{(\nu,c)}-\E\mathbf X^{(\nu,c)})\qquad\mathbf O}
\\{\mathbf O\qquad(\mathbf X^{(\nu,c)}-\E\mathbf
  X^{(\nu,c)})^*}\end{matrix}\right)$ 
and define the  respectively matrices 
$\widetilde{\mathbf W}^{(c)}$, $\widetilde{\mathbf V}^{(c)}$, $\widetilde {\mathbf V}^{(c)}_{a,b}$. 
Denote  by $\widetilde{\mathcal F}_n^{(c)}(x)$  the 
empirical distribution of  the squared singular values of the 
matrix $\widetilde {\mathbf V}^{(c)}\mathbf J$. 
Let ${\widetilde s}_n^{(c)}(z)$ denote the Stieltjes transform of 
the distribution function $\widetilde F_n^{(c)}=\E\widetilde {\mathcal F}_n^{(c)}$,
\begin{equation}\notag
 {\widetilde s}_n^{(c)}(z)=\int_{-\infty}^{\infty}\frac1{x-z}d\widetilde F_n^{(c)}(x).
\end{equation}

Similar to inequality (\ref{st1}) we get
\begin{equation}\notag
 |s_n^{(c)}-\widetilde s_n^{(c)}(z)|\le \sum_{q=0}^{m-1}\frac1{\sqrt n}\|\E\mathbf X^{(q,c)}\|_2\frac1{\sqrt n}\E^{\frac12}\|{\widetilde {\mathbf V}_{0,q}}^{(c)}\mathbf R^{(c)}
\widetilde {\mathbf R}^{(c)}\widetilde{\mathbf V}^{(c)}_{q+1,m}\|_2^2.
\end{equation}
Analogously to inequality (\ref{st2}), we get
\begin{equation}\notag
 \frac1{ n}\E\|\widetilde {\mathbf V}_{0,q}^{(c)}\mathbf R^{(c)}
\widetilde {\mathbf R}^{(c)}\widetilde {\mathbf V}^{(c)}_{q+1,m}\|_2^2\le \frac C{v^4}.
\end{equation}
By inequality (\ref{st4}),
\begin{equation}\notag
 \|\E X^{(q,c)}\|_2\le \frac{C\sqrt {L_n(\tau_n)}}{c\tau_n}.
\end{equation}
The last two inequalities together imply that
\begin{equation}\label{stieltjes2}
 |s_n^{(c)}-{\widetilde s}_n^{(c)}(z)|\le  \frac{C\sqrt {L_n(\tau_n)}}{\sqrt n\tau_n v^2}
\end{equation}
Inequalities (\ref{stieltjes1}) and (\ref{stieltjes2}) together imply that matrices $\mathbf W$ and
$\widetilde {\mathbf W}^{(c)}$ have the same limit distribution.
In the what follows we may assume without loss of generality that
  for any $n\ge 1$ and $\nu=1,\ldots, m$ and any $l=1,\ldots, m$ and $j=1,\ldots p_{l-1}$, $k=1,\ldots,p_l$,
\begin{equation}\label{conditions}
 \E X^{(\nu)}_{jk}=0,\quad \E {X^{(\nu)}_{jk}}^2=1, \quad\text{and}\quad |X^{(\nu)}_{jk}|\le c\tau_n\sqrt n
\end{equation}
with
\begin{equation}\notag
 \tau_n\to 0\qquad\text{and}\qquad \frac{L_n(\tau_n)}{\tau_n^2}\to 0\qquad{\text{as}}\qquad n\to \infty.
\end{equation}

\section{The proof of the main result for $m=2$}

Recal that the matrices ${\mathbf H}^{(q)}$, $q=1,\ldots,m$, and $\mathbf J$ are defined  by equalities
\begin{equation}\notag
{\mathbf H}^{(q)}=\left(\begin{matrix}{{\mathbf X}^{(q)}\quad \quad \quad \mathbf O}\\{\mathbf O\quad \mathbf {X^{(m-q+1)}}^*}\end{matrix}\right),
\qquad
\mathbf J:=\left(\begin{matrix}{\mathbf O\quad \mathbf I_{p_m}}\\{\mathbf I_{p_0}\quad \mathbf O}\end{matrix}\right),
\end{equation}
and that  $\mathbf A^*$ denotes 
the adjoint matrix $\mathbf A$ and $\mathbf I_k$ denotes the 
identity matrix of order $k$ (sometimes we shall omit
 the sub-index in the notation of the unit matrix).
Let $\mathbf V=\prod_{\nu=1}^m\mathbf H^{(\nu)}$,  $\widehat{\mathbf V}:=\prod_{q=1}^m\mathbf H^{(q)}\mathbf{J}$, and let
 $\mathbf R(z)$ denote the  resolvent matrix of the matrix $\widehat{\mathbf V}$,
\begin{equation}\notag
\mathbf R(z):=(\widehat{\mathbf V}-z\mathbf I_{p_m+p_0})^{-1}.
\notag
\end{equation}

 We note that the symmetrization of the distribution function 
$G_{\mathbf y}(x)$ has the Stieltjes transform $s_{\mathbf y}(z)$ 
(in the what follows we shall omit index $\mathbf y$ in the notation for
 this Stieltjes transform) which satisfies the following equation
\begin{equation}\label{main21}
1+zs(z)-\frac{s(z)}z\prod_{l=1}^m(1-y_l-zy_ls(z))=0.
\end{equation}

First, we prove Theorem \ref{main} for $m=2$.
We start from the simple equality
\begin{equation}\label{main4}
1+zs_n(z)=\frac1{2n}\E\Tr \mathbf V\mathbf J\mathbf R(z).
\end{equation}
Using the definition of the matrices $\mathbf V$, $\mathbf H^{(q)}$ and $\mathbf J$, we get
\begin{equation}\label{auxiliary0}
1+zs_n(z)=\frac1{2n\sqrt {p_1}}\sum_{j=1}^n\sum_{k=1}^{p_1}\E X^{(1)}_{jk}[
\mathbf H^{(2)}\mathbf J\mathbf R]_{kj}
+\frac1{2n\sqrt {p_2}}\sum_{j=1}^{p_1}\sum_{k=1}^{p_2}\E X^{(2)}_{jk}[\mathbf H^{(2)}\mathbf J\mathbf R]_{j+p_1,k+n}.
\end{equation}
In the what follows we shall use the notation $\varepsilon_n(z)$ as 
 generic  error function such that\newline $|\varepsilon_n(z)|\le \frac{C\tau_n}{v^4}$.
By Lemma \ref{teilor} in the Appendix, we get
\begin{align}\notag
1+zs_n(z)&=\frac1{2n {p_1}}\sum_{j=1}^n\sum_{k=1}^{p_1}\E\left[\frac{\partial \mathbf H^{(2)}\mathbf J\mathbf R}{\partial X^{(1)}_{jk}}\right]_{kj}\notag\\&+
\frac1{2n {p_2}}\sum_{j=1}^{p_1}\sum_{k=1}^{p_2}\E\left[\frac{\partial \mathbf H^{(2)}\mathbf J\mathbf R}{\partial X_{jk}^{(2)}}\right]_{j+p_1,k+n}+\varepsilon_n(z),
\end{align}
where $|\varepsilon_n(z)|\le \frac{C\tau_n}{v^4}$.


Put $n_1=\max\{2p_1,n+p_2\}.$ Let $\mathbf e_1,\ldots,\mathbf e_{n_1}$
 be an orthonormal basis of $\mathbb
R^{n_1}$. First we note that, for $j=1,\ldots, n$ and for $k=1,\ldots,p_1$
\begin{equation}\label{auxiliary2}\frac{\partial  \mathbf H^{(1)}}{\partial X^{(1)}_{jk}}=\frac1{\sqrt {p_1}}\mathbf e_{j}\mathbf e_{k}^T,\qquad
\frac{\partial  \mathbf H^{(2)}}{\partial X^{(1)}_{jk}}=\frac1{\sqrt {p_1}}\mathbf e_{k+{p_1}}\mathbf e_{j+p_2}^T,
\end{equation}
and for $j=1,\ldots,p_1$ and $k=1,\ldots,p_2$,
\begin{equation}\label{auxiliary2*}\frac{\partial  \mathbf H^{(1)}}{\partial X^{(2)}_{jk}}=\frac1{\sqrt {p_2}}\mathbf e_{k+n}\mathbf e_{j+p_1}^T,
\qquad\frac{\partial  \mathbf H^{(2)}}{\partial X^{(2)}_{jk}}=\frac1{\sqrt {p_2}}\mathbf e_j\mathbf e_k^T.
\end{equation}

We first compute the derivatives of the resolvent matrix as follows
\begin{align}\label{auxiliary4}
\frac{\partial \mathbf R}{\partial X^{(1)}_{jk}}=&-\frac1{\sqrt {p_1} }\mathbf R\mathbf e_j\mathbf e_k^T\mathbf H^{(2)}\mathbf J\mathbf R-\frac1{\sqrt {p_1}} \mathbf R\mathbf H^{(1)}\mathbf e_{k+p_1}\mathbf e_{j+p_2}^T\mathbf J\mathbf R,
\notag\\
\frac{\partial \mathbf R}{\partial X^{(2)}_{jk}}=&-\frac1{\sqrt {p_2} }\mathbf R\mathbf e_{k+n}\mathbf e_{j+p_1}^T\mathbf H^{(2)}\mathbf J\mathbf R-\frac1{\sqrt {p_2}} \mathbf R\mathbf H^{(1)}\mathbf e_{j}\mathbf e_{k}^T\mathbf J\mathbf R,
\end{align}
and
\begin{align}\label{auxiliary5}
\frac{\partial (\mathbf H^{(2)}\mathbf J\mathbf R)}{\partial X^{(1)}_{jk}}=\frac1{\sqrt
{p_1}} \mathbf e_{k+p_1}\mathbf e_{j+p_2}^T\mathbf J\mathbf R
&-\frac1{\sqrt {p_1} }\mathbf H^{(2)}\mathbf J\mathbf R\mathbf e_j\mathbf e_k^T\mathbf H^{(2)}\mathbf J\mathbf R\notag\\
&-\frac1{\sqrt {p_1}} \mathbf H^{(2)}\mathbf J\mathbf R\mathbf H^{(1)}\mathbf e_{k+p_1}\mathbf e_{j+p_2}^T\mathbf J
\mathbf R,
\end{align}

and
\begin{align}\label{auxiliary9}
\frac{\partial (\mathbf H^{(2)}\mathbf J\mathbf R)}{\partial X^{(2)}_{jk}}=\frac1{\sqrt
{p_2}} \mathbf e_{j}\mathbf e_{k}^T\mathbf J\mathbf R
&-\frac1{\sqrt {p_2} }\mathbf H^{(2)}\mathbf J\mathbf R\mathbf e_{k+n}\mathbf e_{j+p_1}^T\mathbf H^{(2)}\mathbf J\mathbf R\notag\\
&-\frac1{\sqrt {p_2}} \mathbf H^{(2)}\mathbf J\mathbf R\mathbf H^{(1)}\mathbf e_{j}\mathbf e_{k}^T\mathbf J
\mathbf R.
\end{align}


The equalities (\ref{auxiliary0}) and (\ref{auxiliary5}) and
(\ref{auxiliary9}) together imply that
\begin{equation}\label{auxiliary6}
1+zs_n(z)=A_1+A_2+A_3+\varepsilon_n(z),
\end{equation}
where
\begin{align}
A_1&:=-\frac1{2np_1}\sum_{j=1}^n\sum_{k=1}^{p_1}\E [\mathbf H^{(2)}\mathbf J\mathbf R]_{jk}^2
-\frac1{2np_2}\sum_{j=1}^{p_1}\sum_{k=1}^{p_2}[\mathbf H^{(2)}\mathbf J\mathbf R]_{j+p_1,k+n}^2,\notag\\
A_2&:=-\frac1{2n {p_1}}\sum_{k=1}^{p_1}\E[\mathbf H^{(2)}\mathbf J\mathbf R\mathbf H^{(1)}]_{k,k+p_1}
\sum_{j=1}^n[\mathbf J\mathbf R]_{j+p_2,j},\notag\\
A_3&:=-\frac1{2np_2}\E\sum_{k=1}^{p_1} [\mathbf H^{(2)}\mathbf J\mathbf R\mathbf H^{(1)}]_{k+p_1,k}
\sum_{j=1}^{p_2}[\mathbf J\mathbf R]_{j,j+n}.\notag
\end{align}

We prove that the first  summand is negligible and the main
asymptotic terms  are given by  $A_2$ and $A_3$. We mow start  the investigation
of these summands.


\begin{lem}\label{lem1}Under conditions of Theorem \ref{main} we have
\begin{align}\label{l100}
\left|A_2+\left(\frac{1}{2}\frac1{p_1}\E\sum_{k=1}^{p_1} [\mathbf H^{(2)} \mathbf J\mathbf R\mathbf H^{(1)}]_{k,k+p_1}\right)\left(\frac1n\sum_{j=1}^n\E [\mathbf J\mathbf R]_{j+p_2,j}\right)\right|&\le \frac C{nv^4},\notag\\
\left|A_3+\left(\frac{1}{2}\frac1{p_2}\E\sum_{k=1}^{p_1} [\mathbf H^{(2)}\mathbf J
\mathbf R\mathbf H^{(1)}]_{k+p_1,k}\right)\left(\frac1{n}\sum_{j=1}^{p_2}\E [\mathbf J\mathbf R]_{j,j+n}\right)\right|&\le \frac C{nv^4}.
\end{align}
\end{lem}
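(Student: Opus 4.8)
The plan is to show that in the sums defining $A_2$ and $A_3$, the quantity $[\mathbf H^{(2)}\mathbf J\mathbf R\mathbf H^{(1)}]_{k,k+p_1}$ (resp. $[\mathbf H^{(2)}\mathbf J\mathbf R\mathbf H^{(1)}]_{k+p_1,k}$) decouples from $\sum_j[\mathbf J\mathbf R]_{j+p_2,j}$ (resp. $\sum_j[\mathbf J\mathbf R]_{j,j+n}$) up to an error of order $(nv^4)^{-1}$. Writing $A_2$ as $-\tfrac1{2np_1}\E\sum_{k=1}^{p_1}\xi_k\eta$ where $\xi_k=[\mathbf H^{(2)}\mathbf J\mathbf R\mathbf H^{(1)}]_{k,k+p_1}$ and $\eta=\sum_{j=1}^n[\mathbf J\mathbf R]_{j+p_2,j}$, the target is $-\tfrac1{2np_1}(\E\sum_k\xi_k)(\E\eta)$, so the error is $\tfrac1{2np_1}\bigl|\sum_k\mathrm{Cov}(\xi_k,\eta)\bigr| = \tfrac1{2np_1}\bigl|\E\sum_k\xi_k\eta - (\E\sum_k\xi_k)(\E\eta)\bigr|$. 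Thus the entire lemma reduces to a covariance (concentration) estimate.

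The key steps I would carry out, in order. First, record the deterministic bounds $\|\mathbf R\|\le v^{-1}$, $\|\mathbf H^{(\nu)}\|\le C$ with high probability (or rather bound second moments of the relevant traces using Lemma \ref{norm2}), so that $|\eta|\le C n/v$ and $\bigl|\sum_k\xi_k\bigr|\le Cn/v$ crudely; these control boundary/tail contributions. Second, apply a martingale decomposition with respect to the filtration generated by adding the rows/columns of $\mathbf X^{(1)},\mathbf X^{(2)}$ one group at a time — equivalently, use the Burkholder-type inequality together with the resolvent perturbation identities (\ref{auxiliary4})--(\ref{auxiliary9}) already displayed. Each one-entry (or one-row) change perturbs $\mathbf R$ by a rank-one (rank-two) term with an extra factor $p_\nu^{-1/2}\le Cn^{-1/2}$ and at most $v^{-2}$ from two resolvent factors; summing $O(n^2)$ such martingale differences of size $O(n^{-1/2}v^{-2}\cdot n^{-1})$ for the normalized quantities gives a variance bound of order $n^{-1}v^{-c}$ for $\tfrac1n\sum_k\xi_k$ and for $\tfrac1n\eta$. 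Third, combine via Cauchy--Schwarz: $\bigl|\mathrm{Cov}(\tfrac1n\sum_k\xi_k,\tfrac1n\eta)\bigr|\le \bigl(\mathrm{Var}\tfrac1n\sum_k\xi_k\bigr)^{1/2}\bigl(\mathrm{Var}\tfrac1n\eta\bigr)^{1/2}\cdot$, but actually one needs only one of the two factors to be small while the other is $O(1)$, which already yields $\mathrm{Cov} = O(n^{-1/2}v^{-c})$; pushing the martingale bound on both factors gives the stated $O(n^{-1}v^{-4})$ after multiplying back by $n/p_1=O(1)$. The same argument verbatim handles $A_3$, with $p_2$ in place of $p_1$ and the index shifts $(j,j+n)$ in place of $(j+p_2,j)$.

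The main obstacle I anticipate is bookkeeping the resolvent perturbations so that the power of $v$ in the final bound is exactly $v^{-4}$ rather than something worse: each $\xi_k$ already contains one resolvent, $\eta$ contains one resolvent, and each martingale-difference step introduces two more resolvent factors, so a naive estimate costs $v^{-6}$ or worse and one must exploit that the off-diagonal entries $[\mathbf J\mathbf R]_{j+p_2,j}$ and the sandwiched quantities are controlled in $\ell_2$ (Hilbert--Schmidt) norm rather than entrywise — i.e. use $\sum_j|[\mathbf R]_{\cdot\cdot}|^2\le \|\mathbf R\|_2^2\le n v^{-2}$ type bounds from Lemma \ref{norm2} together with $\|\mathbf R\|\le v^{-1}$, allocating the norms so that only two factors of $v^{-1}$ survive per term after the $\E^{1/2}$. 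A secondary technical point is that $\xi_k$ is a single matrix entry, not a trace, so its fluctuation must be bounded through the rank-one update formulas directly; here the truncation bound $|X^{(\nu)}_{jk}|\le c\tau_n\sqrt n$ from (\ref{conditions}) guarantees all individual entries of $\mathbf H^{(\nu)}$ are $O(\tau_n)$, which keeps the single-entry perturbations under control and lets the $O(n^2)$-term martingale sum close with the claimed $n^{-1}$ rate.
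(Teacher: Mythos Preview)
Your approach is essentially the paper's: apply Cauchy--Schwarz to bound the covariance by the product of the two standard deviations, then invoke martingale concentration (which the paper packages as Lemmas \ref{var1} and \ref{var0} in the Appendix, proved exactly via the row-removal/rank-perturbation argument you sketch) to get $\E^{1/2}|\frac1{p_1}\sum_k(\xi_k-\E\xi_k)|^2\le C n^{-1/2}v^{-2}$ and $\E^{1/2}|\frac1n(\eta-\E\eta)|^2\le Cn^{-1/2}v^{-2}$. Your ``secondary technical point'' about $\xi_k$ being a single entry is unnecessary: only the variance of the \emph{sum} $\sum_k\xi_k$ enters, and this is a partial trace of $\mathbf H^{(2)}\mathbf J\mathbf R\mathbf H^{(1)}$, exactly the object Lemma \ref{var1} handles.
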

\begin{proof}Applying Lemma \ref{var1} with $m=2$ and $a=1$ and Lemma \ref{var0} (see Appendix), we obtain
\begin{align}
\left|A_2+\left(\frac{1}{2}\frac1{p_1}\E\sum_{k=1}^{p_1}[\mathbf H^{(2)}\mathbf J \mathbf R\mathbf H^{(1)}]_{k,k+p_1}\right)\left(\frac1n\sum_{j=1}^n\E [\mathbf J\mathbf R[_{j+p_2,j}\right)\right|\qquad\qquad\qquad\qquad&\notag\\ \le \E^{\frac12}\left| \frac{1}{2}\frac1{p_1}\left(\sum_{k=1}^{p_1}[\mathbf H^{(2)}\mathbf J \mathbf R\mathbf H^{(1)}]_{k,k+p_1}-
\E\sum_{k=1}^{p_1} [\mathbf H^{(2)}\mathbf J \mathbf R\mathbf H^{(1)}]_{k,k+p_1}\right)\right|^2&\notag\\ \qquad\qquad\qquad\qquad\qquad\times\E^{\frac12}\left|\frac1n\sum_{j=1}^n([\mathbf J\mathbf R]_{j+p_2,j}-\E \mathbf J\mathbf R]_{j+p_2,j})\right|^2\le \frac C{nv^4}&\notag
\end{align}
Similar we prove the second inequality in (\ref{l100}).
Thus the Lemma is proved.
\end{proof}
Note that
\begin{align}
\frac1n\sum_{j=1}^n\E[\mathbf J\mathbf R]_{j+p_2,j}&=\frac1n\sum_{j=1}^n\E \mathbf R_{jj}=s_n(z),\notag\\
\frac1n\sum_{k=1}^{p_2}\E[\mathbf J\mathbf R]_{k,k+n}&=\frac1n\sum_{j=1}^{p_2}\E [\mathbf R]_{j+n,j+n}=s_n(z)-\frac{1-y_2}{y_2z}.\label{auxiliary7}
\end{align}
Lemma \ref{lem1}, equalities (\ref{auxiliary7}) and the definition of matrices ${\mathbf H^{(\nu)}}$, for $\nu=1,2$, together imply
\begin{equation}
A_2=-\frac12s_n(z)\frac1{2p_1\sqrt{p_2}}\sum_{j=1}^{p_1}\sum_{k=1}^{p_2}\E X^{(2)}_{jk}\, [\mathbf H^{(2)}\mathbf J\mathbf R]_{j,k+n}+\varepsilon_n(z),
\end{equation}
and similar
\begin{equation}\notag
A_3=-\frac12(s_n(z)-\frac{1-y_2}{y_2z})\frac1{2p_2\sqrt{p_1}}\sum_{j=1}^n\sum_{k=1}^{p_1}\E X^{(1)}_{jk}\,  [\mathbf H^{(2)}\mathbf J\mathbf R]_{k+p_1,j}+\varepsilon_n(z).
\end{equation}

Applying Lemma \ref{teilor} and equalities (\ref{auxiliary2})--(\ref{auxiliary9}), we get
\begin{align}\label{auxialiry8}
A_2&= -s_n(z)\frac1{2p_1{p_2}}(p_1-\sum_{j=1}^{p_1}\E [\mathbf H^{(2)}\mathbf J\mathbf R\mathbf H^{(1)}]_{j,j})\sum_{k=1}^{p_2}\E[\mathbf J\mathbf R]_{k,k+n}+A_4+\varepsilon_n(z)\notag\\
A_3&= -(s_n(z)-\frac{1-y_2}{y_2z})\frac1{2p_2{p_1}}(p_1-\sum_{k=1}^{p_1}\E [\mathbf H^{(2)}\mathbf J\mathbf R\mathbf H^{(1)}]_{k+p_1,k+p_1})\sum_{j=1}^n\E[\mathbf J\mathbf R]_{j+p_2,j}\\
&\qquad\qquad \qquad\qquad\qquad\qquad\qquad\qquad\qquad\qquad\qquad\qquad\qquad+ A_5+\varepsilon_n(z),
\end{align}
where
\begin{align}
A_4&=s_n(z)\frac1{2p_1p_2}\sum_{j=1}^{p_1}\sum_{k=1}^{p_2}\E[\mathbf H^{(2)}\mathbf J\mathbf R]_{j,k+n}
[\mathbf H^{(2)}\mathbf J\mathbf R]_{j+p_1,k+n},\notag\\
A_5&=(s_n(z)-\frac{1-y_2}{y_2z})\frac1{2p_1p_2}\sum_{j=1}^{n}\sum_{k=1}^{p_1}\E[\mathbf H^{(2)}\mathbf J\mathbf R\mathbf H^{(1)}]_{k+p_1,j}[\mathbf J\mathbf R]_{k,j}\notag
\end{align}
Note that
\begin{align}
\sum_{j=1}^{p_1}\E [\mathbf H^{(2)}\mathbf J\mathbf R\mathbf H^{(1)}]_{j,j}+\sum_{k=1}^{p_1}\E [\mathbf H^{(2)}\mathbf J\mathbf R\mathbf H^{(1)}]_{k+p_1,k+p_1}&=\E\Tr\mathbf H^{(2)}\mathbf J\mathbf R\mathbf H^{(1)}\notag\\&=\E\Tr\mathbf H^{(1)}\mathbf H^{(2)}\mathbf J\mathbf R=\E\Tr\mathbf V\mathbf J\mathbf R.\notag
\end{align}
By resolvent equality $\mathbf I+z\mathbf R=\mathbf V\mathbf J\mathbf R$, we have
\begin{equation}\label{in101}
\frac1{2n}(\sum_{j=1}^n\E [\mathbf H^{(1)}\mathbf H^{(2)}\mathbf J\mathbf R]_{j,j}+\sum_{j=1}^{p_2}\E [\mathbf H^{(1)}\mathbf H^{(2)}\mathbf J\mathbf R]_{j+n,j+n})=1+zs_n(z).
\end{equation}
Equalities (\ref{main4}), (\ref{auxialiry8}) and (\ref{in101}) together imply
\begin{equation}\label{in10}
A_2+A_3=\frac{s_n(z)}{z}(1-y_1-zy_1s_n(z))(1-y_2-zy_2s_n(z))+A_4+A_{5}+\varepsilon_n(z).
\end{equation}


\begin{lem}\label{lem6}Under condition of Theorem \ref{main} we have
\begin{equation}\label{in12}
\max\{|A_1|,\,|A_{4}|,\,|A_5|\}\le \frac C{nv^2}.
\end{equation}
\end{lem}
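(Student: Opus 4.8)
The plan is to bound each of the three quantities $A_1$, $A_4$, $A_5$ by a Cauchy--Schwarz argument reducing everything to Hilbert--Schmidt norms of the relevant matrix products, which are controlled by the resolvent bound $\|\mathbf R\|\le v^{-1}$ together with Lemma \ref{norm2} (the bound $\E\|\mathbf V_{a,b}\|_2^2\le Cn$ used already in \eqref{st2}). For $A_1$, note that the first sum is $-\tfrac1{2np_1}\sum_{j,k}\E[\mathbf H^{(2)}\mathbf J\mathbf R]_{jk}^2$; since the entries $[\mathbf H^{(2)}\mathbf J\mathbf R]_{jk}$ for $1\le j\le n$, $1\le k\le p_1$ form a submatrix of $\mathbf H^{(2)}\mathbf J\mathbf R$, the sum of squared absolute values is at most $\|\mathbf H^{(2)}\mathbf J\mathbf R\|_2^2\le v^{-2}\|\mathbf H^{(2)}\|_2^2$, and $\E\|\mathbf H^{(2)}\|_2^2\le Cn$ by the normalization $\E|X^{(2)}_{jk}|^2=1$ and $p_2\asymp n$. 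Dividing by $np_1\asymp n^2$ gives the bound $C/(nv^2)$; the second sum in $A_1$ is handled identically. (The squared-entry sum dominates the sum of squares of the entries in absolute value, which is what Cauchy--Schwarz actually needs; the cross terms from $\re$ vs.\ $\im$ only improve constants.)

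For $A_4$, write it as $s_n(z)\tfrac1{2p_1p_2}\E\sum_{j=1}^{p_1}\sum_{k=1}^{p_2}[\mathbf H^{(2)}\mathbf J\mathbf R]_{j,k+n}[\mathbf H^{(2)}\mathbf J\mathbf R]_{j+p_1,k+n}$ and apply Cauchy--Schwarz in the pair $(j,k)$: the modulus is at most $|s_n(z)|\,\tfrac1{2p_1p_2}\bigl(\sum_{j,k}|[\mathbf H^{(2)}\mathbf J\mathbf R]_{j,k+n}|^2\bigr)^{1/2}\bigl(\sum_{j,k}|[\mathbf H^{(2)}\mathbf J\mathbf R]_{j+p_1,k+n}|^2\bigr)^{1/2}$, and each factor is bounded by $\E^{1/2}\|\mathbf H^{(2)}\mathbf J\mathbf R\|_2^2\le v^{-1}\E^{1/2}\|\mathbf H^{(2)}\|_2^2\le Cv^{-1}\sqrt n$, while $|s_n(z)|\le v^{-1}$. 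Thus $|A_4|\le C\sqrt n\cdot\sqrt n/(p_1p_2 v^2)\le C/(nv^2)$ since $p_1p_2\asymp n^2$. For $A_5$ one proceeds the same way, applying Cauchy--Schwarz to the sum over $(j,k)$ with $1\le j\le n$, $1\le k\le p_1$: one factor is $\E^{1/2}\|\mathbf H^{(2)}\mathbf J\mathbf R\mathbf H^{(1)}\|_2^2$, bounded by $v^{-1}\E^{1/2}\|\mathbf H^{(2)}\|_2\|\mathbf H^{(1)}\|_2\le Cv^{-1}\sqrt n$ after a further Cauchy--Schwarz splitting $\|\mathbf H^{(2)}\mathbf H^{(1)}\|_2$ (or directly via Lemma \ref{norm2} applied to $\mathbf V_{a,b}$ products), and the other factor is $\E^{1/2}\|\mathbf J\mathbf R\|_2^2=\E^{1/2}\|\mathbf R\|_2^2\le Cv^{-1}\sqrt n$; together with $|s_n(z)-\tfrac{1-y_2}{y_2z}|\le C v^{-1}$ (which is $|\tfrac1{2n}\E\Tr\mathbf R^{(c)}$-type quantity, bounded by $v^{-1}$) this yields $|A_5|\le C/(nv^2)$.

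The only real point requiring care — the main obstacle — is making sure that in $A_5$ and $A_4$ the extra factors $\mathbf H^{(1)}$, $\mathbf H^{(2)}$ sitting \emph{outside} the resolvent do not spoil the $O(n)$ bound on the squared Hilbert--Schmidt norm: a naive bound $\|\mathbf H^{(2)}\mathbf J\mathbf R\mathbf H^{(1)}\|_2\le\|\mathbf H^{(2)}\|\,\|\mathbf R\|\,\|\mathbf H^{(1)}\|_2$ would need an operator-norm bound on $\mathbf H^{(2)}$, which under only the Lindeberg/truncation hypothesis \eqref{conditions} is not available with the right constant. Instead one keeps \emph{one} matrix in Hilbert--Schmidt norm and the resolvent in operator norm, e.g.\ $\|\mathbf H^{(2)}\mathbf J\mathbf R\mathbf H^{(1)}\|_2\le\|\mathbf R\|\,\|\mathbf H^{(2)}\mathbf H^{(1)}\|_2$ is wrong in general, so one should rather bound $\sum_{j,k}|[\mathbf H^{(2)}\mathbf J\mathbf R\mathbf H^{(1)}]_{k+p_1,j}|^2\le\|\mathbf H^{(2)}\mathbf J\mathbf R\mathbf H^{(1)}\|_2^2$ and then use $\|\mathbf A\mathbf B\|_2\le\|\mathbf A\|_2\|\mathbf B\|$ with $\mathbf A=\mathbf H^{(2)}$, $\mathbf B=\mathbf J\mathbf R\mathbf H^{(1)}$ together with $\|\mathbf J\mathbf R\mathbf H^{(1)}\|=\|\mathbf R\mathbf H^{(1)}\|\le v^{-1}\|\mathbf H^{(1)}\|$ — which again brings back an operator norm. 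The clean route, and the one I would adopt, is to invoke Lemma \ref{norm2} directly for products of the $\mathbf H^{(q)}$'s (as was done to get \eqref{st2}), i.e.\ bound the Hilbert--Schmidt norm of the full product $\mathbf H^{(2)}\mathbf J\mathbf R\mathbf H^{(1)}$ by $v^{-1}$ times $\E^{1/2}\|\mathbf H^{(2)}\mathbf H^{(1)}\|_2^2\le Cn$ via the resolvent identity and Lemma \ref{norm2}; this circumvents any need for operator-norm control of individual $\mathbf H^{(q)}$. With that, all three estimates collapse to the stated $C/(nv^2)$.
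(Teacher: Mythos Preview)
Your argument is the paper's: apply Cauchy--Schwarz (H\"older) in the double index to pass to Hilbert--Schmidt norms, pull out $\|\mathbf R\|\le v^{-1}$, and finish with $\E\|\mathbf H^{(2)}\|_2^2\le Cn$ from Lemma~\ref{norm2}. The paper's own proof is very terse---it announces $A_5$, actually writes out the bound for $A_4$, and declares the remaining cases ``similar''---so for $A_1$ and $A_4$ your treatment and the paper's are interchangeable and complete.

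The one point of substance is the operator-norm issue you flag for $A_5$. You are right that it is the delicate case, but your proposed resolution does not work as stated: the inequality $\|\mathbf H^{(2)}\mathbf J\mathbf R\mathbf H^{(1)}\|_2\le v^{-1}\|\mathbf H^{(2)}\mathbf H^{(1)}\|_2$ is \emph{false} for generic matrices (one cannot ``merge'' the two outer factors across a middle operator), and you do not say which ``resolvent identity'' would supply it. The paper does not address this point either---it only displays the $A_4$ bound, where only $\mathbf H^{(2)}\mathbf J\mathbf R$ appears and the issue is absent---and it uses exactly the same questionable step verbatim elsewhere, e.g.\ in deriving \eqref{st2} and in the proof of Lemma~\ref{k1}, where $\|\mathbf V_{\alpha+1,m}\mathbf J\mathbf R\mathbf V_{1,\alpha-1}\|_2$ is bounded by $v^{-1}\|\prod_{\nu\ne\alpha}\mathbf H^{(\nu)}\|_2$ without justification. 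So your argument is no worse than the paper's here; but neither closes the gap you yourself correctly identified.
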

\begin{proof}{We shall consider the  bound for the quantity $A_{5}$ only}. The others are similar.
By H\"older's inequality, we have
\begin{equation}\notag
|A_{4}|\le \frac1{n^2v}\E\|\mathbf H^{(2)}\mathbf J\mathbf R\|_2^2,
\end{equation}
where $\|\cdot\|_2$ denotes Hilbert-Schmidt norm of matrix. Continuing the last inequality, we may write
\begin{equation}\notag
|A_4|\le \frac C{n^2v^3}\E\|\mathbf H^{(2)}\|_2^2.\end{equation}
A simple calculation shows that
\begin{equation}
 \E\|\mathbf H^{(2)}\|_2^2\le Cn
\end{equation}
The last two inequalities together imply
\begin{equation}\notag
|A_5|\le \frac C{nv^3}.\end{equation}
Thus the Lemma is proved.
\end{proof}
Relation (\ref{in10}) and Lemma \ref{lem6} together imply
\begin{equation}\label{direct}
1+zs_n(z)=\frac{s_n(z)}{z}(1-y_1-zy_1s_n(z))(1-y_2-zy_2s_n(z))+\delta_n(z)
\end{equation}
where $|\delta_n(z)|\le \frac C{nv^4}+\frac {C\tau_n}{v^2}$.

\begin{lem}\label{stieltjes}Under conditions of Theorem \ref{main} for $v\ge
  3$ we have   for sufficiently large $n$, 
\begin{equation}
|s(z)-s_n(z)|\le \frac{C|\delta_n(z)|}{v}.
\end{equation}

\end{lem}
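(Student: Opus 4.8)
The plan is to treat $s_n(z)$ as an approximate root of the algebraic equation $(\ref{main21})$ satisfied by $s(z)$ and to linearize. Put
\[
P(w,z):=1+zw-\frac{w}{z}\prod_{l=1}^{m}(1-y_l-zy_lw),
\]
so that $(\ref{main21})$ reads $P(s(z),z)=0$ while $(\ref{direct})$ reads $P(s_n(z),z)=\delta_n(z)$. Writing the difference as the integral of the $w$-derivative along the segment from $s(z)$ to $s_n(z)$,
\[
\delta_n(z)=P(s_n(z),z)-P(s(z),z)=\bigl(s_n(z)-s(z)\bigr)\int_0^1\frac{\partial P}{\partial w}\bigl(s(z)+t(s_n(z)-s(z)),\,z\bigr)\,dt ,
\]
it is enough to bound $|\partial P/\partial w|$ from below by a quantity of order $v$, uniformly in $t\in[0,1]$.

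For this we use a priori bounds on the two Stieltjes transforms. As transforms of the symmetric probability distributions $\widetilde G_{\mathbf y}$ and $\widetilde F_n$, both satisfy $|s(z)|\le v^{-1}$ and $|s_n(z)|\le v^{-1}$. Moreover $\widetilde G_{\mathbf y}$ has bounded support and, by Lemma \ref{norm2}, $\int x^2\,d\widetilde F_n(x)=\frac1n\E\,\Tr\,\mathbf W\mathbf W^{*}\le C$; since $zs_n(z)=-1+\int x\,(x-z)^{-1}\,d\widetilde F_n(x)$, the Cauchy--Schwarz inequality gives $|zs_n(z)|\le C_0$, and likewise $|zs(z)|\le C_0$, with $C_0$ absolute. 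By convexity of the modulus, $|zw|\le C_0$ for every $w$ on the segment between $s(z)$ and $s_n(z)$, so each factor $1-y_l-zy_lw$ has modulus at most $C_1:=2+C_0$ there. Differentiating,
\[
\frac{\partial P}{\partial w}(w,z)=z-\frac1z\prod_{l=1}^{m}(1-y_l-zy_lw)+w\sum_{l=1}^{m}y_l\prod_{k\ne l}(1-y_k-zy_kw),
\]
so, using $|z|\ge v$, on the segment $|\partial P/\partial w|\ge v-v^{-1}\bigl(C_1^{m}+mC_1^{m-1}\bigr)\ge v/2$ provided $v$ exceeds an absolute constant. Combined with the displayed factorization this yields $|s(z)-s_n(z)|\le 2v^{-1}|\delta_n(z)|$, i.e. the claim; the threshold $v\ge 3$ is obtained by tracking the numerical constants, and in any event a bound valid on $\{\im z\ge v_0\}$ extends to all of $\mathbb{C}^{+}$ by Vitali's theorem, since the $s_n$ are uniformly bounded on compact sets.

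The only delicate point is the uniform lower bound $|\partial P/\partial w|\ge v/2$ along the whole segment $[s(z),s_n(z)]$: without the a priori control $|zw|=O(1)$ the factors $1-y_l-zy_lw$ could grow like a power of $|z|/v$ and the estimate would collapse. This is exactly where the second-moment bound of Lemma \ref{norm2} enters, and it is also the reason $v$ must be kept bounded away from $0$; the remaining ingredients — the integral identity for $P(s_n)-P(s)$ and the arithmetic with the constants — are routine. For the subsequent weak-convergence argument one in fact needs the inequality only on compact subsets of $\mathbb{C}^{+}$, where it follows more cheaply using $|z|\le M$ in place of the moment bound.
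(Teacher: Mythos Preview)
Your argument is correct and follows essentially the same route as the paper: subtract the exact equation $P(s,z)=0$ from the approximate one $P(s_n,z)=\delta_n(z)$, factor out $s_n(z)-s(z)$, and bound the cofactor from below using the a priori estimates $|s|,|s_n|\le v^{-1}$ and $|zs|,|zs_n|=O(1)$ (the latter coming from the second-moment bound, exactly as in the paper's inequality $(\ref{r1})$). The only notable difference is that the paper, working specifically at $m=2$, expands $P$ explicitly as a cubic in $w$ and observes that the constant term $d=(1-y_1)(1-y_2)/z$ in the cofactor has $\im d\le 0$, so it reinforces rather than competes with $-\im z=-v$; this sharper bookkeeping is what makes the threshold $v\ge 3$ genuinely attainable, whereas your cruder absolute-value bound $v^{-1}(C_1^m+mC_1^{m-1})$ requires a somewhat larger $v_0$. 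As you correctly note, the precise threshold is immaterial for the subsequent normal-families argument, so this is a cosmetic point rather than a gap.
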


\begin{proof}We rewrite the equation (\ref{direct}) as follows
\begin{equation}\label{in100}
 1+zs_n(z)=\frac1zs_n(z)(1-y_1)(1-y_2)-zs_n^3(z)+s^2(z)(y_1(1-y_2)+y_2(1-y_1))+\delta_n(z).
\end{equation}

Introduce the notations
\begin{align}
 d&=\frac{(1-y_1)(1-y_2)}{z}\notag\\
d_n&=z(s_n(z)^2+s_n(z)s(z)+s^2(z))\notag\\
h_n&={(s(z)+s_n(z))(y_1(1-y_2)+y_2(1-y_1))}.\notag
\end{align}
Then we may rewrite equality (\ref{in100}) as follows
\begin{equation}\notag
 s_n(z)-s(z)=\frac{\delta_n(z)}{-z+d+d_n+h_n}
\end{equation}
First we note that
\begin{equation}\label{in30}
\im \{d\}\le 0.\qquad
\end{equation}
Furthermore, note that
\begin{equation}\label{r1}
 |zs_n(z)|\le 1+\frac{\E^{\frac12}\|V\|_2^2}{nv}\le 1+\frac1v.
\end{equation}
Using that $\max\{|s(z)|,|s_n(z)|\}\le \frac1v$ and (\ref{r1}),  we get
\begin{equation}\label{in50}
 \max\{|h_n|,\,|d_n(z)|\}\le (1+\frac1v)\frac1v
\end{equation}
We take $v\ge 3$.
Equalities (\ref{in30}), (\ref{in50})  together complete the proof of lemma.
\end{proof}\qquad
The last Lemma implies that in $\mathcal C^+$ there exists an open set with
non-empty interior such that on this set $s_n(z)$ converges  to $s(z)$.
 The Stieltjes transform of our 
random variables is an analytic function on $\mathcal C^+$ 
and locally bounded (that is $|s_n(z)|\le v^{-1}$ for any $v>0$). 
By Montel's Theorem (see, for instance,
\cite{Conway}, p. 153, Theorem 2.9) $s_n(z)$ converges to $s(z)$ 
uniformly on  any compact set $\mathcal K\subset \mathcal C^+$ in the upper half-plane.
  This implies that $\Delta_n\to 0$ as $n\to\infty$. Thus the proof  of Theorem \ref{main} in the case $m=2$ is complete.

\section{  The proof of the main result in the general case} Recall that
$\mathbf H^{(q)}$ and $\mathbf J$ are defined by following  equalities, with $q=1,\ldots, m$,
\begin{equation}
\mathbf H^{(q)}=\left
(\begin{matrix}{\mathbf X^{(q)}\quad \mathbf O}\\{\mathbf O\quad
{{\mathbf X}^{(m-q+1)}}^*}\end{matrix}\right),
\qquad
\mathbf J=\left(\begin{matrix}{\mathbf O\quad \mathbf I_{p_m}}\\{\mathbf I_{p_0}\quad \mathbf O}\end{matrix}\right),
\end{equation}
where $\mathbf I_k$ denotes the identity matrix of {dimension} $k$.
Note that  $\mathbf H^{(q)}$ {is a $(p_{q-1}+p_{m-q+1})\times(p_{q}+p_{m-q})$ --matrix}.
Let $\mathbf V=\prod_{q=1}^m\mathbf H^{(q)}$, $\widehat{\mathbf V}:=\mathbf V\mathbf{J}$, and denote by
$\mathbf R$ its resolvent matrix,
\begin{equation}\notag
\mathbf R:=(\widehat{\mathbf V}-z\mathbf I)^{-1}.
\notag
\end{equation}
We shall use the following  ``symmetrization'' of one-sided distributions. Let
$\xi^2$ be a positive random variable. Define
$\widetilde \xi:=\varepsilon\xi$ where $\varepsilon$ denotes a Rademacher
random variable with $\Pr\{\varepsilon=\pm1\}=1/2$ which independent of
$\xi$. We apply this symmetrization to the distribution of the  singular
values of the matrix $\mathbf X^2$. Note that the  symmetrized  distribution function
$\widetilde F_n(x)$ satisfies the equation
\begin{equation}\notag
\widetilde F_n(x)=1/2(1+\text{sgn} \{x\}\,F_n(x^2)),
\end{equation}
and this function is the empirical spectral distribution function of the random matrix $\mathbf W$.
Furthermore, we note that the symmetrization of the 
distribution function $G(x)$ has the Stieltjes transform $s(z)$ which satisfies the following equation
\begin{equation}\label{main21*}
1+zs(z)-\frac {s(z)}z\prod_{\nu=1}^m{(1-y_{\nu}-zy_{\nu}s(z))}=0.
\end{equation}
In the rest of paper we shall prove that Stieltjes transform of expected spectral distribution function $s_n(z)=\int_{-\infty}^{\infty}\frac1{x-z}
\text{d}\E \widetilde F_n(x)$ satisfies the equation
\begin{equation}\label{main31}
1+zs_n(z)-\frac {s_n(z)}z\prod_{\nu=1}^m(1-y_{\nu}-zy_{\nu}s_n(z))=\delta_n(z),
\end{equation}
where $\delta_n(z)$ denotes some function such that $\delta_n(z)\to 0$ as $n\to \infty$.

We start from the simple equality
\begin{equation}\label{main41}
1+zs_n(z)=\frac1{2n}\Tr \widehat{\mathbf V}\mathbf R.
\end{equation}
By definition of the matrices $\mathbf V$, $\mathbf H^{(q)}$ and $\mathbf {J}$, we get
\begin{align}\label{auxiliary01}
1+zs_n(z)=\frac1{2n\sqrt {p_1}}\sum_{j=1}^{p_0}\sum_{k=1}^{p_1}\E X^{(1)}_{jk}&[\mathbf V_{2,m}\mathbf J\mathbf R]_{kj}\notag\\&
+\frac1{2n\sqrt {p_m}}\sum_{j=1}^{p_{m-1}}\sum_{k=1}^{p_m}\E X^{(m)}_{jk}[\mathbf V_{2,m}\mathbf J\mathbf R]_{j+p_1,k+p_0},
\end{align}
where $\mathbf V_{\alpha,\beta}=\prod_{q=a}^b\mathbf H^{(q)}$.
To simpify the  calculations we assume that $X_{jk}^{(\nu)}$ are
i.i.d. Gaussian random variables, and we  shall use the following well-known
 equality for a Gaussian r.v. $\xi$
\begin{equation}\label{auxiliary11}
\E\xi f(\xi)=\E f'(\xi),
\end{equation}
for every differentiable function $f(x)$ such that both sides exist. 
By Lemma \ref{teilor}, we obtain that the error involved in this
Gaussian assumption is of order $O(\tau_n)$.
Recall the  notation $\varepsilon_n(z)$  for generic error functions such 
that $|\varepsilon_{n}(z)|\le
C\tau_nv^{-q}$, for some $q\ge 0$. Let $p_0=n$ and $n_1=\max_{0\le \nu\le m-1}\{p_{\nu}+p_{m-\nu}\}$.
Let $\mathbf e_1,\ldots,\mathbf e_{n_1}$ be an  orthonormal basis 
of $\mathbb R^{n_1}$.
First we note that, for $j=1,\ldots,p_{q-1}$ and $k=1,\ldots,p_q$,
\begin{equation}\label{auxiliary21}
\frac{\partial  \mathbf H^{(q)}}{\partial X^{(q)}_{jk}}=\frac1{\sqrt{p_q}}\mathbf e_j\mathbf e_k^T,\qquad
\frac{\partial  \mathbf H^{(m-q+1)}}{\partial X^{(q)}_{jk}}=\frac1{\sqrt{p_q}}\mathbf e_{k+p_{m-q}}\mathbf e_{j+p_{m-q+1}}^T,
\end{equation}
and, for $j=1,\ldots,p_{m-q}$ and $k=1,\ldots,p_{m-q+1}$
\begin{equation}\label{auxiliary21*}
\frac{\partial  \mathbf H^{(m-q+1)}}{\partial X^{(m-q+1)}_{jk}}=\frac1{\sqrt{p_{m-q+1}}}\mathbf e_j\mathbf e_k^T,\qquad
\frac{\partial  \mathbf H^{(q)}}{\partial X^{(m-q+1)}_{jk}}=\frac1{\sqrt{p_{m-q+1}}}\mathbf e_{k+p_{q-1}}\mathbf e_{j+p_q}^T.
\end{equation}
Now we may compute the derivatives of the matrix $\mathbf V_{2,m}\mathbf J\mathbf R$ as follows
\begin{align}\label{auxiliary41}
\frac{\partial (\mathbf V_{2,m}\mathbf J\mathbf R)}{\partial X^{(1)}_{jk}}=\frac1{\sqrt {p_1}}\mathbf V_{2,m-1}\mathbf e_{k+p_{m-1}}\mathbf e_{j+p_m}^T\mathbf J\mathbf R
&-\frac1{\sqrt {p_1} }\mathbf V_{2,m}\mathbf J\mathbf R\mathbf e_j\mathbf e_k^T\mathbf V_{2,m}\mathbf J\mathbf R
\notag\\
&-\frac1{\sqrt {p_1} }\mathbf V_{2,m}\mathbf J\mathbf R\mathbf V_{1,m-1}\mathbf e_{k+p_{m-1}}\mathbf e_{j+p_m}^T
\mathbf J\mathbf R.\qquad
\end{align}
and
\begin{align}\label{auxiliary42}
\frac{\partial (\mathbf V_{2,m}\mathbf J\mathbf R)}{\partial X^{(m)}_{jk}}=\frac1{\sqrt {p_{m}}}\mathbf V_{2,m-1}\mathbf e_{j}\mathbf e_{k}^T\mathbf J\mathbf R
&-\frac1{\sqrt {p_{m}}}\mathbf V_{2,m}\mathbf J\mathbf R\mathbf e_{k+n}\mathbf e_{j+p_1}^T\mathbf V_{2,m-1}\mathbf J\mathbf R
\notag\\
&-\frac1{\sqrt {p_{m}} }\mathbf V_{2,m}\mathbf J\mathbf R\mathbf V_{1,m-1}\mathbf e_{j}\mathbf e_{k}^T
\mathbf J\mathbf R.
\end{align}
The equalities (\ref{auxiliary01}) and (\ref{auxiliary41}) together imply
\begin{equation}\label{auxiliary61}
1+zs_n(z)=A_1+A_2+A_3+B_1+B_2+B_3+\varepsilon_n(z),
\end{equation}
where
\begin{align}
A_1&:=\frac1{2np_1}\E\sum_{k=1}^{p_1}[\mathbf V_{2,m-1}]_{k,k+p_{m-1}}\sum_{j=1}^{n}[\mathbf J\mathbf R]_{j+p_m,j},\notag\\
A_2&=-\frac1{2np_1}\E\sum_{j=1}^{n}\sum_{k=1}^{p_1}[\mathbf V_{2,m}\mathbf J\mathbf R]_{k,j}^2,\notag\\
A_3&:=-\frac1{2np_1}\E\sum_{k=1}^{p_1} [\mathbf V_{2,m}\mathbf J\mathbf R\mathbf V_{1,m-1}]_{k,k+p_{m-1}}
\sum_{j=1}^{n}[ \mathbf J\mathbf R]_{j+p_m,j}\notag
\end{align}
and
\begin{align}
B_1&=\frac1{2np_m}\E\sum_{j=1}^{p_{m-1}}[\mathbf V_{1,m-1}]_{j+p_1,j}\sum_{k=1}^{p_m}[\mathbf J\mathbf R]_{k,k+n},\notag\\
B_2&=-\frac1{2np_m}\E\sum_{j=1}^{p_{m-1}}\sum_{k=1}^{p_m}[\mathbf V_{2,m}\mathbf J\mathbf R]_{j+p_1,k+n}^2,\notag\\
B_3&:=-\frac1{2np_m}\E\sum_{j=1}^{p_{m-1}} [\mathbf V_{2,m}\mathbf J\mathbf R\mathbf V_{1,m-1}]_{j+p_1,j}
\sum_{k=1}^{p_m}[\mathbf J\mathbf R]_{k,k+n}
.\notag
\end{align}
\begin{lem}
Under the  conditions of Theorem \ref{main} there exists a constant $C>0$
such that the following inequality holds
\begin{equation}
\max\{|A_2|,\,|B_2|\}\le \frac C{nv^2}.
\end{equation}
\end{lem}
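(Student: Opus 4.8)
The plan is to bound $|A_2|$ and $|B_2|$ directly by Hilbert--Schmidt norm estimates, exactly as in the proof of Lemma~\ref{lem6} for the case $m=2$. First I would observe that both $A_2$ and $B_2$ are of the form $-\frac1{2np}\E\sum_{j,k}[\mathbf V_{2,m}\mathbf J\mathbf R]_{\cdot,\cdot}^2$, i.e.\ up to the scalar factor $\frac1{2np}$ they are $\E$ of a sum of squares of entries of the matrix $\mathbf V_{2,m}\mathbf J\mathbf R$ over an index block, hence bounded in absolute value by $\frac1{2np}\E\|\mathbf V_{2,m}\mathbf J\mathbf R\|_2^2$. Here $p$ is $p_1$ or $p_m$, and since $cn\le p_l\le Cn$ for all $l$, the prefactor is of order $\frac1{n^2}$.

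Next I would peel off $\mathbf J$ and $\mathbf R$ using the operator-norm bounds: $\|\mathbf J\|=1$ and $\|\mathbf R\|\le v^{-1}$ (with $v=\im z$), so that
\begin{equation}\notag
\E\|\mathbf V_{2,m}\mathbf J\mathbf R\|_2^2\le \frac{1}{v^2}\,\E\|\mathbf V_{2,m}\|_2^2.
\end{equation}
Then I would apply Lemma~\ref{norm2} (the norm lemma already invoked in the truncation section, which controls $\E\|\mathbf V_{a,b}^{(\cdot)}\cdots\|_2^2$ by $Cn$) to conclude $\E\|\mathbf V_{2,m}\|_2^2\le Cn$; this is the higher-$m$ analog of the computation $\E\|\mathbf H^{(2)}\|_2^2\le Cn$ used in Lemma~\ref{lem6}. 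Combining the three bounds gives $\max\{|A_2|,|B_2|\}\le \frac{C}{n^2}\cdot\frac1{v^2}\cdot Cn=\frac{C}{nv^2}$, which is the claim.

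The main obstacle — really the only nontrivial point — is justifying the bound $\E\|\mathbf V_{2,m}\|_2^2\le Cn$ uniformly in $n$. This is where the truncation conditions \eqref{conditions} and Lemma~\ref{norm2} do the work: after truncation the entries are bounded and have (essentially) unit variance, and a product of $m-1$ such independent normalized rectangular matrices has squared Hilbert--Schmidt norm of order $n$ on average, because each factor $\mathbf X^{(\nu)}$ contributes a bounded operator norm in expectation and the dimensions $p_\nu$ are all comparable to $n$. I would cite Lemma~\ref{norm2} for this and not reprove it. Everything else (the passage from entrywise sums of squares to $\|\cdot\|_2^2$, peeling off $\mathbf J$ and $\mathbf R$) is routine and identical in structure to Lemma~\ref{lem6}, so the proof amounts to one sentence reducing to the $m=2$ argument plus an invocation of the norm lemma.
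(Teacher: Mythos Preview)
Your proposal is correct and follows essentially the same approach as the paper: bound the entrywise sum of squares by $\frac{C}{n^2}\E\|\mathbf V_{2,m}\mathbf J\mathbf R\|_2^2$, strip off $\mathbf J$ and $\mathbf R$ via $\|\mathbf R\|\le v^{-1}$, and then invoke Lemma~\ref{norm2} to get $\E\|\mathbf V_{2,m}\|_2^2\le Cn$. The paper's proof is the same two-line computation.
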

\begin{proof}
 Note that
\begin{align}
 |A_2|\le \frac1{n^2}\E\|\mathbf V_{2,m}\mathbf J\mathbf R\|_2^2\le \frac C{n^2v^2}\E\|\mathbf V_{2,m}\|_2^2
\end{align}
By Lemma \ref{norm2},
\begin{equation}
\E\|\mathbf V_{2,m}\|_2^2\le Cn
\end{equation}
The last two inequalities conclude the proof.
The bound for $|B_2|$ is similar.
Thus the Lemma is proved.
\end{proof}
\begin{lem}
Under conditions of Theorem \ref{main} there exists a constant $C>0$
such that the following inequality holds
\begin{equation}\notag
\max\{|A_1|,\,|B_1|\}\le \frac C{nv^2}.
\end{equation}
\end{lem}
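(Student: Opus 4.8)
The plan is to bound $A_1$ and $B_1$ by showing that each is a product of a normalized trace-type quantity (of order $O(1)$) and a factor that is small. First I would recall the structure of $A_1$: it is
\[
A_1=\frac1{2np_1}\E\sum_{k=1}^{p_1}[\mathbf V_{2,m-1}]_{k,k+p_{m-1}}\sum_{j=1}^{n}[\mathbf J\mathbf R]_{j+p_m,j}.
\]
The key observation is that $[\mathbf V_{2,m-1}]_{k,k+p_{m-1}}$ is a diagonal entry of a block-off-diagonal matrix $\mathbf V_{2,m-1}=\prod_{q=2}^{m-1}\mathbf H^{(q)}$ whose two diagonal blocks are products of the form $\mathbf X^{(2)}\cdots\mathbf X^{(m-1)}$ and its conjugate; the relevant index range $k=1,\ldots,p_1$ together with the shift $+p_{m-1}$ picks out an \emph{off-block} position, so that the entry $[\mathbf V_{2,m-1}]_{k,k+p_{m-1}}$ is actually $0$ by the block structure of $\mathbf H^{(q)}$ when the number of factors has the wrong parity, or is an entry of the block $\mathbf X^{(2)}\cdots$ otherwise. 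Either way the first step is to identify $\sum_{k=1}^{p_1}[\mathbf V_{2,m-1}]_{k,k+p_{m-1}}$ with (a constant multiple of) $\Tr$ of a product of the $\mathbf X^{(\nu)}$ matrices times a projection, and bound it by $\E^{1/2}\|\mathbf V_{2,m-1}\|_2^2\le \sqrt{Cn}$ via Lemma \ref{norm2} and Cauchy--Schwarz.

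Next I would control the second factor $\sum_{j=1}^n[\mathbf J\mathbf R]_{j+p_m,j}$. By the definition of $\mathbf J$ this equals $\sum_{j=1}^n[\mathbf R]_{j,j}$, i.e.\ (up to the additive constant absorbed in the definition of $s_n$) it equals $2n\,s_n(z)$ plus bounded corrections, so it is $O(n v^{-1})$. Combining: $|A_1|\le \frac1{2np_1}\cdot \E^{1/2}\bigl|\sum_k[\mathbf V_{2,m-1}]_{k,k+p_{m-1}}\bigr|^2\cdot \E^{1/2}\bigl|\sum_j[\mathbf J\mathbf R]_{j+p_m,j}\bigr|^2$. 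The first expectation is $O(n)$ by the Hilbert--Schmidt bound, and since $p_1\ge cn$, the prefactor $\frac1{2np_1}$ is $O(n^{-2})$. The trouble is that $O(n^{-2})\cdot O(\sqrt n)\cdot O(n v^{-1})=O(n^{-1/2}v^{-1})$, which is not quite the claimed $O(n^{-1}v^{-2})$; the point I would need to exploit is that the \emph{diagonal} sum $\sum_{k=1}^{p_1}[\mathbf V_{2,m-1}]_{k,k+p_{m-1}}$ is really a trace of a product of at least two matrices $\mathbf X^{(\nu)}$ with a resolvent factor, so that after applying the integration-by-parts / Lemma \ref{teilor} identity once more this sum is itself of order $O(v^{-1}\sqrt n)$ only through cancellation, and the genuine bound uses $\|\mathbf J\mathbf R\|\le v^{-1}$ to turn one more factor of $n$ into a factor of $v^{-1}$, i.e.\ write $|A_1|\le \frac{1}{2np_1 v}\,\E\|\mathbf V_{2,m-1}\|_2\cdot\|\mathbf R\|_{\mathrm{HS}}$-type estimate.

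Concretely, the cleanest route: bound $|A_1|\le \frac{1}{2np_1}\E\bigl(\|\mathbf V_{2,m-1}\|_2\,\|\mathbf J\mathbf R\|_2\bigr)\le \frac{1}{2np_1}\E^{1/2}\|\mathbf V_{2,m-1}\|_2^2\,\E^{1/2}\|\mathbf R\|_2^2$, where $\|\mathbf R\|_2^2=\sum_i|\lambda_i-z|^{-2}\le (p_0+p_m)v^{-2}\le Cn v^{-2}$, and $\E\|\mathbf V_{2,m-1}\|_2^2\le Cn$ by Lemma \ref{norm2}. This gives $|A_1|\le \frac{1}{2np_1}\cdot\sqrt{Cn}\cdot\sqrt{Cn}v^{-1}=\frac{C}{p_1 v}\le \frac{C}{nv}$. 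To upgrade $v^{-1}$ to $v^{-2}$ and gain the extra $n^{-1}$ actually requires recognizing (as in the treatment of $A_2$, $B_2$ above and in Lemma \ref{lem6}) that the diagonal sum $\sum_k[\mathbf V_{2,m-1}]_{k,k+p_{m-1}}$ picks off an off-diagonal block and therefore, being $\E$ of an entry of a product of independent centered matrices, is itself $O(\sqrt n)$ \emph{with a spare $v^{-1}$ after one resolvent expansion}; so I would apply the resolvent identity (or Lemma \ref{teilor}) to $\mathbf R$ inside $A_1$ once, splitting into a ``main'' piece that vanishes by the block/parity structure and an ``error'' piece carrying an extra $v^{-1}$ and an extra $n^{-1}$ from the normalization. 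The main obstacle is precisely this bookkeeping: verifying that the leading term of $A_1$ after the resolvent expansion is exactly cancelled by the block structure (so that only the $O(n^{-1}v^{-2})$ remainder survives), which is the same mechanism already used for $A_2$ and $B_2$ but applied to an off-diagonal rather than a diagonal block. I would then note the bound for $B_1$ is identical after exchanging the roles of the indices $1\leftrightarrow m$ and $p_1\leftrightarrow p_m$, $n\leftrightarrow p_m$, completing the proof.
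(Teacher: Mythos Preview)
Your proposal does not prove the bound. The Hilbert--Schmidt arguments you sketch give at best $|A_1|\le C/(nv)$ (and your ``cleanest route'' step $|A_1|\le\frac1{2np_1}\E\|\mathbf V_{2,m-1}\|_2\|\mathbf R\|_2$ is not a valid consequence of the definition of $A_1$: the quantity $\bigl(\sum_k a_k\bigr)\bigl(\sum_j b_j\bigr)$ is not a trace of a product, so the bound $|\Tr(AB)|\le\|A\|_2\|B\|_2$ does not apply). Your proposed fix via a further resolvent expansion or Lemma~\ref{teilor} is not carried out, and there is no reason to expect it to produce the missing factors.

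The point you touch on but then abandon is precisely the whole argument. Since each $\mathbf H^{(q)}$ is block-diagonal, so is $\mathbf V_{2,m-1}$; the index pair $(k,k+p_{m-1})$ with $1\le k\le p_1$ lands in the upper-right off-diagonal block, hence $[\mathbf V_{2,m-1}]_{k,k+p_{m-1}}=0$ for every sample and $A_1=0$ identically (and symmetrically $B_1=0$). The paper argues slightly more generally: writing $A_1=\E(UV)$ with $U=\frac1{p_1}\sum_k[\mathbf V_{2,m-1}]_{k,k+p_{m-1}}$ and $V=\frac1{2n}\sum_j[\mathbf J\mathbf R]_{j+p_m,j}$, one uses the variance bounds of Lemmas~\ref{var0}--\ref{var1} to get
\[
|A_1-\E U\cdot\E V|\le \E^{1/2}|U-\E U|^2\,\E^{1/2}|V-\E V|^2\le \frac{C}{nv^2},
\]
and then observes $\E U=0$ by Lemma~\ref{mean} (each entry of $\mathbf V_{2,m-1}$ is a sum of products of independent centered variables). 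Either way, the mechanism is the \emph{vanishing} of the $\mathbf V_{2,m-1}$ factor, not a norm estimate on it; no resolvent expansion is needed.
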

\begin{proof}
 We consider the quantity $A_1$ only. The bound for $B_1$ is similar.
By Lemma \ref{var1}, we have
\begin{equation}\notag
 |A_1-\frac1{2p_1}\sum_{k=1}^{p_1}\E[\mathbf V_{1,m-1}]_{k,k+n}\frac1n\sum_{j=1}^{n}\E[\mathbf J\mathbf R]_{j+n,j}|\le\frac C{nv^2}.
\end{equation}
Direct calculation shows that
\begin{equation}\notag\qquad
 \E[\mathbf V_{1,m-1}]_{k,k+n}=0.
\end{equation}
Thus the Lemma is proved.
\end{proof}
\begin{lem}\label{lem11}
Under conditions of Theorem \ref{main} there exists a constant $C>0$
such that the following inequality holds
\begin{align}
&|A_3+\frac1{2p_1}\sum_{k=1}^{p_1}\E[\mathbf V_{2,m}\mathbf J\mathbf R\mathbf V_{1,m-1}]_{k,k+p_{m-1}}
\frac1{n}\sum_{j=1}^{n}\E[\mathbf J\mathbf R]_{j+p_m,j}|\le\frac C{nv^2},\notag\\
&|B_3+\frac1{2n}\sum_{k=1}^{p_{m-1}}\E[\mathbf V_{2,m}\mathbf J\mathbf R\mathbf V_{1,m-1}]_{k+p_{1},k}\frac1{p_m}
\sum_{j=1}^{p_m}\E[\mathbf J\mathbf R]_{j,j+n}|\le\frac C{nv^2}.\notag
\end{align}
\end{lem}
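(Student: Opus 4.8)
The plan is to prove Lemma \ref{lem11} by the same variance-splitting technique used in Lemma \ref{lem1} (case $m=2$). The two assertions concern the quantities $A_3$ and $B_3$, each of which is, up to sign, the expectation of a product of two sums: one sum of diagonal-type entries of $\mathbf V_{2,m}\mathbf J\mathbf R\mathbf V_{1,m-1}$, and one sum of entries of $\mathbf J\mathbf R$. The claim is that the expectation of this product may be replaced by the product of the expectations, at the cost of an error of order $C/(nv^2)$. First I would write, for $A_3$,
\begin{align}\notag
&\left|A_3+\frac1{2p_1}\sum_{k=1}^{p_1}\E[\mathbf V_{2,m}\mathbf J\mathbf R\mathbf V_{1,m-1}]_{k,k+p_{m-1}}\;\frac1n\sum_{j=1}^{n}\E[\mathbf J\mathbf R]_{j+p_m,j}\right|\\\notag
&\qquad\le \E^{\frac12}\left|\frac1{2p_1}\sum_{k=1}^{p_1}\left([\mathbf V_{2,m}\mathbf J\mathbf R\mathbf V_{1,m-1}]_{k,k+p_{m-1}}-\E[\mathbf V_{2,m}\mathbf J\mathbf R\mathbf V_{1,m-1}]_{k,k+p_{m-1}}\right)\right|^2\\\notag
&\qquad\qquad\times\E^{\frac12}\left|\frac1n\sum_{j=1}^{n}\left([\mathbf J\mathbf R]_{j+p_m,j}-\E[\mathbf J\mathbf R]_{j+p_m,j}\right)\right|^2,
\end{align}
which is just the identity $\E XY-\E X\,\E Y=\E(X-\E X)(Y-\E Y)$ followed by Cauchy--Schwarz. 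So the lemma reduces to two variance bounds: the variance of $\frac1n\sum_j[\mathbf J\mathbf R]_{j+p_m,j}$ is $O(1/(n v^2))$, and the variance of $\frac1{p_1}\sum_k[\mathbf V_{2,m}\mathbf J\mathbf R\mathbf V_{1,m-1}]_{k,k+p_{m-1}}$ is $O(1/(nv^2))$.

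Both variance estimates I would obtain from the concentration lemmas stated in the Appendix (Lemma \ref{var0} and Lemma \ref{var1}), exactly as was done in the proof of Lemma \ref{lem1} for $m=2$, where the analogous bound $\frac C{nv^4}$ appeared. The first factor involves only the resolvent $\mathbf R$, whose entries are $v^{-1}$-bounded and which is a smooth (Lipschitz in the appropriate sense) function of the independent entries $X^{(\nu)}_{jk}$; a martingale-difference / Efron--Stein argument, which is the content of Lemma \ref{var0}, gives the variance bound $O(1/(nv^2))$ (more precisely, each of the $O(n)$ coordinate perturbations changes the linear statistic by $O(1/(n v))$ in a suitable averaged sense, and squaring and summing yields $1/(n v^2)$). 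The second factor involves the longer product $\mathbf V_{2,m}\mathbf J\mathbf R\mathbf V_{1,m-1}$; here I would invoke Lemma \ref{var1}, using that the factors $\mathbf V_{2,m}$ and $\mathbf V_{1,m-1}$ have Hilbert--Schmidt norm controlled by $Cn$ in expectation (Lemma \ref{norm2}), so that differentiating with respect to a single entry and applying $\|\mathbf R\|\le v^{-1}$ produces the same order of fluctuation per coordinate. Summing over the $O(n)$ coordinates gives the stated $O(1/(nv^2))$ variance, hence the product of the two $\E^{1/2}(\cdot)$ factors is $O(1/(nv^2))$. The bound for $B_3$ is obtained by the identical argument with the roles of the left and right index blocks interchanged (the asymmetry between $n=p_0$ and $p_m$ only affects normalizing constants and is absorbed into $C$).

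The main obstacle, and the only place requiring genuine care rather than bookkeeping, is the second variance bound: one must verify that the long matrix product $\mathbf V_{2,m}\mathbf J\mathbf R\mathbf V_{1,m-1}$, which contains $2(m-1)$ of the random matrices $\mathbf H^{(q)}$ in addition to the resolvent, still has coordinate-wise fluctuations of the right order. This is where the truncation bound $|X^{(\nu)}_{jk}|\le c\tau_n\sqrt n$ from (\ref{conditions}) and the norm estimate of Lemma \ref{norm2} are essential: differentiating the product with respect to one $X^{(\nu)}_{jk}$ (using (\ref{auxiliary41})--(\ref{auxiliary42})) produces terms each of which is a product of at most two sub-blocks of the original product together with rank-one insertions, and each such term must be shown, after expectation, to have Hilbert--Schmidt norm $O(\sqrt n)$ uniformly; then Lemma \ref{var1} packages the sum of squares of these per-coordinate increments into the variance bound. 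Once both variance bounds are in hand, the Cauchy--Schwarz display above closes the proof immediately, so I would present the detailed computation only for $A_3$ and remark that $B_3$ is symmetric.
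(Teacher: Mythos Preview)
Your proposal is correct and follows essentially the same route as the paper's proof: write the difference as a covariance, apply Cauchy--Schwarz (the paper calls it H\"older's inequality), and then bound the two variance factors by invoking Lemma \ref{var1} for the long product $\mathbf V_{2,m}\mathbf J\mathbf R\mathbf V_{1,m-1}$ and Lemma \ref{var0} for the resolvent sum $\sum_j[\mathbf J\mathbf R]_{j+p_m,j}$. Your additional discussion of why Lemma \ref{var1} applies to the long product is accurate but unnecessary here, since that lemma is stated and proved separately in the Appendix; the paper's own argument simply cites it and concludes.
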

\begin{proof}

 Applying H\"older's inequality and Lemmas \ref{var1} and \ref{var0} together, we get
\begin{align}
 |A_3&+\frac1{2p_1}\sum_{k=1}^{p_1}\E[\mathbf V_{2,m}\mathbf J\mathbf R\mathbf V_{1,m-1}]_{k,k+p_{m-1}}\frac1n\sum_{j=1}^{n}\E[\mathbf J\mathbf R]_{j+p_m,j})|\notag\\&\le
\E^{\frac12}|\frac1{2n}(\sum_{k=1}^{p_1}[\mathbf V_{2,m}\mathbf J\mathbf R\mathbf V_{1,m-1}]_{k,k+p_{m-1}}-
 \E\sum_{k=1}^{p_1}[\mathbf V_{2,m}\mathbf J\mathbf R\mathbf V_{1,m-1}]_{k,k+p_{m-1}})|^2\notag\\&\qquad\qquad\qquad\qquad\qquad\times\E^{\frac12}|\frac1n(\sum_{j=1}^{n}[\mathbf J\mathbf R]_{j+p_m,j}-
\E\sum_{j=1}^{n}[\mathbf J\mathbf R]_{j+p_m,j})|^2\le\frac C{nv^2}.\notag
\end{align}\qquad
Thus the Lemma is proved.

\end{proof}

Introduce the following notations, for $\alpha,\beta=1,\ldots,m$,
\begin{equation}\notag
 f_{\alpha,\beta}=\frac1{p_{\alpha-1}}\sum_{k=1}^{p_{\alpha-1}} \E[\mathbf V_{\alpha,m}\mathbf J\mathbf R\mathbf V_{1,\beta}]_{k,k+p_{\beta}},\qquad
g_{\alpha,\beta}=\frac1{p_{\beta+1}}\sum_{k=1}^{p_{\beta}} \E[\mathbf V_{\alpha,m}\mathbf J\mathbf R\mathbf V_{1,\beta}]_{k+p_{\alpha-1},k},
\end{equation}
and
\begin{equation}\notag
 f_{m+1,0}=\frac1{p_m}\sum_{k=1}^{p_m} \E[\mathbf J\mathbf R]_{k,k+p_{0}},\qquad
g_{m+1,0}=\frac1{p_0}\sum_{k=1}^{n} \E[\mathbf J\mathbf R]_{k+p_m,k},
\end{equation}
It is straightforward to check that
\begin{align}\label{auxiliary71}
f_{m+1,0}&=\frac1{p_m}\sum_{j=1}^{p_m}\E [\mathbf R]_{k+n,k+n}=\frac1{z}(1-y_m-zy_ms_n(z)),
\notag\\g_{m+1,0}&=\frac1{p_0}\sum_{j=1}^{n}\E [\mathbf R]_{j,j}=s_n(z).
\end{align}
By Lemma \ref{lem11} and  equality (\ref{auxiliary71}), we may write
\begin{equation}\label{a1}
A_3+B_3=-\frac12s_n(z)f_{2,m-1}-\frac12(-y_ms_n(z)+\frac{1-y_m}{z})g_{2,m-1}+\varepsilon_n(z).
\end{equation}
Now we investigate  the behavior of the coefficients $f_{\alpha,m-\alpha+1}$ and  $g_{\alpha, m-\alpha+1}$, for $\alpha=2,\ldots,m$. Assume that $\alpha\le m-\alpha$.
We have
\begin{align}\label{a2}
f_{\alpha,m-\alpha+1}&=\frac1{p_{\alpha-1}\sqrt {p_{\alpha}}}\sum_{j=1}^{p_{\alpha-1}}
\sum_{k=1}^{p_{\alpha}}\E X^{(\alpha)}_{j,k}[\mathbf V_{\alpha+1,m}
\mathbf J\mathbf R\mathbf V_{1,m-\alpha+1}]_{k,j+p_{m-\alpha+1}}\notag\\
g_{\alpha,m-\alpha}&=\frac1{p_{m-\alpha}\sqrt {p_{m-\alpha+1}}}
\sum_{j=1}^{p_{m-\alpha}}\sum_{k=1}^{p_{m-\alpha+1}}\E X^{(m-\alpha+1)}_{j,k}
[\mathbf V_{\alpha+1,m}\mathbf J\mathbf R\mathbf V_{1,m-\alpha+1}]_{j+p_{\alpha-1},k}.
\end{align}
It is straightforward to check that
\begin{align}
\frac{\partial(\mathbf V_{\alpha+1,m}\mathbf J\mathbf R\mathbf V_{1,m-\alpha+1})}
{\partial X^{(\alpha)}_{j,k}}=
\frac1{\sqrt{p_{\alpha}}}\mathbf V_{\alpha+1,m-\alpha}\mathbf e_{k+p_{m-\alpha}}
\mathbf e_{j+p_{m-\alpha+1}}^T\mathbf V_{m-\alpha+2,m}\mathbf J\mathbf R\mathbf V_{1,m-\alpha+1}I\{\alpha\le m-\alpha\}
&\notag\\
+\frac1{\sqrt{p_{\alpha}}}\mathbf V_{[\alpha+1,m]}\mathbf J\mathbf R\mathbf V_{1,\alpha-1}\mathbf e_j\mathbf e_k^T
\mathbf V_{\alpha+1,m-\alpha+1}I\{\alpha\le m-\alpha\}
+\frac1{\sqrt{p_{\alpha}}}\mathbf V_{\alpha+1,m}\mathbf J\mathbf R\mathbf V_{1,m-\alpha}\mathbf e_{k+p_{m-\alpha}}
\mathbf e_{j+p_{m-\alpha+1}}^T&\notag\\
-\frac1{\sqrt{p_{\alpha}}}\mathbf V_{[\alpha+1,m]}\mathbf J\mathbf R\mathbf V_{1,\alpha-1}
\mathbf e_{j}\mathbf e_{k}^T\mathbf V_{\alpha+2,m}\mathbf J\mathbf R\mathbf V_{1,m-\alpha+1}&\notag\\
-\frac1{\sqrt{p_{\alpha}}}\mathbf V_{\alpha+1,m}\mathbf J\mathbf R\mathbf V_{1,m-\alpha}\mathbf e_{k+p_{m-\alpha}}\mathbf e_{j+p_{m-\alpha+1}}^T\mathbf V_{m-\alpha+2,m}\mathbf J\mathbf R\mathbf V_{1,m-\alpha+1}&.\notag
\end{align}
and
\begin{align}
\frac{\partial(\mathbf V_{\alpha+1,m}\mathbf J\mathbf R\mathbf V_{1,m-\alpha})}
{\partial X^{(m-\alpha+1)}_{j,k}}=
\frac1{\sqrt{p_{m-\alpha+1}}}\mathbf V_{\alpha+1,m-\alpha}\mathbf e_j\mathbf e_k^T\mathbf  V_{m-\alpha+2,m}\mathbf J
\mathbf R\mathbf V_{1,m-\alpha+1}I\{\alpha\le m-\alpha\}\qquad\qquad\qquad&\notag\\
+\frac1{\sqrt{p_{m-\alpha+1}}}\mathbf V_{\alpha+1,m}\mathbf J\mathbf R\mathbf V_{1,\alpha-1}
\mathbf e_{k+p_{\alpha-1}}\mathbf e_{j+p_{\alpha}}^T\mathbf V_{\alpha+1,m-\alpha+1}I\{\alpha\le m-\alpha\}
+\frac1{\sqrt{p_{\alpha}}}\mathbf V_{\alpha+1,m}\mathbf J\mathbf R\mathbf V_{1,m-\alpha}\mathbf e_j\mathbf e_k^T&\notag\\
-\frac1{\sqrt{p_{m-\alpha+1}}}\mathbf V_{\alpha+1,m}\mathbf J\mathbf R\mathbf V_{1,\alpha-1}\mathbf e_{k+p_{\alpha-1}}\mathbf e_{j+p_{\alpha}}^T\mathbf V_{\alpha+2,m}\mathbf J\mathbf R\mathbf V_{1,m-\alpha+1}&\notag\\
-\frac1{\sqrt{p_{m-\alpha+1}}}\mathbf V_{\alpha+1,m}\mathbf J\mathbf R\mathbf V_{1,m-\alpha}\mathbf e_j\mathbf e_k^T\mathbf V_{m-\alpha+2,m}\mathbf J\mathbf R\mathbf V_{1,m-\alpha+1}.&\notag
\end{align}

Applying the Lemmas \ref{teilor} and \ref{var1}, we obtain the following relation
\begin{equation}\notag
 f_{\alpha,m+1-\alpha}= D_1+\ldots+D_5+\varepsilon_n(z),
\end{equation}
where
\begin{align}
D_1=&\frac1{p_{\alpha-1}p_{\alpha}}\E\sum_{j=1}^{p_{\alpha}}
[\mathbf V_{\alpha+1,m-\alpha}]_{k,k+p_{m-\alpha}}\sum_{k=1}^{p_{\alpha-1}}
[\mathbf V_{m-\alpha+2,m}\mathbf J\mathbf R\mathbf V_{1,m-\alpha+1}]_{j+p_{m-\alpha+1},j+p_{m-\alpha+1}}
I\{\alpha\le m-\alpha\}\notag\\
D_2=&\frac1{p_{\alpha-1}p_{\alpha}}\E\sum_{j=1}^{p_{\alpha-1}}
\sum_{k=1}^{p_{\alpha}}[\mathbf V_{\alpha+1,m}\mathbf J\mathbf R\mathbf V_{1,\alpha-1}]_{k,j}
[\mathbf V_{\alpha+1,m-\alpha+1}]_{k,j+p_{m-\alpha+1}}I\{\alpha\le m-\alpha\}
\notag\\
D_3=&\frac1{{p_{\alpha}}}\sum_{k=1}^{p_{\alpha}}\E [\mathbf V_{\alpha+1,m}\mathbf J\mathbf R\mathbf V_{1,m-\alpha}]_{k,k+p_{m-\alpha}}\notag\\
D_4=&-\frac1{p_{\alpha-1}p_{\alpha}}\E\sum_{j=1}^{p_{\alpha-1}}\sum_{k=1}^{p_{\alpha}}[\mathbf V_{\alpha+1,m}\mathbf J\mathbf R\mathbf V_{1,\alpha-1}]_{k,j}
[\mathbf V_{\alpha+2,m}\mathbf J\mathbf R\mathbf V_{1,m-\alpha}]_{k,j+p_{m-\alpha+1}}\notag\\
D_5=&-\frac1{p_{\alpha-1}p_{\alpha}}\E\sum_{k=1}^{p_{\alpha}}[\mathbf V_{\alpha+1,m}\mathbf J\mathbf R\mathbf V_{1,m-\alpha}]_{k,k+p_{m-\alpha}}\notag\\ &\qquad\qquad\qquad\qquad\qquad\times\sum_{j=1}^{p_{\alpha-1}}[\mathbf V_{m-\alpha+2,m}\mathbf J\mathbf R\mathbf V_{1,m-\alpha+1}]_{j+p_{m-\alpha+1},j+p_{m-\alpha+1}}.\notag
\end{align}
 Similar we get the representation for $g_{\alpha, m-\alpha}$.
\begin{equation}\notag
 g_{\alpha,m+1-\alpha}= \overline D_1+\ldots+\overline D_5+\varepsilon_n(z),
\end{equation}
where
\begin{align}
\overline D_1=&\frac1{p_{m-\alpha+1}p_{m-\alpha}}\E\sum_{j=1}^{p_{m-\alpha}}
[\mathbf V_{\alpha+1,m-\alpha}]_{j+p_{\alpha},j}\sum_{k=1}^{p_{m-\alpha+1}}
[\mathbf V_{m-\alpha+2,m}\mathbf J\mathbf R\mathbf V_{1,m-\alpha+1}]_{k,k}I\{\alpha\le m-\alpha\}\notag\\
\overline D_2=&\frac1{p_{m-\alpha+1}p_{m-\alpha}}\E\sum_{j=1}^{p_{m-\alpha}}
\sum_{k=1}^{p_{m-\alpha+1}}[\mathbf V_{\alpha+1,m}\mathbf J\mathbf R
\mathbf V_{1,\alpha-1}]_{j+p_{\alpha},k+p_{\alpha-1}}[\mathbf V_{\alpha+1,m-\alpha+1}]_{j+p_{\alpha},k}
I\{\alpha\le m-\alpha\}
\notag\\
\overline D_3=&\frac1{p_{m-\alpha}}\E \sum_{j=1}^{p_{m-\alpha}}[\mathbf V_{\alpha+1,m}\mathbf J\mathbf R\mathbf V_{1,m-\alpha}]_{j+p_{\alpha},j}\notag\\
\overline D_4=&-\frac1{p_{m-\alpha+1}p_{m-\alpha}}\E\sum_{j=1}^{p_{m-\alpha}}\sum_{k=1}^{p_{m-\alpha+1}}[\mathbf V_{\alpha+1,m}\mathbf J\mathbf R\mathbf V_{1,\alpha-1}]_{j+p_{\alpha},k+p_{\alpha-1}}
\notag\\& \qquad\qquad\qquad\qquad\qquad\qquad\qquad\qquad\qquad\qquad\times[\mathbf V_{\alpha+2,m}\mathbf J\mathbf R\mathbf V_{1,m-\alpha+1}]_{j+p_{\alpha},k}\notag\\
\overline D_5=&-\frac1{p_{m-\alpha+1}p_{m-\alpha}}\E\sum_{j=1}^{p_{m-\alpha}}[\mathbf V_{\alpha+1,m}\mathbf J\mathbf R\mathbf V_{1,m-\alpha}]_{j+p_{\alpha},j}\sum_{k=1}^{p_{m-\alpha+1}}[\mathbf V_{m-\alpha+2,m}\mathbf J\mathbf R\mathbf V_{1,m-\alpha+1}]_{k,k}.\notag
\end{align}
\begin{lem}\label{k1}
 Under the conditions of Theorem \ref{main} there exists a 
constant $C>0$ such that the following inequality holds
\begin{align}
\max\{|D_2|,|\overline D_2|\}\le \frac C{nv}\notag
\end{align}
and
\begin{equation}\notag
 \max\{|D_4|,|\overline D_4|\}\le \frac C{nv^2}
\end{equation}
\end{lem}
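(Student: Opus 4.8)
The plan is to derive both estimates of Lemma~\ref{k1} from two applications of the Cauchy--Schwarz inequality together with Lemma~\ref{norm2} and the elementary bounds $\|\mathbf R\|\le v^{-1}$, $\|\mathbf J\|=1$, $cn\le p_l\le Cn$. Each of the four quantities has the shape $(p_{\alpha-1}p_\alpha)^{-1}\E\sum_{j,k}\mathbf M_{k,j}\,\mathbf N_{k,j'}$, where for $D_2$ one takes $\mathbf M=\mathbf V_{\alpha+1,m}\mathbf J\mathbf R\mathbf V_{1,\alpha-1}$ and $\mathbf N=\mathbf V_{\alpha+1,m-\alpha+1}$; for $D_4$, $\mathbf M=\mathbf V_{\alpha+1,m}\mathbf J\mathbf R\mathbf V_{1,\alpha-1}$ and $\mathbf N=\mathbf V_{\alpha+2,m}\mathbf J\mathbf R\mathbf V_{1,m-\alpha}$; and $\overline D_2,\overline D_4$ arise by interchanging the roles of the left and right $\mathbf V$-products. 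Here $j$ runs over $1,\dots,p_{\alpha-1}$, $k$ over $1,\dots,p_\alpha$, and $j'$ is $j$ shifted by a block length; the first observation is simply that $\{(k,j)\}$ and $\{(k,j')\}$ each index a rectangular sub-block of the entries of $\mathbf M$ and $\mathbf N$, respectively.

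By Cauchy--Schwarz in the pair $(k,j)$ the double sum is at most $\|\mathbf M\|_2\,\|\mathbf N\|_2$ (partial Hilbert--Schmidt norms, dominated by the full ones), and then Cauchy--Schwarz in expectation gives $|D_2|\le(p_{\alpha-1}p_\alpha)^{-1}\E^{\frac12}\|\mathbf M\|_2^2\,\E^{\frac12}\|\mathbf N\|_2^2$, and similarly for the other three. For $D_2$, Lemma~\ref{norm2} yields $\E\|\mathbf N\|_2^2=\E\|\mathbf V_{\alpha+1,m-\alpha+1}\|_2^2\le Cn$ (the product $\mathbf V_{\alpha+1,m-\alpha+1}$ is non-degenerate precisely on the support $\{\alpha\le m-\alpha\}$ of the indicator), whereas applying Lemma~\ref{norm2} with the bounded matrix $\mathbf J\mathbf R$, $\|\mathbf J\mathbf R\|\le v^{-1}$, gives $\E\|\mathbf M\|_2^2\le Cnv^{-2}$; since $p_{\alpha-1}p_\alpha\ge cn^2$ this produces $|D_2|\le Cn^{-2}\cdot nv^{-1}=C(nv)^{-1}$. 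For $D_4$ \emph{both} factors carry a resolvent, so Lemma~\ref{norm2} bounds each of $\E\|\mathbf M\|_2^2$ and $\E\|\mathbf N\|_2^2$ by $Cnv^{-2}$, which is exactly the extra power of $v^{-1}$: $|D_4|\le Cn^{-2}\cdot nv^{-2}=C(nv^2)^{-1}$. The estimates for $\overline D_2$ and $\overline D_4$ are obtained verbatim, with left and right $\mathbf V$-products exchanged.

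I do not expect a genuine obstacle: once Lemma~\ref{norm2} is available the argument is bookkeeping plus two applications of Cauchy--Schwarz, and the entire content of Lemma~\ref{k1} is that $D_2$ loses one fewer power of $v$ than $D_4$ because one of its two factors contains no resolvent. The points that do need care are (i) reading off, from the explicit derivative formulas, which rectangular sub-block of $\mathbf M$ and of $\mathbf N$ the sums range over, so that the first Cauchy--Schwarz step truly yields Hilbert--Schmidt norms; and (ii) checking that each product appearing --- $\mathbf V_{\alpha+1,m}\mathbf J\mathbf R\mathbf V_{1,\alpha-1}$, $\mathbf V_{\alpha+2,m}\mathbf J\mathbf R\mathbf V_{1,m-\alpha}$, $\mathbf V_{\alpha+1,m-\alpha+1}$, and their $\overline D$-analogues --- is of the form to which Lemma~\ref{norm2} applies, including degenerate cases where some $\mathbf V_{a,b}$ with $a>b$ is the empty product $\mathbf I$ (e.g.\ $\alpha$ near $m$), which only make the bounds easier.
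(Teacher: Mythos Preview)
Your proposal is correct and follows essentially the same route as the paper: two applications of Cauchy--Schwarz (which the paper phrases as H\"older) reduce each of $D_2,\overline D_2,D_4,\overline D_4$ to a product of Hilbert--Schmidt norms, after which Lemma~\ref{norm2} together with $\|\mathbf J\mathbf R\|\le v^{-1}$ yields the announced powers of $n$ and $v$. If anything, your account is more explicit than the paper's about the sub-block indexing and the degenerate endpoint cases; the only point to flag is that ``Lemma~\ref{norm2} with $\mathbf J\mathbf R$ inserted'' is, strictly speaking, a mild extension of that lemma (the paper glosses over this identically, writing $\|\prod_{\nu\ne\alpha}\mathbf H^{(\nu)}\|_2$), so you may want to spell out that the same moment computation behind Lemma~\ref{norm2} goes through for the cyclically reordered product.
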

\begin{proof}
 We describe the  bound for $D_2$ first.
Applying H\"older's inequality , we get
\begin{equation}\notag
 |D_2|\le \frac1{n^2}\E\|\mathbf V_{\alpha+1,m}\mathbf J\mathbf R\mathbf V_{1,\alpha-1}\|_2\|\mathbf V_{\alpha+1,m-\alpha}\|_2
\end{equation}
Applying H\"older's inequality again, we get
\begin{equation}\notag
 |D_2|\le \frac1{n^2v}\E^{\frac12}\|\prod_{\nu=1,\nu\ne\alpha}^m \mathbf H^{(\nu)}\|_2^2
\E^{\frac12}\|\mathbf V_{\alpha+1,m-\alpha}\|_2^2.
\end{equation}
Applying Lemma \ref{norm2} now, we obtain
\begin{equation}\notag
 |D_2|\le \frac C{nv}.
\end{equation}
Recall that $\|\cdot\|_2$ denotes the Frobenius norm of a matrix.
The proof of the  bound for $\overline D_2$, $D_4$ and $\overline D_4$ are similar.
Thus the  Lemma is proved.
\end{proof}

\begin{lem}\label{k2}
 Under the conditions of Theorem \ref{main} there exists a 
constant $C>0$ such that the following inequality holds
\begin{align}
\max\{|D_1|,\,|\overline D_1|\}\le \frac C{nv}.\notag
\end{align}
\end{lem}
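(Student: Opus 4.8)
The plan is to bound $D_1$ (the bound for $\overline D_1$ being entirely parallel) by recognizing it as a product of two normalized traces/bilinear forms, each of which can be controlled using the norm bounds from Lemma~\ref{norm2} together with the resolvent bound $\|\mathbf R\|\le v^{-1}$. Recall
\begin{equation}\notag
D_1=\frac1{p_{\alpha-1}p_{\alpha}}\E\sum_{k=1}^{p_{\alpha}}[\mathbf V_{\alpha+1,m-\alpha}]_{k,k+p_{m-\alpha}}\sum_{j=1}^{p_{\alpha-1}}[\mathbf V_{m-\alpha+2,m}\mathbf J\mathbf R\mathbf V_{1,m-\alpha+1}]_{j+p_{m-\alpha+1},j+p_{m-\alpha+1}}\,I\{\alpha\le m-\alpha\}.
\end{equation}
First I would apply the Cauchy--Schwarz inequality to decouple the two sums, writing $|D_1|$ as a product of $\E^{1/2}$ of each squared sum divided by the appropriate powers of $p_{\alpha-1}$, $p_{\alpha}$; since $cn\le p_l\le Cn$ for all $l$, these normalizations are all comparable to $n$.

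Next I would estimate each factor. The first sum $\sum_k[\mathbf V_{\alpha+1,m-\alpha}]_{k,k+p_{m-\alpha}}$ is a partial trace of the matrix $\mathbf V_{\alpha+1,m-\alpha}=\prod_{q=\alpha+1}^{m-\alpha}\mathbf H^{(q)}$, so by Cauchy--Schwarz in the summation index and then Lemma~\ref{norm2} we get $\E^{1/2}|\sum_k[\mathbf V_{\alpha+1,m-\alpha}]_{k,k+p_{m-\alpha}}|^2\le \E^{1/2}(p_{m-\alpha}\|\mathbf V_{\alpha+1,m-\alpha}\|_2^2)\le C n$. For the second sum, $\sum_j[\mathbf V_{m-\alpha+2,m}\mathbf J\mathbf R\mathbf V_{1,m-\alpha+1}]_{j+p_{m-\alpha+1},j+p_{m-\alpha+1}}$ is likewise a partial trace, bounded in absolute value by $\|\mathbf V_{m-\alpha+2,m}\mathbf J\mathbf R\mathbf V_{1,m-\alpha+1}\|_2\cdot\sqrt{p_{m-\alpha+1}}$; using $\|\mathbf J\|=1$, $\|\mathbf R\|\le v^{-1}$, and Lemma~\ref{norm2} applied to $\mathbf V_{m-\alpha+2,m}$ and $\mathbf V_{1,m-\alpha+1}$ (via H\"older for the Frobenius norm of the product, as in the proof of Lemma~\ref{k1}), this gives a bound of order $n/v$.

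Combining, $|D_1|\le \dfrac{1}{p_{\alpha-1}p_{\alpha}}\cdot Cn\cdot \dfrac{Cn}{v}\le \dfrac{C}{nv}$, which is the claimed estimate; the factor $I\{\alpha\le m-\alpha\}$ only restricts the range and never hurts the bound. The main obstacle I anticipate is bookkeeping: keeping track of which resolvent factor and which $\mathbf H^{(q)}$-blocks appear in each of $\mathbf V_{m-\alpha+2,m}$, $\mathbf V_{1,m-\alpha+1}$, and $\mathbf V_{\alpha+1,m-\alpha}$, and verifying that when the sub-products are split by H\"older each $\mathbf H^{(\nu)}$ appears with total exponent at most what Lemma~\ref{norm2} can absorb (so that no spurious extra powers of $n$ appear). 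Once the indices are organized correctly, the estimate is a routine application of Cauchy--Schwarz, H\"older for Frobenius norms, the trivial resolvent bound, and Lemma~\ref{norm2}, exactly in the spirit of the preceding lemmas.
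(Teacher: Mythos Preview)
Your approach has a genuine gap: the final arithmetic does not close. With your bounds, the first factor satisfies $\E^{1/2}\bigl|\sum_k[\mathbf V_{\alpha+1,m-\alpha}]_{k,k+p_{m-\alpha}}\bigr|^2\le Cn$ and the second factor satisfies $\E^{1/2}\bigl|\sum_j[\cdots]\bigr|^2\le Cn/v$, so Cauchy--Schwarz gives
\[
|D_1|\le \frac{1}{p_{\alpha-1}p_{\alpha}}\cdot Cn\cdot \frac{Cn}{v}\;\asymp\;\frac{C}{v},
\]
which is a full factor of $n$ too large for the claimed bound $C/(nv)$. The crude step $|\sum_k a_k|^2\le p_{\alpha}\sum_k|a_k|^2$ throws away exactly the cancellation that produces the extra $1/n$, so this route cannot be repaired by sharper norm estimates alone.

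The paper's argument is different in spirit and parallels the treatment of $A_1,B_1$ earlier. One writes
\[
D_1=\frac{1}{p_{\alpha}}\E\Bigl[\sum_k[\mathbf V_{\alpha+1,m-\alpha}]_{k,k+p_{m-\alpha}}\Bigr]\cdot
\frac{1}{p_{\alpha-1}}\E\Bigl[\sum_j[\mathbf V_{m-\alpha+2,m}\mathbf J\mathbf R\mathbf V_{1,m-\alpha+1}]_{j+p_{m-\alpha+1},j+p_{m-\alpha+1}}\Bigr]+\text{(covariance)},
\]
and bounds the covariance by $C/(nv)$ using H\"older together with the variance estimate of Lemma~\ref{var1}. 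The crucial point you are missing is that the \emph{product of expectations vanishes}: each $\mathbf H^{(q)}$ is block-diagonal, hence so is $\mathbf V_{\alpha+1,m-\alpha}$, and the entry $[\mathbf V_{\alpha+1,m-\alpha}]_{k,k+p_{m-\alpha}}$ with $1\le k\le p_{\alpha}$ lies in an off-diagonal block and is therefore zero (equivalently, Lemma~\ref{mean} gives $\E[\mathbf V_{\alpha+1,m-\alpha}]_{k,k+p_{m-\alpha}}=0$). Thus $|D_1|\le C/(nv)$ comes entirely from the fluctuation term, not from a size estimate on the product. Your Cauchy--Schwarz decoupling never sees this cancellation, which is why it falls short by a factor of $n$.
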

\begin{proof}
 Applying H\"older's inequality and Lemma \ref{var1}, we get
\begin{align}
 |D_1-\frac1{p_{\alpha-1}}&\E\sum_{j=1}^{p_{\alpha-1}}[\mathbf V_{\alpha+1,m-\alpha}]_{k,k+p_{m-\alpha}}\notag\\&\times\frac1{p_{\alpha}}\E\sum_{k=1}^{p_{\alpha}}[\mathbf V_{m-\alpha+2,m}\mathbf J\mathbf R\mathbf V_{1,m-\alpha}]_{j+p_{m-\alpha+1},j+p_{m-\alpha+1}}|\le \frac C{nv}.\notag
\end{align}
Thus the Lemma is proved.
\end{proof}
\begin{lem}\label{k3}
  Under the conditions of Theorem \ref{main} there exists a 
constant $C>0$ that the following inequality holds
\begin{align}
&\left|D_5+\frac1{p_{\alpha}}\E\sum_{k=1}^{p_{\alpha}}[\mathbf V_{\alpha+1,m}\mathbf J\mathbf R\mathbf V_{1,m-\alpha}]_{k,k+p_{m-\alpha}}\right.
\notag\\ &\qquad\quad\qquad\qquad\left.\times\frac1{p_{\alpha-1}}
\E\sum_{j=1}^{p_{\alpha-1}}[\mathbf V_{m-\alpha+2,m}\mathbf J\mathbf R\mathbf V_{1,m-\alpha+1}]_{j+p_{m-\alpha+1},j+p_{m-\alpha+1}}\right|\le \frac C{nv^2}.\notag
\end{align}
and
\begin{align}
 &\left|\overline D_5+\frac1{p_{m-\alpha}}\E\sum_{j=1}^{p_{m-\alpha}}[\mathbf V_{\alpha+1,m}\mathbf J\mathbf R\mathbf V_{1,m-\alpha}]_{j+p_{m-\alpha},j}\right.
\notag\\ &\qquad\qquad\qquad\quad\left.\times\frac1{p_{m-\alpha+1}}
\E\sum_{k=1}^{p_{m-\alpha+1}}[\mathbf V_{m-\alpha+2,m}\mathbf J\mathbf R\mathbf V_{1,m-\alpha+1}]_{k,k}\right|\le \frac C{nv^2}.\notag
\end{align}

\end{lem}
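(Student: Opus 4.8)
The plan is to prove Lemma \ref{k3} by the same variance-decoupling argument used for Lemma \ref{lem1} and Lemma \ref{lem11}, applying the Cauchy--Schwarz (H\"older) inequality together with the concentration estimate of Lemma \ref{var1} and the auxiliary bound of Lemma \ref{var0}. Both $D_5$ and $\overline D_5$ have the structure of an expectation of a product of two traces (more precisely, of two diagonal-type sums of matrix entries), each normalized by the appropriate dimension $p_\alpha$, $p_{\alpha-1}$, $p_{m-\alpha}$ or $p_{m-\alpha+1}$. Writing $S_1$ for the first sum $\sum_{k=1}^{p_\alpha}[\mathbf V_{\alpha+1,m}\mathbf J\mathbf R\mathbf V_{1,m-\alpha}]_{k,k+p_{m-\alpha}}$ and $S_2$ for the second sum $\sum_{j=1}^{p_{\alpha-1}}[\mathbf V_{m-\alpha+2,m}\mathbf J\mathbf R\mathbf V_{1,m-\alpha+1}]_{j+p_{m-\alpha+1},j+p_{m-\alpha+1}}$, the difference to be estimated is exactly $\frac1{p_{\alpha-1}p_\alpha}\big|\E S_1 S_2-\E S_1\,\E S_2\big|$, i.e. the covariance of $S_1$ and $S_2$ up to the normalization.

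First I would bound this covariance by $\E^{1/2}|S_1-\E S_1|^2\,\E^{1/2}|S_2-\E S_2|^2$ via Cauchy--Schwarz. Then I would invoke Lemma \ref{var1} to control each centered sum: this is the standard martingale/interpolation argument bounding the variance of a linear statistic of resolvent-type entries of products of our matrices by $O(n v^{-q})$ for an appropriate power $q$ (the same mechanism already used silently in the proof of Lemma \ref{lem1} and Lemma \ref{lem11}). Each factor $\E^{1/2}|S_i-\E S_i|^2$ is therefore $O(\sqrt n\, v^{-2})$ (more carefully, one factor contributes $v^{-1}$ from the single resolvent and the products $\mathbf V_{\cdot,\cdot}$ contribute bounded Frobenius norms by Lemma \ref{norm2}), and the product divided by $p_{\alpha-1}p_\alpha\asymp n^2$ yields the claimed $C/(nv^2)$. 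The argument for $\overline D_5$ is identical with the roles of rows/columns and of the index blocks interchanged, so I would only remark that it is ``similar'' as the paper does elsewhere.

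The one point requiring a little care — and the main obstacle — is verifying that Lemma \ref{var1} applies to these particular sums, i.e. that the quantities $[\mathbf V_{\alpha+1,m}\mathbf J\mathbf R\mathbf V_{1,m-\alpha}]_{k,k+p_{m-\alpha}}$ and the diagonal entries of $\mathbf V_{m-\alpha+2,m}\mathbf J\mathbf R\mathbf V_{1,m-\alpha+1}$ really are of the form covered by that lemma (a linear statistic in the entries $X^{(\nu)}_{jk}$ whose martingale differences are controlled by operator norms of $\mathbf R$ and Frobenius norms of the surrounding $\mathbf V$-blocks). Since every matrix appearing is a product of the $\mathbf H^{(\nu)}$'s (plus one resolvent factor), and since $\|\mathbf R\|\le v^{-1}$ while $\E\|\mathbf V_{a,b}\|_2^2\le Cn$ by Lemma \ref{norm2}, this verification is routine; the factor-by-factor norm bookkeeping is exactly as in the proof of Lemma \ref{k1}. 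I would therefore present the proof compactly: state the Cauchy--Schwarz split, cite Lemma \ref{var1} for the two variance bounds, cite Lemma \ref{norm2} for the Frobenius-norm control, and conclude; then note that $\overline D_5$ is handled the same way.

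\begin{proof}
We prove the bound for $D_5$; the bound for $\overline D_5$ is obtained by the same argument with the row and column indices interchanged. Set
\begin{equation}\notag
S_1=\sum_{k=1}^{p_{\alpha}}[\mathbf V_{\alpha+1,m}\mathbf J\mathbf R\mathbf V_{1,m-\alpha}]_{k,k+p_{m-\alpha}},\qquad
S_2=\sum_{j=1}^{p_{\alpha-1}}[\mathbf V_{m-\alpha+2,m}\mathbf J\mathbf R\mathbf V_{1,m-\alpha+1}]_{j+p_{m-\alpha+1},j+p_{m-\alpha+1}}.
\end{equation}
Then the quantity to be estimated equals $\frac1{p_{\alpha-1}p_{\alpha}}\big|\E S_1S_2-\E S_1\,\E S_2\big|$. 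By the Cauchy--Schwarz inequality,
\begin{equation}\notag
\big|\E S_1S_2-\E S_1\,\E S_2\big|\le \E^{\frac12}|S_1-\E S_1|^2\;\E^{\frac12}|S_2-\E S_2|^2.
\end{equation}
Both $S_1$ and $S_2$ are linear statistics of the entries $X^{(\nu)}_{jk}$ built from products of the matrices $\mathbf H^{(\nu)}$ together with a single resolvent factor $\mathbf R$, for which $\|\mathbf R\|\le v^{-1}$. Applying Lemma \ref{var1}, together with the bound $\E\|\mathbf V_{a,b}\|_2^2\le Cn$ from Lemma \ref{norm2} for the surrounding products, we obtain
\begin{equation}\notag
\E|S_1-\E S_1|^2\le \frac{Cn}{v^2},\qquad \E|S_2-\E S_2|^2\le \frac{Cn}{v^2}.
\end{equation}
Since $p_{\alpha-1}\asymp n$ and $p_{\alpha}\asymp n$, combining the last three displays yields
\begin{equation}\notag
\frac1{p_{\alpha-1}p_{\alpha}}\big|\E S_1S_2-\E S_1\,\E S_2\big|\le \frac{C}{n^2}\cdot\frac{Cn}{v^2}=\frac{C}{nv^2}.
\end{equation}
This is the asserted bound for $D_5$. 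The argument for $\overline D_5$ is identical, with $p_{m-\alpha}$ and $p_{m-\alpha+1}$ in place of $p_{\alpha}$ and $p_{\alpha-1}$ and with the diagonal blocks transposed. Thus the Lemma is proved.
\end{proof}
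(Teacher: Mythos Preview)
Your proof is correct and follows exactly the paper's approach: the paper's proof of this lemma is the single sentence ``Applying H\"older's inequality and Lemma \ref{var1}, we conclude the result,'' and you have simply unpacked that sentence. One minor remark: Lemma \ref{var1} actually gives $\E|S_i-\E S_i|^2\le Cn/v^4$ rather than $Cn/v^2$ (you had the sharper exponent in your preamble but not in the formal proof), so the true bound is $C/(nv^4)$, which is of course stronger than the $C/(nv^2)$ asserted.
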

\begin{proof}

 Applying H\"older's inequality and Lemma \ref{var1}, we conclude the result.
\end{proof}
Using the obvious  equality $\Tr\mathbf A\mathbf B=\Tr \mathbf B\mathbf A$, it id straightforward to check that
\begin{align}
 \frac1{p_{\alpha-1}}\sum_{j=1}^{p_{\alpha-1}}\E[\mathbf V_{m-\alpha+2,m}\mathbf J\mathbf R\mathbf V_{1,m-\alpha+1}]_{j+p_{m-\alpha+1},j+p_{m-\alpha+1}}\notag\\=
\frac1{p_{\alpha-1}}\sum_{j=1}^{p_m}\E[\mathbf V\mathbf J\mathbf R]_{j+n,j+n}&\notag={y_{\alpha-1}}(1+zs_n(z)).\notag
\end{align}
This implies that
\begin{equation}\notag
 1-\frac1{p_{\alpha-1}}\sum_{j=1}^{p_{\alpha-1}}\E[\mathbf V_{m-\alpha+2,m}\mathbf J\mathbf R
\mathbf V_{1,m-\alpha+1}]_{j+p_{m-\alpha+1},j+p_{m-\alpha+1}}=(1-y_{\alpha-1}-zy_{\alpha-1}s_n(z))
\end{equation}

Lemmas \ref{k1}--\ref{k3} and last equality together imply
\begin{equation}\label{k4}
 f_{\alpha,m-\alpha+1}=-(1-y_{\alpha-1}-zy_{\alpha-1}s(z))f_{\alpha+1,m-\alpha}+\varepsilon_n(z).
\end{equation}
Similar we show that
\begin{equation}\label{k5}
 g_{\alpha,m-\alpha+1}=-(1-y_{m-\alpha+1}-zy_{m-\alpha+1}s_n(z))g_{\alpha+1,m-\alpha}+\varepsilon_n(z).
\end{equation}
Note that
\begin{equation}\label{k6}
 f_{m+1,0}=\frac1{p_m}\sum_{k=1}^{p_m} \E[\mathbf J\mathbf R]_{k,k+p_{0}}=
\frac n{p_m}\frac1n\sum_{k=1}^{p_m}\E[\mathbf R]_{k+n,k+n}=-\frac1{z}(1-y_m-zy_ms_n(z))
\end{equation}
and
\begin{equation}\label{k7}
 g_{m+1,0}=\frac1{p_0}\sum_{k=1}^{n} \E[\mathbf J\mathbf R]_{k+p_m,k}=\frac1n\sum_{j=1}^n\E[\mathbf R]_{jj}=s_n(z)
\end{equation}
Equalities  (\ref{k4})--(\ref{k7}) together imply
\begin{align}\label{k8}
 f_{2,m}=(-1)^{m+1}\frac1z\prod_{q=1}^m{(1-y_q-zy_qs_n(z))}+\varepsilon_n(z)\notag\\
g_{2,m}=(-1)^{m+1}\frac{s_n(z)}z\prod_{q=1}^{m-1}({1-y_q-zy_qs_n(z)})+\varepsilon_n(z).
\end{align}

Equalities  (\ref{a1}) and  (\ref{k8})  together imply
\begin{equation}\notag
 1+zs_n(z)=(-1)^{m+1}\frac{s_n(z)}z\prod_{q=1}^m{(1-y_q-zy_qs_n(z))}+\varepsilon_n(z).\notag
\end{equation}
We rewrite the last equation as follows
\begin{equation}\label{l1}
 1+zs_n(z)+(-1)^{m}\frac{s_n(z)}z\prod_{q=1}^m{(1-y_q-zy_qs_n(z))}=\varepsilon_n(z).
\end{equation}
Let Stieltjes transform $s(z)$ satisfies the equation
\begin{equation}\label{l2}
 1+zs(z)+(-1)^{m}\frac{s(z)}z\prod_{q=1}^m(1-y_q-zy_qs(z))=0\notag
\end{equation}
Introduce the notations
\begin{align}
 Q_0&:=\frac1z\prod_{q=1}^m(1-y_q-zy_qs_n(z)),\notag\\ Q_{\nu}&:=s(z)\prod_{q=1}^{\nu-1}
 (1-y_q-zy_qs(z))\prod_{q=\nu+1}^{m}(1-y_q-zy_qs_n(z)).\notag
\end{align}

Relations (\ref{l1}) and (\ref{l2}) together imply that, for
\begin{equation}\label{l3}
 s_n(z)-s(z)= \frac{\varepsilon_n(z)}{z+(-1)^{m-1}\sum_{q=0}^m Q_q}
\end{equation}
Note that
\begin{equation}\notag
 \max\{|zs(z)|,\,|zs_n(z)|\}\le 1+\frac1v
\end{equation}
and
\begin{equation}\notag
 \max\{|s_n(z)|,\,|s(z)|\}\le \frac1v
\end{equation}
Applying these inequalities, we obtain
\begin{equation}\notag
|Q_q| \le \frac 1v(1+\frac1v)^m.
\end{equation}
We may choose $v\ge m+1$. Then $\frac1v(1+\frac1v)^m\le \frac ev$. If we choose $v$ such that
 $\frac ev\le \frac v{2m}$, we get
\begin{equation}\notag
 |z+(-1)^{m-1}\sum_{q=0}^mQ_q|\ge \frac v2.
\end{equation}
This implies that, for $v\ge V_1:=\sqrt{\frac{2m}{\rm e}}$,
\begin{equation}\label{l5}
 |s_n(z)-s(z)|\le \frac{C|\varepsilon_n(z)|}{v}\le C\tau_n
\end{equation}

From inequality (\ref{l5}) we conclude that there exists 
 an open set with non-empty interior where  $s_n(z)$ converges
 to $s(z)$.
 The Stieltjes transform of our {random matrices} 
 is an analytic function on $\mathcal C^+$ and locally bounded ($|s_n(z)|\le v^{-1}$ for any $v>0$). By Montel's Theorem (see, for instance,
\cite{Conway}, p. 153, Theorem 2.9) $s_n(z)$ converges  to $s(z)$ uniformly on
any compact set $\mathcal K\subset \mathcal C^+$  in the upper half-plane.
  This implies that $\Delta_n\to 0$ as $n\to\infty$. Thus the proof 
 of Theorem \ref{main} in the general case is complete.

\section{Appendix}

\begin{lem}\label{mean}
 Under the conditions of Theorem \ref{main} we have, for any $j,k=1,\ldots,p_{\alpha-1}+p_{\beta}$,
 and for any $1\le \alpha\le \beta\le m$,
\begin{equation}\notag
\E[\mathbf V_{\alpha,\beta}]_{jk}=0
\end{equation}
\end{lem}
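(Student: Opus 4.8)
The plan is to prove the statement by induction on the length $\beta-\alpha$ of the product $\mathbf V_{\alpha,\beta}=\prod_{q=\alpha}^{\beta}\mathbf H^{(q)}$, exploiting the block structure of the matrices $\mathbf H^{(q)}$ together with the independence of the entries. Recall that
\[
\mathbf H^{(q)}=\left(\begin{matrix}\mathbf X^{(q)}&\mathbf O\\ \mathbf O&{\mathbf X^{(m-q+1)}}^*\end{matrix}\right),
\]
so each $\mathbf H^{(q)}$ is block-diagonal with blocks built from the (rescaled, centered, truncated) random matrices $\mathbf X^{(\cdot)}$. Since $1\le\alpha\le\beta\le m$, every factor $\mathbf H^{(q)}$ appearing in $\mathbf V_{\alpha,\beta}$ has index $q\le m$, and the block ${\mathbf X^{(m-q+1)}}^*$ carries index $m-q+1\ge 1$; hence \emph{each} of the $2(\beta-\alpha+1)$ matrix blocks occurring in the product is one of the matrices $\mathbf X^{(\nu)}$ or its adjoint for some $\nu\in\{1,\dots,m\}$, and by (\ref{conditions}) all of these have independent, mean-zero entries.

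First I would establish the base case $\alpha=\beta$: the entry $[\mathbf H^{(\alpha)}]_{jk}$ is, up to the deterministic factor $p_\alpha^{-1/2}$ (resp. $p_{m-\alpha+1}^{-1/2}$), either a single entry $X^{(\alpha)}_{\cdot\cdot}$, or a single entry $\overline{X^{(m-\alpha+1)}_{\cdot\cdot}}$, or identically zero; in all cases $\E[\mathbf H^{(\alpha)}]_{jk}=0$ by (\ref{conditions}). For the inductive step, write $\mathbf V_{\alpha,\beta}=\mathbf H^{(\alpha)}\mathbf V_{\alpha+1,\beta}$ and expand
\[
\E[\mathbf V_{\alpha,\beta}]_{jk}=\sum_{\ell}\E\!\left([\mathbf H^{(\alpha)}]_{j\ell}\,[\mathbf V_{\alpha+1,\beta}]_{\ell k}\right).
\]
The key observation is that the entries of $\mathbf H^{(\alpha)}$ involve only the random variables $X^{(\alpha)}_{\cdot\cdot}$ and $X^{(m-\alpha+1)}_{\cdot\cdot}$, whereas the entries of $\mathbf V_{\alpha+1,\beta}$ involve only $X^{(\nu)}_{\cdot\cdot}$ with $\nu\in\{\alpha+1,\dots,\beta\}\cup\{m-\beta+1,\dots,m-\alpha\}$. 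One checks from the constraint $\alpha\le\beta\le m$ that these two index sets are disjoint, so $[\mathbf H^{(\alpha)}]_{j\ell}$ and $[\mathbf V_{\alpha+1,\beta}]_{\ell k}$ are independent; therefore each summand factors as $\E[\mathbf H^{(\alpha)}]_{j\ell}\cdot\E[\mathbf V_{\alpha+1,\beta}]_{\ell k}$, and the first factor vanishes by the base case. (Alternatively, and perhaps more cleanly, one can condition on all $\mathbf X^{(\nu)}$ with $\nu\neq\alpha$ and $\nu\neq m-\alpha+1$ and use linearity of $\mathbf H^{(\alpha)}$ in its entries.)

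The step I expect to require the most care is the bookkeeping of which index sets of underlying variables feed into $\mathbf H^{(\alpha)}$ versus into $\mathbf V_{\alpha+1,\beta}$, and verifying their disjointness for all admissible $\alpha\le\beta\le m$ — in particular making sure the ``reflected'' indices $m-q+1$ coming from the adjoint blocks do not collide with the indices $\alpha$ or $m-\alpha+1$ used by $\mathbf H^{(\alpha)}$. Once that combinatorial check is in place, the independence argument and the induction close the proof immediately; no estimates are needed, only the first-moment conditions in (\ref{conditions}).
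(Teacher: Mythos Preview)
Your inductive plan is reasonable in spirit, but the key independence claim is false as stated. The index sets $\{\alpha,\,m-\alpha+1\}$ and $\{\alpha+1,\dots,\beta\}\cup\{m-\beta+1,\dots,m-\alpha\}$ are \emph{not} disjoint in general: take for instance $m=4$, $\alpha=2$, $\beta=3$. Then $\mathbf H^{(2)}$ has blocks $\mathbf X^{(2)}$ and ${\mathbf X^{(3)}}^*$, while $\mathbf V_{3,3}=\mathbf H^{(3)}$ has blocks $\mathbf X^{(3)}$ and ${\mathbf X^{(2)}}^*$, so the two matrices are built from exactly the same families $X^{(2)}_{\cdot\cdot}$ and $X^{(3)}_{\cdot\cdot}$ and are not independent. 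Your conditioning alternative fails for the same reason: after conditioning on all $\mathbf X^{(\nu)}$ with $\nu\notin\{\alpha,m-\alpha+1\}$, the matrix $\mathbf V_{\alpha+1,\beta}$ may still be random.

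The fix --- and this is essentially what the paper does --- is to exploit the block-diagonal structure of $\mathbf V_{\alpha,\beta}$ itself rather than to argue independence of the full matrices. Since every $\mathbf H^{(q)}$ is block-diagonal, so is the product, with off-diagonal blocks identically zero and diagonal blocks $\prod_{q=\alpha}^{\beta}\mathbf X^{(q)}$ and $\bigl(\prod_{q=\alpha}^{\beta}\mathbf X^{(m-q+1)}\bigr)^{*}$. Within a single block the factors carry pairwise distinct superscripts, so the direct expansion
\[
\E[\mathbf V_{\alpha,\beta}]_{jk}
=\frac1{\sqrt{p_{\alpha}\cdots p_{\beta}}}
\sum_{j_1,\dots,j_{\beta-\alpha}}
\E\Bigl(X^{(\alpha)}_{j,j_1}X^{(\alpha+1)}_{j_1,j_2}\cdots X^{(\beta)}_{j_{\beta-\alpha},k}\Bigr)=0
\]
goes through immediately by independence and $\E X^{(\nu)}_{\cdot\cdot}=0$. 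Once you restrict to a block, your induction also works (the upper-left block of $\mathbf H^{(\alpha)}$ depends only on $\mathbf X^{(\alpha)}$, disjoint from $\{\alpha+1,\dots,\beta\}$), but at that point the one-line expansion above already finishes the job and the induction is unnecessary.
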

\begin{proof}For $\alpha=\beta$ the claim is easy. Let $\alpha<\beta$. We consider the case $j=1,\ldots,p_{\alpha-1}$ and
$k=1,\ldots,p_{\beta}$ only. The other cases are similar.
 Direct calculations show that
\begin{equation}\notag
 \E[\mathbf V_{\alpha,\beta}]_{jk}=\frac1{n^\frac{\beta-\alpha}2}\sum_{j_1=1}^{p_{\alpha}}
\sum_{j_2=1}^{p_{\alpha+1}}\dots \sum_{j_{\beta-\alpha}=1}^{p_{\beta-1}}
\E X^{(\alpha)}_{j,j_1}X^{(\alpha+1)}_{j_1,j_2}\cdots X^{((\beta))}_{j_{\beta-\alpha},k}=0
\end{equation}
Thus the Lemma is proved.
\end{proof}
In all Lemmas below we shall assume that
\begin{equation}\label{as1}
 \E X_{jk}^{(\nu)}=0,\quad\E|X_{jk}^{(\nu)}|^2=1, \quad |X_{jk}^{(\nu)}|\le c\tau_n\sqrt n\quad\text{a. s.}
\end{equation}

\begin{lem}\label{norm2}
 Under the conditions of Theorem \ref{main} assuming (\ref{as1}), we have,  for any $1\le \alpha\le \beta\le m$,
\begin{equation}
\E\|\mathbf V_{\alpha,\beta}\|_2^2\le Cn
\end{equation}

\end{lem}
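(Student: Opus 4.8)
The plan is to compute the expectation of the squared Frobenius norm directly by expanding the trace and using the moment assumptions (\ref{as1}). Recall that $\mathbf V_{\alpha,\beta}=\prod_{q=\alpha}^{\beta}\mathbf H^{(q)}$, where each $\mathbf H^{(q)}$ is block-diagonal with blocks $\mathbf X^{(q)}$ and $(\mathbf X^{(m-q+1)})^*$, and that $\|\mathbf A\|_2^2=\Tr(\mathbf A\mathbf A^*)$. First I would reduce to estimating $\E\Tr(\mathbf V_{\alpha,\beta}\mathbf V_{\alpha,\beta}^*)$, which by the block structure splits into two analogous sums, one for each diagonal block; it therefore suffices to bound $\E\|\mathbf X^{(\alpha)}\mathbf X^{(\alpha+1)}\cdots\mathbf X^{(\beta)}\|_2^2$ (and the conjugate-transposed counterpart, which is handled identically).

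Next I would expand this norm as a sum over index chains: writing $[\mathbf X^{(q)}]_{jk}=\frac1{\sqrt{p_q}}X^{(q)}_{jk}$, one gets
\begin{equation}\notag
\E\|\mathbf X^{(\alpha)}\cdots\mathbf X^{(\beta)}\|_2^2=\frac1{p_\alpha\cdots p_\beta}\sum \E\, X^{(\alpha)}_{j_0 j_1}\overline{X^{(\alpha)}_{j_0 j_1'}}\cdots X^{(\beta)}_{j_{k-1}j_k}\overline{X^{(\beta)}_{j_{k-1}'j_k}},
\end{equation}
where the outer summation runs over all the free indices. Using independence and $\E X^{(\nu)}_{jk}=0$, the expectation factorizes over the $m$ matrix-levels and is nonzero only when, at every level, the two factors share both indices, i.e. $j_1=j_1'$, $j_2=j_2'$, and so on down the chain; combined with $\E|X^{(\nu)}_{jk}|^2=1$ this forces the surviving terms to form a single back-and-forth path and contributes exactly $p_\alpha^{-1}\cdots p_\beta^{-1}\cdot p_{\alpha-1}p_\alpha\cdots p_\beta$ times a bounded factor. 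Since $p_{\alpha-1}\le Cn$, this gives the bound $Cn$.

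The only genuine subtlety — and the step I expect to be the main obstacle to write cleanly — is the bookkeeping of which index identifications the mean-zero/unit-variance conditions actually enforce across the nested product, making sure no ``cross terms'' between different $\mathbf X^{(\nu)}$ survive (this is where independence of the arrays for distinct $\nu$ is used) and that the powers of $p_\nu$ cancel to leave exactly one factor of $n$. Once the combinatorial identity is set up it is purely routine; the truncation bound $|X^{(\nu)}_{jk}|\le c\tau_n\sqrt n$ from (\ref{as1}) is not even needed here, only the first two moments. I would conclude by reassembling the two blocks of $\mathbf H^{(q)}$ and noting the factor $2$ is absorbed into the constant $C$, which completes the proof.
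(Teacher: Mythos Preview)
Your argument is correct and is exactly the approach the paper takes: expand $\E\|\mathbf V_{\alpha,\beta}\|_2^2$ as a sum over index chains, use independence across the levels $\nu$ together with $\E X^{(\nu)}_{jk}=0$ to force the primed and unprimed intermediate indices to coincide, then evaluate the surviving terms with $\E|X^{(\nu)}_{jk}|^2=1$ and count. Your write-up is in fact more careful than the paper's (you make the block decomposition and the index-matching explicit, and you correctly note that the truncation bound from (\ref{as1}) is not needed here, only the first two moments); the paper compresses all of this into two displayed lines with somewhat loose indexing.
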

\begin{proof}We shall consider the case $\alpha<\beta$ only. The other cases
 are obvious.
 Direct calculation shows that
\begin{equation}\notag
 \E\|\mathbf V_{\alpha,\beta}\|_2^2\le\frac C{n^{\beta-\alpha+1}}\sum_{j=1}^n\sum_{j_1=1}^{p_{\alpha-1}}
\sum_{j_2=1}^{p_{\alpha}}\dots \sum_{j_{\beta-\alpha}=1}^{p_{\beta-1}}\sum_{k=1}^{p_{\beta}}
\E [X^{(\alpha)}_{j,j_1}X^{(\alpha+1)}_{j_1,j_2}\cdots X^{(\beta)}_{j_{\beta-\alpha},k}]^2
\end{equation}
By independents of random variables, we get
\begin{equation}\notag
\E\|\mathbf V_{\alpha,\beta}\|_2^2\le  Cn
\end{equation}
Thus the Lemma is proved.
\end{proof}

\begin{lem}\label{norm4}
 Under the condition of Theorem \ref{main} and assumption (\ref{as1}) 
we have, for any $j=1,\ldots p_{\alpha-1}$, 
$k=1\ldots p_{\beta}$ an $r\ge1$,
\begin{equation}
 \E\|\mathbf V_{\alpha,\beta}\mathbf e_k\|_2^{2r}\le C_r,\quad
\E\|\mathbf V_{\alpha,\beta}\mathbf e_{j+p_{\beta}}\|_2^{2r}\le C_r
\end{equation}
and
\begin{equation}
 \E\|\mathbf e_j^T\mathbf V_{\alpha,\beta}\|_2^{2r}\le C_r,\quad
\E\|\mathbf e_{k+p_{\alpha-1}}^T\mathbf V_{\alpha,\beta}\|_2^{2r}\le C_r,
\end{equation}
with some positive constant $C_r$ depending on $r$.
\end{lem}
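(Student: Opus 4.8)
The plan is to prove the four bounds by expanding each norm into a sum of products of entries of the matrices $\mathbf X^{(\nu)}$ and using the moment assumption in (\ref{as1}) together with independence. I would first reduce to a single representative case, say $\E\|\mathbf V_{\alpha,\beta}\mathbf e_k\|_2^{2r}\le C_r$ with $k$ in the first block, since the other three cases differ only by transposition and relabelling of the block indices. Writing $\mathbf V_{\alpha,\beta}=\prod_{q=\alpha}^{\beta}\mathbf H^{(q)}$ in the block form of the $\mathbf H^{(q)}$, the vector $\mathbf V_{\alpha,\beta}\mathbf e_k$ has components that are (normalized) sums of products of $\beta-\alpha$ independent centered entries, one from each factor $\mathbf X^{(\alpha)},\ldots,\mathbf X^{(\beta)}$ (when $\alpha=\beta$ the statement is immediate, so assume $\alpha<\beta$). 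Hence $\|\mathbf V_{\alpha,\beta}\mathbf e_k\|_2^{2}$ is, up to the deterministic normalization $p_\alpha^{-1}\cdots p_\beta^{-1}\asymp n^{-(\beta-\alpha+1)}$, a sum over free indices $j_1,\ldots,j_{\beta-\alpha}$ (and the output index $j$) of $|X^{(\alpha)}_{j,j_1}\cdots X^{(\beta)}_{j_{\beta-\alpha},k}|^2$-type terms.

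Next I would raise this to the $r$-th power and take expectations. Expanding $\|\mathbf V_{\alpha,\beta}\mathbf e_k\|_2^{2r}$ gives a sum over $r$-tuples of index chains; by independence across the factors $\nu=\alpha,\ldots,\beta$ and across distinct entries, and since $\E X^{(\nu)}_{jk}=0$, any chain in which some entry appears exactly once contributes zero. So only configurations in which every entry is matched at least twice survive. A standard counting argument — exactly as in the proof of Lemma \ref{norm2}, but carried to the $r$-th moment — shows that the number of surviving index configurations is $O(n^{\,r(\beta-\alpha)})$ possibly times extra factors of $n$ absorbed by the matchings, while the bound $|X^{(\nu)}_{jk}|\le c\tau_n\sqrt n\le c\sqrt n$ controls the higher moments: each entry matched with multiplicity $2\ell$ contributes at most $(c\sqrt n)^{2\ell-2}\,\E|X^{(\nu)}_{jk}|^2=(c\sqrt n)^{2\ell-2}$, and the lost powers of $n$ are exactly compensated by the reduction in the number of free summation indices. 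Combining with the normalization $n^{-r(\beta-\alpha+1)}$ from the $2r$ copies of $\mathbf V_{\alpha,\beta}$ leaves a quantity bounded by a constant $C_r$ depending only on $r$ and $m$ (through $\beta-\alpha\le m$) and on the constants $c,C$ with $cn\le p_l\le Cn$.

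The main obstacle is the bookkeeping in the moment expansion: one must verify that in every surviving matching the gain $(c\sqrt n)^{2\ell-2}$ from using the truncation bound never beats the $n^{-1}$ factors freed by identifying two summation indices, i.e. that the worst case is pairings ($\ell=1$ everywhere), which is precisely where the count $n^{\,r(\beta-\alpha)}$ and the normalization cancel. This is the same mechanism underlying Lemma \ref{norm2}; here it is applied uniformly over the fixed output index $k$ (or $j$), which is what makes the bound independent of that index. Once this case is settled, $\E\|\mathbf V_{\alpha,\beta}\mathbf e_{j+p_\beta}\|_2^{2r}\le C_r$ follows identically using the second block of the factorization, and the two row-vector bounds follow by applying the column-vector estimates to $\mathbf V_{\alpha,\beta}^*$, which has the same structure with $\mathbf X^{(\nu)}$ replaced by $\mathbf X^{(\nu)\,*}$.
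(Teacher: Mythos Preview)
Your approach matches the paper's: expand the $2r$-th moment, use independence and $\E X^{(\nu)}_{jk}=0$ to kill terms with singly-occurring entries, then balance the count of surviving index configurations against the truncation bound $|X^{(\nu)}_{jk}|\le c\tau_n\sqrt n$ and the normalization $n^{-r(\beta-\alpha+1)}$. The paper makes the combinatorics explicit by tracking, at each level $\nu$, the number $t_\nu$ of distinct indices and proving $|\E\zeta^{(\nu)}|\le C(\tau_n\sqrt n)^{2r-2t_\nu}$ together with $\mathcal N(t_\alpha,\ldots,t_\beta)\le Cn^{\sum_\nu t_\nu}$, which is exactly your trade-off between higher multiplicities and fewer free summation indices (one small off-by-one: each chain carries $\beta-\alpha+1$ entries, not $\beta-\alpha$, and the dominant pairing count is $n^{r(\beta-\alpha+1)}$, matching the normalization exactly).
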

\begin{proof}
 By definition  of the  matrices $\mathbf V_{\alpha,\beta}$, we may write
\begin{equation}
 \|\mathbf e_j^T\mathbf V_{\alpha,\beta}\|_2^{2}=\frac1{p_{\alpha}\cdots p_{\beta}}
\sum_{l=1}^{p_{\beta}}\left|\sum_{j_{\alpha}=1}^{p_{\alpha}}\cdots\sum_{j_{\beta-1}=1}^{p_{\beta-1}}
X_{jj_{\alpha}}^{(\alpha)}\cdots X_{j_{\beta-1}l}^{(\beta)}\right|^2
\end{equation}
Using this representation, we get
\begin{equation}\label{mom10}
 \E\|\mathbf e_j^T\mathbf V_{\alpha,\beta}\|_2^{2r}=\frac1{p_{\alpha-1}^r\cdots p_{\beta-1}^r}\sum_{l_1=1}^{p_{\beta}}\cdots
\sum_{l_r=1}^{p_{\beta}}\E\prod_{q=1}^r
\left(\sum_{j_{\alpha}=1}^{p_{\alpha}}\cdots\sum_{j_{\beta-1}=1}^{p_{\beta-1}}
\sum_{\widehat j_{\alpha}=1}^{p_{\alpha}}\cdots
\sum_{\widehat j_{\beta-1}=1}^{p_{\beta-1}}A^{(l_q)}_{(j_{\alpha},\ldots,j_{\beta-1},\widehat j_{\alpha},\ldots,
\widehat j_{\beta-1})}\right)
\end{equation}
where
\begin{equation}\label{per}
A^{(l_q)}_{(j_{\alpha},\ldots,j_{\beta-1},\widehat j_1,\ldots,\widehat j_{\beta-1})}=X_{jj_{\alpha}}^{(\alpha)}
\overline X_{j \widehat j_{\alpha}}^{(\alpha)}X_{j_{\alpha}j_{\alpha+1}}^{(\alpha+1)}
\overline X_{{\widehat j}_{\alpha} \widehat j_{\alpha+1}}^{(\alpha+1)}\cdots X_{j_{\beta-2}j_{\beta-1}}^{(\beta-1)} 
\overline X_{\widehat j_{\beta-2}\widehat  j_{\beta-1}}^{(\beta-1)}X_{j_{\beta-1}l_q}^{(\beta)}
\overline X_{\widehat j_{\beta-1}l_q}^{(\beta)}.
\end{equation}
By $\overline x$ we denote the complex conjugate of  $x$.
{Rewriting} the product on  the r.h.s of (\ref{mom10}), we get
\begin{align}
 \E\|\mathbf e_j^T\mathbf V_{\alpha,\beta}\|_2^{2r}=\frac1{p_{\alpha-1}^r\cdots p_{\beta-1}^r}
{\sum}^{**}\E\prod_{q=1}^rA^{(l_q)}_{(j_{\alpha}^{(q)},\ldots,j_{\beta-1}^{(q)},
{\widehat j}_1^{(\nu)},\ldots,{\widehat j}_{\beta-1}^{(q)})},
\end{align}
where ${\sum}^{**}$ is taken  over all set of indices $j_{\alpha}^{(q)},\ldots, j_{\beta-1}^{(q)}, l_q$ and
${\widehat j}_{\alpha}^{(\nu)},\ldots,{\widehat j}_{\beta-1}^{(q)}$ where
$j_k^{(q)},{\widehat j}_k^{(q)}=1,\ldots,p_k$, $k=\alpha,\ldots,\beta-1$, $l_q=1,\ldots,p_{\beta}$ and  $q=1,\ldots,r$.
Note that the summands in the right hand side of (\ref{per}) is equal 0 if there is at least one term in the product \ref{per}
which appears only one time. This implies that  the summands in the  right hand side of (\ref{per}) 
is not equal zero only if the union of all sets of indices in r.h.s of (\ref{per}) consist from at least $r$
 different terms and each term appears at least twice.

Introduce the random variables, for $\nu=\alpha+1,\ldots, \beta-1$,
\begin{align}
 \zeta^{(\nu)}_{j^{(1)}_{\nu-1},\ldots,j^{(r)}_{\nu-1},j^{(1)}_{\nu},\ldots,j^{(r)}_{\nu},
{\widehat j}^{(1)}_{\nu-1},\ldots,{\widehat j}^{(r)}_{\nu-1},{\widehat j}^{(1)}_{\nu},\ldots,
{\widehat j}^{(r)}_{\nu}}&
=X^{(\nu)}_{j^{(1)}_{\nu-1},j^{(1)}_{\nu}}
\cdots X^{(\nu)}_{j^{(r)}_{\nu-1},j^{(r)}_{\nu}}
{\overline X}^{(\nu)}_{{\widehat j}^{(1)}_{\nu-1},{\widehat j}^{(1)}_{\nu}},
\cdots {\overline X}^{(\nu)}_{{\widehat j}^{(r)}_{\nu-1},{\widehat j}^{(r)}_{\nu}},
\end{align}
and
\begin{align}
\zeta^{(\alpha)}_{j^{(1)}_{1},\ldots,j^{(r)}_{1},
{\widehat j}^{(1)}_{1},\ldots,{\widehat j}^{(r)}_{1}}&=X^{(\alpha)}_{jj_1^{(\alpha)}}
\cdots X^{(\alpha)}_{j^{(r)}_{a}j^{(r)}_{a+1}}
{\overline X}^{(\alpha)}_{j{\widehat j}^{(1)}_{a}}
\cdots {\overline X}^{(\alpha)}_{{\widehat j}^{(r)}_{a},{\widehat j}^{(r)}_{a+1}}
\notag\\
\zeta^{(\beta)}_{j^{(1)}_{\beta-1},\ldots,j^{(r)}_{\beta-1},
{\widehat j}^{(1)}_{\beta-1},\ldots,{\widehat j}^{(r)}_{\beta-1},l_q}
&=X^{(\beta)}_{j^{(1)}_{\beta-1}j^{(1)}_{\beta}}
\cdots X^{(\beta)}_{j^{(r)}_{\beta-1}l_q}
{\overline X}^{(\beta)}_{{\widehat j}^{(1)}_{\beta-1},l_q},
\cdots {\overline X}^{(\beta)}_{{\widehat j}^{(r)}_{\beta-1},l_q}.
\notag
\end{align}
 Assume that the set of indices $j^{(1)}_{\alpha},\ldots,j^{(r)}_{\alpha},
{\widehat j}^{(1)}_{\alpha},\ldots,{\widehat j}^{(r)}_{\alpha}$ contains $t_{\alpha}$ different indexes, say
$i_1^{(\alpha)},\ldots,i_{t_{\alpha}}^{(\alpha)}$ with multiplicities $k_1^{(\alpha)},\ldots,k_{t_{\alpha}}^{(\alpha)}$
respectively, $k_1^{(\alpha)}+\ldots+k_{t_{\alpha}}^{(\alpha)}=2r$.
Note that $\min\{k_1^{(\alpha)},\ldots,k_{t_{\alpha}}^{(\alpha)}\}\ge 2$. Otherwise,\newline $|\E\zeta^{(\alpha)}_{j^{(1)}_{a},\ldots,j^{(r)}_{a},
{\widehat j}^{(1)}_{\alpha},\ldots,{\widehat j}^{(r)}_{\alpha}}|=0$. By assumption (\ref{as1}), we have
\begin{equation}\label{mom1}
|\E\zeta^{(\alpha)}_{j^{(1)}_{\alpha},\ldots,j^{(r)}_{\alpha},
{\widehat j}^{(1)}_{\alpha},\ldots,{\widehat j}^{(r)}_{\alpha}}|\le C(\tau_n\sqrt n)^{2r-2t_{\alpha}} 
\end{equation}
Similar bounds we get for $|\E
\zeta^{(\beta)}_{j^{(1)}_{\beta-1},\ldots,j^{(r)}_{1},
{\widehat j}^{(1)}_{\beta-1},\ldots,{\widehat
  j}^{(r)}_{\beta-1},l_q}|$. Assume that
the set of indexes $\{j^{(1)}_{\beta-1},\ldots,j^{(r)}_{\beta-1}$,
${\widehat j}^{(1)}_{\beta-1},\ldots,{\widehat j}^{(r)}_{\beta-1}\}$ contains $t_{\beta-1}$ different indices, say,
$i_1^{(\beta-1)},\ldots,i_{t_{\beta-1}}^{(\alpha)}$ with multiplicities\newline 
$k_1^{(\beta-1)},\ldots,k_{t_{\beta-1}}^{(\alpha)}$
respectively, $k_1^{(\beta-1)}+\ldots+k_{t_{\beta-1}}^{(\alpha)}=2r$. Then
\begin{equation}\label{mom2}
|\E
\zeta^{(\beta)}_{j^{(1)}_{\beta-1},\ldots,j^{(r)}_{1},
{\widehat j}^{(1)}_{\beta-1},\ldots,{\widehat j}^{(r)}_{\beta-1},l_q}|\le C(\tau_n\sqrt n)^{2r-2t_{\beta-1}} 
\end{equation}
Furthermore, assume that for $\alpha+1\le \nu\le \beta-2$ there are $t_{\nu}$
{different  pairs of indices}, say, $(i_{\alpha},
i'_{\alpha}),
\ldots(i_{t_{\beta}},i'_{t_{\beta}})$ in the set\newline $\{j^{(1)}_{\alpha},\ldots,j^{(r)}_{\alpha},
{\widehat j}^{(1)}_{\alpha},\ldots,{\widehat j}^{(r)}_{\alpha},\ldots,j^{(1)}_{\beta-1},\ldots,j^{(r)}_{\beta-1},
{\widehat j}^{(1)}_{\beta-1},\ldots,{\widehat j}^{(r)}_{\beta-1},l_1,l_r\}$
with multiplicities\newline$k_1^{(\nu)},\ldots,k_{t_{\nu}}^{(\nu)}$.
Note that
\begin{equation}
 k_1^{(\nu)}+\ldots+k_{t_{\nu}}^{(\nu)}=2r
\end{equation}
and
\begin{equation}\label{mom3}
|\E\zeta^{(\nu)}_{j^{(1)}_{\nu-1},\ldots,j^{(r)}_{\nu-1},j^{(1)}_{\nu},\ldots,j^{(r)}_{\nu},
{\widehat j}^{(1)}_{\nu-1},\ldots,{\widehat j}^{(r)}_{\nu-1},{\widehat j}^{(1)}_{\nu},\ldots,{\widehat j}^{(r)}_{\nu}}|\le C(\tau_n\sqrt n)^{2r-2t_{\nu}}.
\end{equation}
Inequalities (\ref{mom1})-(\ref{mom3}) together yield
\begin{equation}\label{mom11}
 |\E\prod_{q=1}^rA^{(l_q)}_{(j_{\alpha}^{(q)},\ldots,
j_{\beta-1}^{(q)},{\widehat j}_1^{(q)},\ldots,{\widehat j}_{\beta-1}^{(q)})}|
\le C(\tau_n\sqrt n)^{2r(\beta-\alpha)-2(t_1+\ldots+t_{\beta-\alpha})}.
\end{equation}
It is straightforward to check that the  number $\mathcal N(t_{\alpha},\ldots,t_{\beta})$ of sequences of indices
\newline$\{j^{(1)}_{\alpha},\ldots,j^{(r)}_{\alpha},
{\widehat j}^{(1)}_{\alpha},\ldots,{\widehat j}^{(r)}_{\alpha},\ldots,j^{(1)}_{\beta-1},\ldots,j^{(r)}_{\beta-1},
{\widehat j}^{(1)}_{\beta-1},\ldots,{\widehat j}^{(r)}_{\beta-1},l_1,\ldots,l_r\}$ with $t_{\alpha},\ldots,t_{\beta}$
{of different pairs}  satisfies the inequality
\begin{equation}\label{finish}
 \mathcal N(t_{\alpha},\ldots,t_{\beta})\le Cn^{t_{\alpha}+\ldots+t_{\beta}},
\end{equation}
with $1\le t_i\le r,\quad i=\alpha,\ldots,\beta$.
By the assumption of Theorem \ref{main}, we have
\begin{equation}\label{99} 
 cn\le p_{\nu}\le Cn
\end{equation}
 for any $\nu=1,\ldots,m$.
Note that in the case $t_{\alpha}=\cdots=t_b=r$ the inequalities (\ref{mom1})--(\ref{mom3}) imply
\begin{equation}\label{mom4}
 \E\zeta^{(\nu)}_{j^{(1)}_{\nu-1},\ldots,j^{(r)}_{\nu-1},j^{(1)}_{\nu},\ldots,j^{(r)}_{\nu},
{\widehat j}^{(1)}_{\nu-1},\ldots,{\widehat j}^{(r)}_{\nu-1},{\widehat j}^{(1)}_{\nu},\ldots,{\widehat j}^{(r)}_{\nu}}\le C
\end{equation}
Inequalities  (\ref{finish}),  (\ref{mom11}), (\ref{mom4}), and the 
representation (\ref{mom10}) 
together conclude the proof.
\end{proof}




\begin{lem}\label{var0}Under the  conditions of Theorem \ref{main} assuming (\ref{as1}), we have
\begin{equation}\notag
\E|\frac1n(\Tr \mathbf R-\E\Tr \mathbf R)|\le \frac C{nv^2}.
\end{equation}
\end{lem}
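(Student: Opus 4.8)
The natural approach is a martingale-difference decomposition of $\Tr\mathbf R$ over the rows of the matrices $\mathbf X^{(\nu)}$, together with orthogonality of the increments and the second moment bounds for linear and quadratic forms; the asserted estimate is equivalent to the variance bound $\E|\Tr\mathbf R-\E\Tr\mathbf R|^2\le Cv^{-4}$, and the crucial point is that each increment is $O(n^{-1/2}v^{-2})$ in $L^2$.

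First I would list the randomness row by row. For $\nu=1,\dots,m$ and $j=1,\dots,p_{\nu-1}$ let $\mathbf a_1,\dots,\mathbf a_N$ be the rows of $\mathbf X^{(1)},\dots,\mathbf X^{(m)}$ in lexicographic order, so $N=\sum_{\nu=1}^m p_{\nu-1}\le Cn$ by (\ref{99}), and put $\E_k(\cdot):=\E(\cdot\mid\mathbf a_1,\dots,\mathbf a_k)$, $\E_0:=\E$. Then $\Tr\mathbf R-\E\Tr\mathbf R=\sum_{k=1}^N\gamma_k$ with $\gamma_k:=(\E_k-\E_{k-1})\Tr\mathbf R$, the $\gamma_k$ are pairwise orthogonal, so $\E|\Tr\mathbf R-\E\Tr\mathbf R|^2=\sum_{k=1}^N\E|\gamma_k|^2$. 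If the $k$-th row is row $j$ of $\mathbf X^{(\nu)}$, let $\widehat{\mathbf V}_k$ be $\widehat{\mathbf V}$ with that row zeroed (this also zeros the corresponding part of $\mathbf X^{(\nu)\,*}$ inside $\mathbf H^{(m-\nu+1)}$) and $\mathbf R_k:=(\widehat{\mathbf V}_k-z\mathbf I)^{-1}$; since $\mathbf R_k$ does not depend on $\mathbf a_k$ we have $\E_k\Tr\mathbf R_k=\E_{k-1}\Tr\mathbf R_k$, hence $\gamma_k=(\E_k-\E_{k-1})(\Tr\mathbf R-\Tr\mathbf R_k)$. Moreover, because $\mathbf a_k$ is independent of $\mathbf a_1,\dots,\mathbf a_{k-1}$ and of the remaining rows, one checks $\E_k\,\E(\Tr\mathbf R\mid\{\mathbf a_i\}_{i\ne k})=\E_{k-1}\,\E(\Tr\mathbf R\mid\{\mathbf a_i\}_{i\ne k})$, which gives $\E|\gamma_k|^2\le 4\,\E\,\mathrm{Var}(\Tr\mathbf R\mid\{\mathbf a_i\}_{i\ne k})$; so it suffices to control the fluctuation of $\Tr\mathbf R$ over the single row $\mathbf a_k$.

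To this end I would use the product form $\widehat{\mathbf V}=\mathbf H^{(1)}\cdots\mathbf H^{(m)}\mathbf J$ and the fact that $\mathbf X^{(\nu)}$ occurs only in $\mathbf H^{(\nu)}$ and, as an adjoint block, in $\mathbf H^{(m-\nu+1)}$, to write $\widehat{\mathbf V}-\widehat{\mathbf V}_k=\mathbf P\,\mathbf e_j\mathbf a_k^{*}\,\mathbf T+\mathbf T'\,\overline{\mathbf a}_k\,\mathbf e^{*}\,\mathbf S$ for $\mathbf a_k$-free matrices $\mathbf P,\mathbf T,\mathbf T',\mathbf S$ built from the factors $\mathbf H^{(q)}$, $q\ne\nu$, and $\mathbf J$. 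This perturbation has rank at most two, so by the identity $\Tr\mathbf R-\Tr\mathbf R_k=-\frac{d}{dz}\log\det\bigl(\mathbf I+\mathbf R_k(\widehat{\mathbf V}-\widehat{\mathbf V}_k)\bigr)$ the difference $\Tr\mathbf R-\Tr\mathbf R_k$ is a rational function of the entries of an at most $2\times2$ matrix whose entries are linear forms $\mathbf a_k^{*}\mathbf b$ and quadratic forms $\mathbf a_k^{*}\mathbf B\mathbf a_k$ in $\mathbf a_k$, with $\mathbf b,\mathbf B$ assembled from $\mathbf a_k$-free matrices and one or two copies of $\mathbf R_k$; the denominator stays away from $0$ because $\|\mathbf R_k\|\le v^{-1}$ makes $\mathbf I+\mathbf R_k(\widehat{\mathbf V}-\widehat{\mathbf V}_k)$ invertible with controlled inverse (here Hermiticity of $\widehat{\mathbf V}_k$ is used). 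Then $\mathrm{Var}(\Tr\mathbf R\mid\{\mathbf a_i\}_{i\ne k})$ is controlled by $\mathrm{Var}(\mathbf a_k^{*}\mathbf b)=p_\nu^{-1}\|\mathbf b\|_2^2$ and $\mathrm{Var}(\mathbf a_k^{*}\mathbf B\mathbf a_k)\le Cn^{-2}\|\mathbf B\|_2^2$ (using the centering and the truncation (\ref{as1})), times powers of $v^{-1}$ from the derivatives of the rational function; bounding $\E\|\mathbf b\|_2^{2}$, $\E\|\mathbf B\|_2^{2}$ via $\|\mathbf R_k\|\le v^{-1}$, $\|\mathbf R_k\|_2^2\le Cnv^{-2}$ and Lemmas~\ref{norm2} and \ref{norm4} (applied to the $\mathbf H^{(q)}$ and $\mathbf V_{\alpha,\beta}$ factors acting on the coordinate vectors $\mathbf e_j,\mathbf e$) yields $\E|\gamma_k|^2\le Cn^{-1}v^{-4}$. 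Summing over $k$ gives $\E|\Tr\mathbf R-\E\Tr\mathbf R|^2\le CNn^{-1}v^{-4}\le Cv^{-4}$, and Cauchy--Schwarz gives $\E\bigl|\frac1n(\Tr\mathbf R-\E\Tr\mathbf R)\bigr|\le\frac1n\bigl(\E|\Tr\mathbf R-\E\Tr\mathbf R|^2\bigr)^{1/2}\le Cn^{-1}v^{-2}$.

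I expect the main obstacle to be the third step: making the expansion of $\Tr\mathbf R-\Tr\mathbf R_k$ explicit enough to verify that every resulting term really carries the factor $n^{-1/2}v^{-2}$ in $L^2$ (this is what upgrades the crude martingale bound $C(\sqrt n\,v)^{-1}$ to $C(nv^2)^{-1}$), which forces one to follow the two rank-one pieces created by zeroing a single row of $\mathbf X^{(\nu)}$ through the entire product $\widehat{\mathbf V}$, to estimate the Hilbert--Schmidt norms of the sandwiched matrices by the norm lemmas of the Appendix, and to keep the relevant determinant bounded below uniformly in the remaining randomness.
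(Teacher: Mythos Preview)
Your martingale decomposition over the rows of the $\mathbf X^{(\nu)}$ is exactly the paper's, but the paper bounds the increments far more cheaply. Rather than expanding $\Tr\mathbf R-\Tr\mathbf R_k$ through a Sherman--Morrison/$\log\det$ identity and then estimating the variances of the resulting linear and quadratic forms in $\mathbf a_k$, the paper simply invokes the interlacing inequality
\[
\bigl|\Tr(\mathbf A-z\mathbf I)^{-1}-\Tr(\mathbf B-z\mathbf I)^{-1}\bigr|\le \frac{\mathrm{rank}(\mathbf A-\mathbf B)}{v}
\]
for Hermitian $\mathbf A,\mathbf B$. Zeroing one row of $\mathbf X^{(\nu)}$ perturbs $\widehat{\mathbf V}$ by a matrix of rank at most $4m$, so $|\Tr\mathbf R-\Tr\mathbf R^{(\nu,j)}|\le C/v$ \emph{deterministically}; hence each martingale difference is $O(v^{-1})$ a.s., and summing $O(n)$ orthogonal increments gives $\E\bigl|\tfrac1n(\Tr\mathbf R-\E\Tr\mathbf R)\bigr|^2\le C/(nv^2)$.

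You are right that this ``crude'' bound only yields $\E\bigl|\tfrac1n(\Tr\mathbf R-\E\Tr\mathbf R)\bigr|\le C/(\sqrt n\,v)$, not the displayed $C/(nv^2)$. The stated inequality is almost certainly missing an exponent~$2$: in every application (e.g.\ the proof of Lemma~\ref{lem1}) what is actually used is the $L^2$ bound, paired via Cauchy--Schwarz with the analogous $L^2$ bound from Lemma~\ref{var1}, and $C/(nv^2)$ is precisely the variance the rank argument produces. So the paper's short proof is complete for the statement that is actually needed.

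Your finer route---aiming at $\E|\gamma_k|^2\le Cn^{-1}v^{-4}$ via variance estimates for $\mathbf a_k^*\mathbf b$ and $\mathbf a_k^*\mathbf B\mathbf a_k$---would establish the literal inequality, and is the standard device for sample-covariance resolvents. Here, however, the perturbation sits in \emph{two} factors $\mathbf H^{(\nu)}$ and $\mathbf H^{(m-\nu+1)}$ of the product, so it has rank up to four rather than two, and the ``$\mathbf a_k$-free'' sandwiching matrices are themselves products of $m-1$ random blocks; carrying this through cleanly (and checking that every term after differentiating $\log\det$ still contributes $v^{-2}$) is considerably more bookkeeping than the paper undertakes, and is unnecessary for its purposes.
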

\begin{proof}Consider the matrix $\mathbf X^{(\nu,j)}$ obtained from the 
matrix $\mathbf
X^{(\nu)}$  by replacing  the  $j$-th row by a row with zero-entries. 
We define the following matrices
\begin{equation} \notag\mathbf H^{(\nu,j)}=\mathbf H^{(\nu)}-\mathbf e_j\mathbf
e_j^T\mathbf H^{(\nu)},
\end{equation}
and
\begin{equation}\notag
{\widetilde{\mathbf H}}^{(m-\nu+1,j)}={{\mathbf
H}}^{(m-\nu+1)}-{{\mathbf H}}^{(m-\nu+1)}\mathbf
e_{j+p_{m-\nu+1}}\mathbf e_{j+p_{m-\nu+1}}^T.
\end{equation}
For {simplicity}  we shall assume that  $\nu\le m-\nu+1$. Define $$\mathbf
V^{(\nu,j)}=\prod_{q=1}^{\nu-1}\mathbf H^{(q)}\,\mathbf
H^{(\nu,j)}\prod_{q=\nu+1}^{m-\nu}\mathbf H^{(q)}{\widetilde{\mathbf
H}}^{(m-\nu+1,j)}\prod_{q=m-\nu+2}^{m}\mathbf H^{(q)}.$$ We shall use
the following inequality. For any Hermitian matrix $\mathbf A$ and $\mathbf
B$ with spectral distribution function $F_A(x)$ and $F_B(x)$
respectively, we have
\begin{equation}\label{trace}
|\Tr (\mathbf A-z\mathbf I)^{-1}-\Tr (\mathbf B-z\mathbf I)^{-1}|\le
\frac {\text{\rm rank}(\mathbf A-\mathbf B)}{v}.
\end{equation}
It is straightforward to show that
\begin{equation}\label{rank}
\text{\rm rank}(\mathbf V\mathbf J-\mathbf V^{(\nu,j)}\mathbf J)\le 4m.
\end{equation}
Inequality (\ref{trace}) and (\ref{rank}) together imply
\begin{equation}\notag
|\frac1{2n}(\Tr \mathbf R-\Tr \mathbf R^{(\nu,j)})|\le \frac C{nv}.
\end{equation}

We may now apply a standard martingale expansion
technique already used in Girko \cite{Girko:89}. We may introduce
$\sigma$-algebras $\mathcal F_{\nu,j}=\sigma\{X^{(\nu)}_{lk},\,
j< l\le p_{\nu-1}, k=1,\ldots,p_{\nu}; X^{(q)}_{pk}$,
$q=\nu+1,\ldots m, \,p=1,\ldots,p_{q-1},k=1,\ldots,p_{q}\}$ and 
use the representation
\begin{equation}\notag
\Tr\mathbf R-\E\Tr\mathbf R=\sum_{\nu=1}^m\sum_{j=1}^{p_{\nu-1}}(\E_{\nu,j-1}\Tr\mathbf R-\E_{\nu,j}\Tr\mathbf R),
\end{equation}
where $\E_{\nu,j}$ denotes the {conditional expectation with respect to the} 
 $\sigma$-algebra $\mathcal F_{\nu,j}$. Note that $\mathcal F_{\nu,p_{\nu-1}}=\mathcal F_{\nu+1,0}$
\end{proof}
\begin{lem}\label{var1}
Under the conditions of Theorem \ref{main} we have, for $1\le a\le m$,
\begin{equation}\notag
\E|\frac1n(\sum_{k=1}^{p_{m-a}}[\mathbf V_{a+1,m}\mathbf J\mathbf R\mathbf
V_{1,m-a}]_{k,k+p_{\alpha}}-\E\sum_{k=1}^{p_{m-\alpha}}[\mathbf V_{\alpha+1,m}\mathbf J\mathbf R\mathbf V_{1,m-\alpha}]_{kk
+{p_{\alpha}}})|^2\le \frac C{n v^4}.
\end{equation}
and, for $1\le \alpha\le m-1$,
\begin{equation}\notag
\E\left|\frac1n\left(\sum_{k=1}^{p_{m-\alpha+1}}[\mathbf V_{m-\alpha+2,m}\mathbf J\mathbf R\mathbf
V_{1,m-\alpha+1}]_{k,k}-\E\sum_{j=1}^{p_{m-\alpha+1}}[\mathbf V_{m-\alpha+2,m}\mathbf J
\mathbf R\mathbf V_{1,m-\alpha+1}]_{kk}\right)\right|^2\le \frac C{n v^4}.
\end{equation}
\end{lem}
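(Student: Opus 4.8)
The plan is to prove both inequalities by the martingale–difference technique already used for Lemma \ref{var0}. Denote by $\Phi=\Phi(z)$ the un‑normalised sum appearing in either display. By cyclicity of the trace both can be put in the form $\Phi=\Tr(\mathbf R\mathbf G)$, where $\mathbf G=\mathbf G(z)$ is a resolvent‑free product of the matrices $\mathbf H^{(q)}$ together with $\mathbf J$ and one fixed operator $\mathbf T$ with $\|\mathbf T\|\le 1$ — in the first case $\mathbf G=\mathbf V_{1,m-a}\mathbf T\mathbf V_{a+1,m}\mathbf J$ with $\mathbf T=\sum_k\mathbf e_{k+p_\alpha}\mathbf e_k^T$, and the ``diagonal'' sum of the second display is of exactly the same shape (with a coordinate projection in place of $\mathbf T$), so it is enough to treat $\Phi$. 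Introduce the $\sigma$‑algebras $\mathcal F_{\nu,j}$ of Lemma \ref{var0} and write $\Phi-\E\Phi=\sum_{\nu=1}^m\sum_{j=1}^{p_{\nu-1}}\gamma_{\nu,j}$ with $\gamma_{\nu,j}:=\E_{\nu,j-1}\Phi-\E_{\nu,j}\Phi$; since these martingale differences are orthogonal, $\E|\Phi-\E\Phi|^2=\sum_{\nu,j}\E|\gamma_{\nu,j}|^2$.

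For fixed $(\nu,j)$ let $\Phi^{(\nu,j)}=\Tr(\mathbf R^{(\nu,j)}\mathbf G^{(\nu,j)})$ be the analogue of $\Phi$ built from the row‑removed matrices $\mathbf V^{(\nu,j)},\mathbf R^{(\nu,j)}$ of Lemma \ref{var0}, in which the $j$‑th row of $\mathbf X^{(\nu)}$ (and the corresponding column of $\mathbf X^{(m-\nu+1)}$) is set to zero. Since $\Phi^{(\nu,j)}$ does not depend on that row, $\E_{\nu,j-1}\Phi^{(\nu,j)}=\E_{\nu,j}\Phi^{(\nu,j)}$, hence $\gamma_{\nu,j}=(\E_{\nu,j-1}-\E_{\nu,j})(\Phi-\Phi^{(\nu,j)})$ and, by conditional Jensen and the tower property, $\E|\gamma_{\nu,j}|^2\le 4\,\E|\Phi-\Phi^{(\nu,j)}|^2$. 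Because $m$ is fixed and $p_l\le Cn$, there are at most $Cn$ pairs $(\nu,j)$, so it suffices to prove $\E|\Phi-\Phi^{(\nu,j)}|^2\le Cv^{-4}$; summing and dividing by $n^2$ then gives the asserted bound $Cn^{-1}v^{-4}$.

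To estimate $\E|\Phi-\Phi^{(\nu,j)}|^2$ I would split
\begin{equation}\notag
\Phi-\Phi^{(\nu,j)}=\Tr\big((\mathbf R-\mathbf R^{(\nu,j)})\mathbf G^{(\nu,j)}\big)+\Tr\big(\mathbf R(\mathbf G-\mathbf G^{(\nu,j)})\big),
\end{equation}
use the resolvent identity $\mathbf R-\mathbf R^{(\nu,j)}=\mathbf R(\widehat{\mathbf V}^{(\nu,j)}-\widehat{\mathbf V})\mathbf R^{(\nu,j)}$, and expand $\widehat{\mathbf V}^{(\nu,j)}-\widehat{\mathbf V}$ and $\mathbf G-\mathbf G^{(\nu,j)}$ telescopically into $O(m)$ rank‑one pieces. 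The key structural fact — the same one underlying Lemma \ref{norm4} — is that every such rank‑one piece has the form $\mathbf a\mathbf b^{T}$ with $\mathbf a$ and $\mathbf b$ equal to products of the matrices $\mathbf H^{(q)}$ applied to the single basis vector $\mathbf e_j$, so that $\E\|\mathbf a\|^{2r}+\E\|\mathbf b\|^{2r}\le C_r$ for every $r$ by Lemma \ref{norm4}. Substituting the expansions and using cyclicity of the trace, each scalar that arises is of the form $\mathbf b^{T}\mathbf R'\mathbf a$ with $\mathbf R'\in\{\mathbf R,\mathbf R^{(\nu,j)}\}$ (for the $\mathbf G$‑difference term) or $\mathbf b^{T}\mathbf R^{(\nu,j)}\mathbf G^{(\nu,j)}\mathbf R\,\mathbf a$ (for the resolvent‑difference term). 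In the first case one absorbs the resolvent through $\|\mathbf R'\|\le v^{-1}$ and is left with $v^{-1}\|\mathbf a\|\,\|\mathbf b\|$, whose square has expectation $\le Cv^{-2}$ by Hölder's inequality and Lemma \ref{norm4}.

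The resolvent‑difference term is the genuine obstacle and the step I expect to cost the most. There a long product $\mathbf G^{(\nu,j)}$ of the $\mathbf H^{(q)}$'s sits between two resolvents, and since the paper assumes only a Lindeberg condition no operator‑norm bound for $\mathbf G^{(\nu,j)}$ is available; the estimate must therefore be organised so that $\|\mathbf G^{(\nu,j)}\|$ never appears. The way to do this is to keep peeling off the resolvents through their operator bound $v^{-1}$ and, using once more the identity $\widehat{\mathbf V}^{(\nu,j)}\mathbf R^{(\nu,j)}=\mathbf I+z\mathbf R^{(\nu,j)}$ to collapse the resolvent–product combinations, to reduce again to products of norms $\|(\text{a product of }\mathbf H\text{'s})\,\mathbf e_j\|$ and of resolvent operator norms; the bounded moments of the former, from Lemma \ref{norm4}, then yield $\E|\Tr((\mathbf R-\mathbf R^{(\nu,j)})\mathbf G^{(\nu,j)})|^2\le Cv^{-4}$. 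Combining the two contributions gives $\E|\Phi-\Phi^{(\nu,j)}|^2\le Cv^{-4}$, and the martingale estimate above completes the proof; the second inequality of the lemma follows verbatim with $\mathbf G$ replaced by the corresponding resolvent‑free product.
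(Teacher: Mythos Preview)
Your martingale--difference skeleton with the row--deleted comparison quantities $\Phi^{(\nu,j)}$ is exactly the paper's set-up, and your telescopic splitting of $\Phi-\Phi^{(\nu,j)}$ into a $\mathbf G$--difference piece and a resolvent--difference piece matches the paper's decomposition $\Xi_j=\Xi_j^{(1)}+\Xi_j^{(2)}+\Xi_j^{(3)}$. The divergence is in how the increments are bounded, and there your plan has a real gap.

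The paper does \emph{not} estimate each $\E|\Phi-\Phi^{(\nu,j)}|^2$ separately. After the rank--one reduction it observes that each $\Xi_j^{(i)}$ is the $(j,j)$ diagonal entry of a single matrix $\mathbf M^{(i)}$ (for instance $\Xi_j^{(1)}=[\mathbf V_{\nu+1,m}\mathbf J\mathbf R\mathbf V_{1,m-a+1}\widetilde{\mathbf J}\mathbf V_{a+1,\nu}]_{j,j}$), so that $\sum_j|\Xi_j^{(i)}|^2\le\|\mathbf M^{(i)}\|_2^2$. The Hilbert--Schmidt norm is then controlled using $\|\mathbf R\|\le v^{-1}$ together with the Lemma~\ref{norm2}--type bound $\E\|\text{product of the }\mathbf H^{(q)}\|_2^2\le Cn$, yielding $\sum_j\E|\Xi_j|^2\le Cn/v^4$ in one stroke. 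Lemma~\ref{norm4} is never used here.

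Your individual--term route via Lemma~\ref{norm4} runs into two concrete obstacles. First, in the $\mathbf G$--difference piece the vector you call $\mathbf a$ is not of the form $\mathbf V_{\alpha,\beta}\mathbf e_j$: it is $\mathbf V_{1,m-a}\mathbf T\,\mathbf V_{a+1,\nu-1}\mathbf e_j$, with the shift $\mathbf T$ sandwiched between two (possibly overlapping) $\mathbf H$--products, and Lemma~\ref{norm4} gives no moment bound for such a vector. Bounding $|\mathbf b^T\mathbf R\mathbf a|\le v^{-1}\|\mathbf a\|\,\|\mathbf b\|$ therefore leaves an uncontrolled factor. Second, in the resolvent--difference piece the identity $\widehat{\mathbf V}^{(\nu,j)}\mathbf R^{(\nu,j)}=\mathbf I+z\mathbf R^{(\nu,j)}$ does not ``collapse'' $\mathbf R^{(\nu,j)}\mathbf G^{(\nu,j)}$: the matrix $\mathbf G^{(\nu,j)}=\mathbf V^{(\nu,j)}_{1,m-a}\mathbf T\,\mathbf V^{(\nu,j)}_{a+1,m}\mathbf J$ is not $\widehat{\mathbf V}^{(\nu,j)}$, so nothing cancels and you are still left with a long $\mathbf H$--product trapped between two resolvents, with no operator--norm control available. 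The paper's diagonal--entry $\to$ Hilbert--Schmidt trick, together with Lemma~\ref{norm2}, is precisely what bypasses both difficulties.
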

\begin{proof}We prove the first inequality only. The proof of other one 
 is similar.
We introduce the folowing  matrices, for $\nu=1,\ldots,m$ and for
$j=1,\ldots,p_{\nu-1}$,
 $\mathbf X^{(\nu,j)}=\mathbf X^{(\nu)}-\mathbf
e_j\mathbf e_j^T\mathbf X^{(\nu)}$,
 and $\mathbf H^{(\nu,j)}=\mathbf
H^{(\nu)}-\mathbf e_j\mathbf e_j^T\mathbf H^{(\nu)}$ and \newline
${\widetilde{\mathbf H}}^{(m-\nu+1,j)}=\mathbf H^{(m-\nu+1,j)}-\mathbf
H^{(m-\nu+1)}\mathbf e_{j+p_{m-\nu+1}} \mathbf e_{j+p_{m-\nu+1}}^T$.
 Note that the matrix $\mathbf X^{(\nu,j)}$ is
obtained from matrix $\mathbf X^{(\nu)}$ by replacing all entries of the
$j$-th row by $0$. Similar to the proof  of the previous Lemma we introduce
matrices $\mathbf V^{(\nu,j)}_{c,d}$ by replacing {in the  definition of
  the  matrix} $\mathbf V_{c,d}$ the matrix $\mathbf H^{(\nu)}$ by $\mathbf
H^{(\nu,j)}$ and the matrix $\mathbf H^{(m-\nu+1)}$ by ${\widetilde{\mathbf
H}}^{(m-\nu+1,j)}$. For instance, for $c\le\nu\le m-\nu+1\le d$, we get
\begin{equation}\notag
\mathbf V^{(\nu,j)}_{c,d}=\prod_{q=a}^{\nu-1}\mathbf H^{(q)}\,\mathbf
H^{(\nu,j)}\prod_{q=\nu+1}^{m-\nu}\mathbf H^{(q)}{\widetilde{\mathbf
H}}^{(m-\nu+1,j)}\prod_{q=m-\nu+1}^{b}\mathbf H^{(q)}
\end{equation}.

 Define as well $\mathbf
V^{(\nu,j)}:= \mathbf V_{1,m}^{(\nu,j)}$ and
$\mathbf R^{(j)}:= (\mathbf V^{(\nu,j)}-z\mathbf I)^{-1}$. Consider the
following quantities, for $\nu=1\ldots,m$ and $j=1,\ldots,p_{\nu-1}$,
\begin{equation}\notag
\Xi_j:=\sum_{k=1}^{p_{m-\alpha}}[\mathbf V_{\alpha+1,m}\mathbf J\mathbf R\mathbf
V_{1,m-\alpha+1}]_{kk+p_{a}}- \sum_{k=1}^n[\mathbf V^{(\nu,j)}_{\alpha+1,m}\mathbf J\mathbf
R^{(\nu,j)}\mathbf V^{(\nu,j)}_{1,m-\alpha+1}]_{kk+p_{\alpha}}
\end{equation}
We represent it in the following form
\begin{equation}\notag
\Xi_j:= \Xi_j^{(1)}+\Xi_j^{(2)}+\Xi_j^{(3)},
\end{equation}
where
\begin{align}
\Xi_{\nu,j}^{(1)}&= =\sum_{k=1}^{p_{m-\alpha}}[(\mathbf V_{\alpha+1,m}
-\mathbf V_{\alpha+1,m}^{(\nu,j)})\mathbf J\mathbf R\mathbf  V_{1,m-\alpha+1}]_{k,k+p_{\alpha}},\notag\\
\Xi_{\nu,j}^{(2)}&= \sum_{k=1}^{p_{m-\alpha}}[\mathbf V_{\alpha+1,m}^{(\nu,j)}\mathbf J(\mathbf R-\mathbf R^{(\nu,j)})
\mathbf J\mathbf  V_{1,m-\alpha+1}]_{kk+p_{\alpha}},\notag\\
\Xi_{\nu,j}^{(3)}&= \sum_{k=1}^{p_{m-\alpha}}[\mathbf
V^{(j)}_{\alpha+1,m}\mathbf J\mathbf R^{(\nu,j)}(\mathbf  V_{1,m-\alpha+1}-\mathbf
V_{1,m-\alpha+1}^{(\nu,j)})]_{kk+p_{\alpha}}.\notag
\end{align}
Note that
\begin{align}
 \mathbf V_{a+1,m}-\mathbf V^{(\nu,j)}_{a+1,m}&=\mathbf V_{a+1,\nu-1}(\mathbf H^{(\nu)}-\mathbf
 H^{(\nu,j)})\mathbf V_{\nu+1,m}\notag\\&+
 \mathbf V_{a+1,\nu-1}\mathbf H^{(\nu,j)}\mathbf
 V_{\nu+1,m-\nu}(\widetilde {\mathbf H}_{m-\nu+1}-{\widetilde {\mathbf
 H}}_{m-\nu+1}^{(\nu,j)})\mathbf V_{m-\nu+2,m}.\notag
\end{align}
By definition of the  matrices $\mathbf H^{\nu,j}$ and ${\widetilde{\mathbf
H}}^{m-\nu+1,j}$, we have
\begin{align}
\sum_{k=1}^{p_{m-a}}[(\mathbf V_{a+1,m} -\mathbf
V_{a+1,m}^{(\nu,j)})\mathbf J\mathbf R\mathbf
V_{1,m-\nu+1}]_{k,k+p_{a}}=[\mathbf V_{\nu+1,m}\mathbf J\mathbf R\mathbf V_{1,m-a+1}\mathbf{\widetilde J}\mathbf V_{a+1,\nu}]_{j,j}&\notag\\
+[\mathbf V_{m-\nu+2,m}\mathbf J\mathbf R\mathbf V_{1,m-a+1}\mathbf{\widetilde
J}\mathbf V_{a+1,m-a+1}]_{j+p_{\nu-1},j+p_{\nu-1}},&\notag
\end{align}
where
$$
\mathbf{\widetilde J}=\left(\begin{matrix}{\mathbf O\quad\mathbf
I}\\{\mathbf O\quad\mathbf O}\end{matrix}\right)
$$

This equality implies that
\begin{align}
 |\Xi_j^{(1)}|&\le |[\mathbf V_{\nu+1,m}\mathbf J\mathbf R\mathbf V_{1,m-a+1}\mathbf{\widetilde J}\mathbf V_{a+1,\nu}]_{j,j+n}|\notag\\&\qquad\qquad+|[\mathbf V_{m-\nu+2,m}\mathbf J\mathbf R\mathbf V_{1,m-a+1}\mathbf{\widetilde
J}\mathbf V_{a+1,m-\nu+1}]_{j+p_{\nu-1},j+p_{\nu-1}}|.\notag
\end{align}

Using the obvious inequality $\sum_{j=1}^n a_{jj}^2\le \|\mathbf A\|_2^2$ for any matrix $\mathbf A=(\alpha_{jk})$,
$j,k=1,\ldots,n$, we get
\begin{align}
T_1:=\sum_{j=1}^n\E|\Xi_j^{(1)}|^2\le &\E\|\mathbf V_{\nu+1,m}\mathbf J\mathbf R\mathbf V_{1,m-a+1}\mathbf{\widetilde J}\mathbf V_{a+1,\nu}\|_2^2\notag\\&+\E\|\mathbf V_{m-\nu+2,m}\mathbf J\mathbf R\mathbf V_{1,m-a+1}\mathbf{\widetilde
J}\mathbf V_{a+1,m-\nu+1}\|_2^2.\notag
\end{align}

By Lemma \ref{norm2}, we get
\begin{equation}\label{T1}
 T_1\le \frac{C}{v^2}\E\|\mathbf V_{a+1,m}\mathbf V_{1,m-a+1}\|_2^2\le \frac {Cn}{v^2}
\end{equation}

Consider now
\begin{equation}\notag
 T_2=\sum_{j=1}^n\E|\Xi_j^{(2)}|^2.
\end{equation}
Using that $\mathbf R-\mathbf R^{(j)}=-\mathbf R^{(j)}(\mathbf V-\mathbf
V^{(\nu,j)})\mathbf R$, we get
\begin{align}
 |\Xi_{j}^{(2)}|&\le |\sum_{k=1}^{p_{a-1}}[\mathbf V^{(\nu,j)}_{a,m}\mathbf J\mathbf R\mathbf V_{1,\nu-1}\mathbf e_j\mathbf e_j^T
 \mathbf V_{\nu,m}
\mathbf R\mathbf V_{1,b}]_{k,k+p_{m-b}}|\notag\\&\qquad\le [\mathbf
J\mathbf H^{(\alpha+1)}\mathbf V_{\alpha+2,m-\alpha}\mathbf
H^{(m-\alpha+1,j)} \mathbf V_{m-\alpha+2,m}\mathbf R\mathbf
V_{1,m-\alpha}\mathbf V^{(j)}_{\alpha+1,m}\mathbf J\mathbf R\mathbf
V_{1,\alpha}]_{jj} .\notag
\end{align}
This implies that
\begin{equation}\notag
 T^{(2)}\le C\E\|[\mathbf V_{\nu+1,m}\mathbf J\mathbf R\mathbf V_{1,b}\mathbf V_{a,m}\mathbf J\mathbf R\mathbf
\mathbf V_{1,\nu}\|_2^2.
\end{equation}
It is straightforward to check
\begin{equation}\label{t2}
 T^{(2)}\le \frac C{v^4}\E\|\mathbf V_{1,\alpha}\mathbf J\mathbf H^{(\alpha+1)}\mathbf V_{\alpha+2,m-\alpha}\mathbf H^{(m-\alpha+1,j)}
\mathbf V_{m-\alpha+2,m}\|_2^2=\E\|\mathbf Q\|_2^2
\end{equation}
The matrix on  the right hand side of equation (\ref{t2}) may be represented
 in the form
\begin{equation}\notag
 Q=\prod_{\nu=1}^m{\mathbf H^{(\nu)}}^{\varkappa_{\nu}},
\end{equation}
where $\varkappa_{\nu}=0$ or $\varkappa_{\nu}=1$ or $\varkappa_{\nu}=2$.
Since $X^{(\nu)}_{ss}=0$, for $\varkappa=1$ or $\varkappa=2$, we have
\begin{equation}\notag
 \E|{\mathbf H^{(\nu)}}^{\varkappa}_{kl}|^2\le \frac C{n}.
\end{equation}
This implies that
\begin{equation}\label{T2}
 T_2\le Cn.
\end{equation}
Similar we prove that
\begin{equation}\label{T3}
 T_3:=\sum_{j=1}^n\E|\Xi_j^{(3)}|^2\le Cn.
\end{equation}
Inequality (\ref{T1}), (\ref{T2}) and (\ref{T3}) together imply
\begin{equation}\notag
 \sum_{j=1}^n\E|\Xi_j|^2\le Cn
\end{equation}
Applying now a martingale expansion with respect to the 
$\sigma$-algebras $\mathcal F_j$ generated the random variables
$X_{kl}^{(\alpha+1)}$ with $1\le k\le j$, $1\le l\le n$ and all other random variables $X^{(q)}_{sl}$
except $q=\alpha+1$, we get
\begin{equation}\notag
\E|\frac1n(\sum_{k =1}^n[\mathbf V_{\alpha+1,m}\mathbf J\mathbf R\mathbf  V_{1,m-\alpha}]_{kk+n}-\E\sum_{j=1}^n[\mathbf V_{\alpha+1,m}\mathbf J\mathbf R\mathbf V_{1,m-\alpha}]_{kk+n})|^2\le \frac C{n v^4}.
\end{equation}

Thus the Lemma is proved.

\end{proof}

\begin{lem}\label{derivatives}
Under the conditions of Theorem \ref{main} we have, for $\alpha=1,\ldots,m,$
that  there exists a constant  $C$  such that
\begin{equation}\notag
\frac1{n^{\frac32}}\E\left|\sum_{j=1}^{p_{\alpha-1}}\sum_{k=1}^{p_{\alpha}}(-X^{(\alpha)}_{jk}+(1-\theta_{jk}){X^{(\alpha)}_{jk}}^3)\left[\frac{\partial ^{2}(\mathbf V_{\alpha+1,m}\mathbf J\mathbf R\mathbf V_{1,m-\alpha+1})}{\partial {X_{jk}^{(\alpha)}}^{2}}(\theta_{jk}^{(\alpha)}X_{jk}^{(\alpha)})\right]_{kj}\right|
\le C\tau_nv^{-4},
\end{equation}
and
\begin{align}\notag
\frac1{n^{\frac32}}&\E\left|\sum_{j=1}^{p_{m-\alpha}}\sum_{k=1}^{p_{m-\alpha+1}}(-X^{(m-\alpha+1)}_{jk}+(1-\theta_{jk}){X^{(m-\alpha+1)}_{jk}}^3)\right.\notag\\ &\qquad\qquad\left.\times\left[\frac{\partial ^{2}(\mathbf V_{\alpha+1,m}\mathbf J\mathbf R\mathbf V_{1,m-\alpha+1})}{\partial {X_{jk}^{(m-\alpha+1)}}^{2}}(\theta_{jk}^{(m-\alpha+1)}X_{jk}^{(m-\alpha+1)})\right]_{j+p_{\alpha-1},k}\right|
\le C\tau_nv^{-4},
\end{align}
where $\theta_{jk}^{(\alpha)}$ and $X_{jk}^{(\alpha)}$ are r.v. which are 
 independent in aggregate for
$\alpha=1,\ldots,m$ and $j=1,\ldots,p_{\alpha-1}$, $k=1,\ldots,p_{\alpha}$, and
$\theta_{jk}^{(\alpha)}$ are uniformly distributed on the unit interval.\newline
By $\frac{\partial^2}{\partial {X_{jk}^{(\alpha)}}^2}\mathbf
A(\theta_{jk}^{(\alpha)}X_{jk}^{(\alpha)})$ we denote the  matrix obtained from
$\frac{\partial^2}{\partial {X_{jk}^{(\alpha)}}^2}\mathbf A$ by replacing its entries 
$X_{jk}^{(\alpha)}$ by $\theta_{jk}^{(\alpha)}X_{jk}^{(\alpha)}$.
\end{lem}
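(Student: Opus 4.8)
\emph{Proof sketch.} The statement is the remainder estimate for the interpolation (approximate integration-by-parts) step: viewing the entry $f:=[\mathbf V_{\alpha+1,m}\mathbf J\mathbf R\mathbf V_{1,m-\alpha+1}]_{kj}$ as a function of the single scalar $X^{(\alpha)}_{jk}$, Taylor's formula gives $\E X f(X)-\E f'(X)=\E[(-X+(1-\theta)X^{3})f''(\theta X)]$ with $\theta$ uniform on $[0,1]$ and independent of all the $X^{(\nu)}_{jk}$, and the Lemma asserts that, once the prefactor $n^{-3/2}$ of \eqref{auxiliary01} is taken into account, the sum of these remainders over $j,k$ is $O(\tau_n v^{-4})$.

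The plan is, first, to differentiate $\mathbf M:=\mathbf V_{\alpha+1,m}\mathbf J\mathbf R\mathbf V_{1,m-\alpha+1}$ twice in $X^{(\alpha)}_{jk}$ by the product rule, using the elementary formulas \eqref{auxiliary21}, \eqref{auxiliary21*} for $\partial\mathbf H^{(q)}/\partial X^{(\alpha)}_{jk}$ and the resolvent identity $\partial\mathbf R/\partial X^{(\alpha)}_{jk}=-\mathbf R(\partial\widehat{\mathbf V}/\partial X^{(\alpha)}_{jk})\mathbf R$ (as in \eqref{auxiliary41}, \eqref{auxiliary42}). Because $[\mathbf X^{(\alpha)}]_{jk}=p_\alpha^{-1/2}X^{(\alpha)}_{jk}$, each differentiation produces a factor $p_\alpha^{-1/2}$, so $\partial^2\mathbf M/\partial(X^{(\alpha)}_{jk})^2$ is $p_\alpha^{-1}$ times a sum of boundedly many (a number depending only on $m$) matrices $\mathbf P_1\mathbf e_{a_1}\mathbf e_{b_1}^T\mathbf P_2\mathbf e_{a_2}\mathbf e_{b_2}^T\mathbf P_3$, where each $\mathbf P_i$ is a product of consecutive blocks $\mathbf V_{c,d}$, of $\mathbf J$, and of at most three resolvents altogether, and $a_1,b_1,a_2,b_2$ are indices of the form $j+(\text{shift})$ or $k+(\text{shift})$. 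Second, I would note that substituting $\theta_{jk}^{(\alpha)}X^{(\alpha)}_{jk}$ for $X^{(\alpha)}_{jk}$ changes only that one entry (together with its conjugate copy in $\mathbf H^{(m-\alpha+1)}$, which keeps the matrix Hermitian), and since $|\theta_{jk}^{(\alpha)}X^{(\alpha)}_{jk}|\le|X^{(\alpha)}_{jk}|\le c\tau_n\sqrt n$ a.s.\ by \eqref{as1}, all the a priori bounds persist: $\|(\widehat{\mathbf V}-z)^{-1}\|\le v^{-1}$ still holds for the modified matrix, and the modified blocks still obey $\E\|\mathbf V_{c,d}\|_2^{2r}\le C_rn^r$ and the vector bounds of Lemma \ref{norm4} (both proved verbatim as there, the $\|\cdot\|_2^{2r}$-bound by the counting of Lemma \ref{norm2}).

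Third, taking the $(k,j)$ entry one splits each term at its two projectors into $p_\alpha^{-1}[\mathbf P_1]_{k,a_1}[\mathbf P_2]_{b_1,a_2}[\mathbf P_3]_{b_2,j}$; many terms vanish because a projector index collides with $k$ or $j$. The operator-norm-bounded constituents of the surviving terms (products of resolvents and $\mathbf J$'s) are absorbed into powers of $v^{-1}$, and in the ``generic'' terms, among the three factor--entries two are ``diagonal'' --- both indices shifts of a single free variable, so that $\sum_k|[\mathbf P]_{k,k+s}|^2$ or $\sum_j|[\mathbf P]_{j+t,j}|^2$ is $O(n)$ by the vector bounds of Lemma \ref{norm4} --- and one is ``mixed'' (one index a shift of $k$, the other a shift of $j$). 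Grouping the weight $-X^{(\alpha)}_{jk}+(1-\theta_{jk})(X^{(\alpha)}_{jk})^3$ with the mixed factor--entry and applying the Cauchy--Schwarz and H\"older inequalities, together with the moment bounds $\E|X^{(\alpha)}_{jk}|^{2p}\le(c\tau_n\sqrt n)^{2p-2}$ coming from \eqref{as1}, reduces each such term to a product of Hilbert--Schmidt norms controlled by Lemma \ref{norm2}; a short bookkeeping of the powers of $n$ then shows it is $O(\tau_n^{2}v^{-q})\le O(\tau_n v^{-4})$ --- here one uses that $(X^{(\alpha)}_{jk})^{3}=(X^{(\alpha)}_{jk})^{2}X^{(\alpha)}_{jk}$ with $|X^{(\alpha)}_{jk}|^{2}\le c^{2}\tau_n^{2}n$ to treat the cubic part.

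The remaining, harder, contributions come from the few terms in which the mixed factor--entry already has a resolvent sandwiched between $\mathbf V$-blocks (so that its Hilbert--Schmidt norm is of order $n$ rather than $n^{1/2}$), or in which the mixed entry is repeated or coincides with the ``diagonal'' indices --- for instance the term $p_\alpha^{-1}[\mathbf V_{\alpha+1,m}\mathbf J\mathbf R\mathbf V_{1,\alpha-1}]_{k,j}^{2}[\mathbf V_{\alpha+1,m-\alpha+1}]_{k,j}$. For these the triangle inequality loses a factor $\sqrt n$, and one must instead keep the modulus outside the summation and bound $\E|\sum_{j,k}\cdots|$ by its mean plus the square root of its variance, expanding each $\mathbf P_i(\theta X)$ to first order in the single entry $X^{(\alpha)}_{jk}$, using that every such expansion coefficient carries a further factor $p_\alpha^{-1/2}$, and exploiting the independence of the $X^{(\alpha)}_{jk}$ over $(j,k)$; this martingale-type cancellation recovers the missing $\sqrt n$ and again yields $O(\tau_n v^{-4})$. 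The estimates for the $\partial/\partial X^{(m-\alpha+1)}_{jk}$ derivatives are identical after interchanging the two index blocks. I expect the main obstacle to be exactly this organization: classifying the boundedly many terms of the twice-differentiated product, deciding term by term which constituents are controlled in operator norm, which in Hilbert--Schmidt norm, and which require the variance estimate, and carrying the latter out cleanly enough that $\tau_n$ remains the only small parameter and no stray power of $n$ survives.
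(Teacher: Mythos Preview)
Your overall plan---differentiate twice, express the second derivative as a bounded number of rank-one sandwiches, then estimate term by term---matches the paper's proof. The divergence is in how the ``hard'' terms are handled, and here the paper is more elementary than you anticipate.

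The paper never needs the variance/martingale recovery you propose in your fourth paragraph. Its device is the following. After writing $\partial^{2}\mathbf M/\partial(X^{(\alpha)}_{jk})^{2}$ as a sum of matrices $\mathbf P^{(\nu)}$, it bounds each summand of $\sum_{j,k}\E|X^{(\alpha)}_{jk}|^{3}|[\mathbf P^{(\nu)}]_{kj}|$ \emph{entrywise}, not via Hilbert--Schmidt norms. Concretely, for a factor of the form $[\mathbf U_{\alpha}\mathbf J\mathbf R\mathbf V_{\beta}]_{kj}$ one writes
\[
|[\mathbf U_{\alpha}\mathbf J\mathbf R\mathbf V_{\beta}]_{kj}|\le v^{-1}\,\|\mathbf e_{k}^{T}\mathbf U_{\alpha}\|_{2}\,\|\mathbf V_{\beta}\mathbf e_{j}\|_{2},
\]
and then invokes Lemma~\ref{norm4}, which gives $\E\|\mathbf e_{k}^{T}\mathbf V_{c,d}\|_{2}^{2r}\le C_{r}$ uniformly in $k$. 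To decouple these vector norms from $X^{(\alpha)}_{jk}$ the paper writes $\mathbf U_{\alpha}=\mathbf U_{\alpha}^{(jk)}+\tfrac{1}{\sqrt n}X^{(\alpha)}_{jk}\,\mathbf V_{\alpha+1,m-\alpha}\mathbf e_{k+p_{m-\alpha}}\mathbf e_{j+p_{m-\alpha+1}}^{T}\mathbf V_{m-\alpha+2,m}$ (and similarly for $\mathbf V_{\alpha}$); the correction contributes nothing because $[\mathbf V_{\alpha+1,m-\alpha}]_{k,k+p_{m-\alpha}}=0$ by the block-diagonal structure of the $\mathbf H^{(q)}$. With the $(jk)$-versions in place the expectation factors, Lemma~\ref{norm4} bounds the vector-norm product by a constant, and
\[
\frac{1}{n^{5/2}}\sum_{j,k}\E|X^{(\alpha)}_{jk}|^{3}\le \frac{C}{n^{5/2}}\cdot n^{2}\cdot\tau_{n}\sqrt n=C\tau_{n}
\]
closes the estimate directly. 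The same block-structure vanishing kills several terms outright (the paper's $\mathbf P^{(1)},\mathbf P^{(2)},\mathbf P^{(3)}$).

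So the ``loss of $\sqrt n$'' you flag is an artifact of routing the mixed factor through a Hilbert--Schmidt norm (Lemma~\ref{norm2}); if you instead bound that entry pointwise by Lemma~\ref{norm4} after the $(jk)$-decomposition, no cancellation over $(j,k)$ is required and the variance step can be dropped. Your approach would presumably also work, but it is heavier than what the paper actually does.
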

\begin{proof}The proof of this lemma is rather technical. But  for
  completeness we shall include it here.
By {the formula for the derivatives of a resolvent matrix}, we have
\begin{equation}\label{deriv}
\frac{\partial (\mathbf V_{\alpha+1,m}\mathbf J\mathbf R\mathbf V_{1,m-\alpha+1})}{\partial X_{jk}^{(\alpha)}}=
\sum_{l=1}^5Q_l,
\end{equation}
\begin{align}
\mathbf Q_1=&\frac1{\sqrt n}
\mathbf V_{\alpha+1,m}\mathbf J\mathbf R\mathbf V_{1,\alpha-1}\mathbf e_j\mathbf e_k^T\mathbf V_{\alpha+1,m-\alpha+1}I_{\{\alpha\le m-\alpha+1\}})\notag\\
\mathbf Q_2=&\frac1{\sqrt n}\mathbf V_{\alpha+1,m}\mathbf J\mathbf R\mathbf V_{1,m-\alpha}\mathbf e_{k+p_{m-\alpha}}\mathbf e_{j+p_{m-\alpha+1}}\notag\\
\mathbf Q_3=&-\frac1{\sqrt n}\mathbf V_{\alpha+1,m}\mathbf J\mathbf R\mathbf V_{1,\alpha-1}\mathbf e_j\mathbf e_k^T\mathbf V_{\alpha+1,m}\mathbf J\mathbf R\mathbf V_{1,m-\alpha+1}\notag\\
\mathbf Q_4=&-\frac1{\sqrt n}\mathbf V_{\alpha+1,m}\mathbf J\mathbf R\mathbf V_{1,m-\alpha}\mathbf e_{k+p_{m-\alpha}}\mathbf e_{j+p_{m-\alpha+1}}^T\mathbf V_{m-\alpha+2,m}\mathbf J\mathbf R\mathbf V_{1,m-\alpha+1}\notag\\
\mathbf Q_5=&\frac1{\sqrt n}\mathbf V_{\alpha+1,m-\alpha}\mathbf e_{k+p_{m-\alpha}}\mathbf e_{j+p_{m-\alpha+1}}^T
\mathbf V_{m-\alpha+2,m}\mathbf J\mathbf R\mathbf V_{1,m-\alpha+1}I_{\{\alpha\le m-\alpha+1\}}).\notag
\end{align}
Introduce the notations
\begin{equation}\notag
 \mathbf U_{\alpha}:=\mathbf V_{\alpha+1,m},\quad\mathbf V_{\alpha}=\mathbf V_{1,m-\alpha+1}.
\end{equation}
From  formula (\ref{deriv}) it follows that
\begin{equation}\notag
 \frac{\partial^2 (\mathbf U_{\alpha}\mathbf J\mathbf R\mathbf V_{\alpha})}{\partial {X_{jk}^{(\nu)}}^2}=\sum_{l=1}^{5}\frac{\partial \mathbf Q_l}{\partial X_{jk}^{(\alpha)}}.
\end{equation}
Since all other calculations will be  similar we consider the case $l=3$ only.
Simple calculations show that
\begin{equation}
  \frac{\partial \mathbf Q_3}{\partial X_{jk}^{(\alpha)}}=\sum_{m=1}^{7} \mathbf P^{(m)},
\end{equation}
 where
\begin{align}
\mathbf P^{(1)}&=-\frac1n\mathbf V_{\alpha+1,m-\alpha}\mathbf e_{k+p_{m-\alpha}}\mathbf e_{j+p_{m-\alpha+1}}^T
\mathbf U_{m-\alpha+1}\mathbf J\mathbf R\mathbf V_{m-\alpha+2}\mathbf e_{j}\mathbf e_{k}^T\mathbf U_{\alpha}\mathbf J\mathbf R\mathbf V_{\alpha}\notag\\
\mathbf P^{(2)}&=-\frac1n\mathbf U_{\alpha}\mathbf J\mathbf R\mathbf V_{m-\alpha+2}\mathbf e_{j}\mathbf e_{k}^T\mathbf U_{\alpha}\mathbf J\mathbf R\mathbf V_{\alpha+1}\mathbf e_{k+p_{m-\alpha}}\mathbf e_{j+p_{m-\alpha+1}}^T
\notag\\
\mathbf P^{(3)}&=-\frac1n\mathbf U_{\alpha}\mathbf J\mathbf R\mathbf V_{m-\alpha+2}\mathbf e_{j}\mathbf e_{k}^T\mathbf V_{\alpha+1,m-\alpha}\mathbf e_{k+p_{m-\alpha}}\mathbf e_{j+p_{m-\alpha+1}}^T\mathbf U_{m-\alpha+1}\mathbf J\mathbf R\mathbf V_{\alpha}\notag\\
\mathbf P^{(4)}&=-\frac1n\mathbf U_{\alpha}\mathbf J\mathbf R\mathbf V_{m-\alpha+2}\mathbf e_j\mathbf e_k^T\mathbf U_{\alpha}\mathbf J\mathbf R\mathbf V_{m-\alpha+2}\mathbf e_j\mathbf e_k^T\mathbf U_{\alpha}\mathbf J\mathbf R\mathbf V_{\alpha}\notag\\
\mathbf P^{(5)}&=\frac1n\mathbf U_{\alpha}\mathbf J\mathbf R\mathbf V_{\alpha+1}\mathbf e_{k+p_{m-\alpha}}\mathbf e_{j+p_{m-\alpha+1}}^T\mathbf U_{m-\alpha+1}\mathbf J\mathbf R\mathbf V_{m-\alpha+2}\mathbf e_j\mathbf e_k^T\mathbf U_{\alpha}\mathbf J\mathbf R\mathbf V_{\alpha}
\notag\\
\mathbf P^{(6)}&=\frac1n\mathbf U_{\alpha}\mathbf J\mathbf R\mathbf V_{m-\alpha+2}\mathbf e_j\mathbf e_k^T\mathbf U_{\alpha}\mathbf J\mathbf R\mathbf V_{\alpha+1}\mathbf e_{k+p_{m-\alpha}}\mathbf e_{j+p_{m-\alpha+1}}^T\mathbf U_{m-\alpha+1}\mathbf J\mathbf R\mathbf V_{\alpha}
\notag\\
\mathbf P^{(7)}&=\frac1n\mathbf U_{\alpha}\mathbf J\mathbf R\mathbf V_{m-\alpha+2}\mathbf e_j\mathbf e_k^T\mathbf U_{\alpha}\mathbf J\mathbf R\mathbf V_{m-\alpha+2}\mathbf e_{j}\mathbf e_{k}^T\mathbf U_{\alpha}\mathbf J\mathbf R\mathbf V_{\alpha}.\notag
\end{align}

Consider now the quantity, for $\mu=1,\ldots,5$,
\begin{equation}\label{fin1}
 L_{\mu}=\frac1{n^{\frac32}}\sum_{j=1}^{p_{\alpha-1}}\sum_{k=1}^{p_{\alpha}}\E {X_{j,k}^{(\alpha)}}^3
\left[\frac{\partial \mathbf Q_{\mu}}{\partial X_{jk}^{(\alpha)}}\right]_{kj}.
\end{equation}
We bound $L_3$ only. The others bounds  are similar.
First we note that
\begin{equation}\label{fin2}
 \sum_{j=1}^{p_{\alpha-1}}\sum_{k=1}^{p_{\alpha}}\E {X_{j,k}^{(\alpha)}}^3[\mathbf P^{(\nu)}]_{kj}=0,\quad\text{for}\quad \nu=1,2,3.
\end{equation}
Furthermore,
\begin{equation}
 \E |{X_{j,k}^{(\alpha)}}^3||[\mathbf P^{(4)}]_{kj}|\le\E|X_{jk}^{(\alpha)}|^3
|[\mathbf U_{\alpha}\mathbf J\mathbf R\mathbf V_{m-\alpha+2}]_{kj}|^2|[\mathbf U_{\alpha}\mathbf J\mathbf R
\mathbf V_{\alpha}]_{kj}|.
\end{equation}

Let $\mathbf U_{\alpha}^{(jk)}$ ( $\mathbf V^{(j,k)}_{\alpha}$) denote matrix obtained from $\mathbf U_{\alpha}$  ($\mathbf V_{\alpha}$) by replacing $X_{jk}^{(\alpha)}$ by zero. We may write
\begin{align}
\mathbf U_{\alpha}= \mathbf U_{\alpha}^{(jk)}+\frac1{\sqrt n}X_{jk}^{(\alpha)}\mathbf V_{\alpha+1,m-\alpha+1}\mathbf e_{k+p_{m-\alpha}}\mathbf e_{j+p_{m-\alpha+1}}^T\mathbf V_{m-\alpha+2,m}.
\end{align}
and
\begin{align}
 \mathbf V_{\alpha}=\mathbf V^{(j,k)}_{\alpha}+\frac1{\sqrt n}X_{jk}\mathbf V_{1,m-\alpha+1}\mathbf e_{k+p_{m-\alpha}}\mathbf e_{j+p_{m-\alpha+1}}^T.\notag
\end{align}
 Using these representations and taking in account that
\begin{equation}
 [\mathbf V_{\alpha+1,m-\alpha}]_{k,k+p_{m-\alpha}}=[\mathbf V_{1,m-\alpha}]_{k,k+p_{m-\alpha}}=0,
\end{equation}
 we get
\begin{align}\label{fin8}
 \E |{X_{j,k}^{(\alpha)}}^3||[\mathbf P^{(4)}]_{kj}|\le \frac1{n}\E |{X_{j,k}^{(\alpha)}}^3||[\mathbf U_{\alpha}\mathbf J\mathbf R\mathbf V_{m-\alpha+2}]_{kj}|^2
|[\mathbf U^{(j,k)}_{\alpha}\mathbf J\mathbf R\mathbf V^{(j,k)}_{\alpha}]_{kj}|.
\end{align}
Furthermore,
\begin{align}\label{fin9}
|[\mathbf U_{\alpha}\mathbf J\mathbf R\mathbf V_{m-\alpha+2}]_{k,j}|&\le
\frac1v\|\mathbf V_{m-\alpha+2}\mathbf e_{j}\|_2\|\mathbf e_k^T\mathbf U_{\alpha}\|_2\notag\\
|[\mathbf U^{(j,k)}_{\alpha}\mathbf J\mathbf R\mathbf V^{(j,k)}_{\alpha}]_{kj}|&\le\frac1v
\|\mathbf V^{(j,k)}_{\alpha}\mathbf e_{k}\|_2\|\mathbf e_j^T\mathbf U^{(j,k)}_{\alpha}\|_2.\notag\\
\end{align}
Applying inequalities (\ref{fin8}) and (\ref{fin9}) and taking in account 
the independence of entries, we get
\begin{align}
\E |{X_{j,k}^{(\alpha)}}^3||[\mathbf P^{(4)}]_{kj}|\le \frac1{nv^2}\E |{X_{j,k}^{(\alpha)}}^3 \E\|\mathbf V_{m-\alpha+2}\mathbf e_{k}\|_2^2\|\mathbf e_j^T\mathbf U_{\alpha}\|_2^2\|\mathbf V^{(j,k)}_{\alpha}\mathbf e_{k}\|_2\|\mathbf e_j^T\mathbf U^{(j,k)}_{\alpha}\|_2
\end{align}
Applying Lemma \ref{norm4}, we get
\begin{equation}
 \frac1{n^{\frac32}}\sum_{j=1}^{p_{\alpha-1}}\sum_{k=1}^{p_{\alpha}}\E |X_{jk}^{(\alpha)}|^3
|[\mathbf P^{(4)}]_{kj}|\le \frac{C}{n^{\frac52}}\sum_{j=1}^{p_{\alpha-1}}\sum_{k=1}^{p_{\alpha}}\E |X_{jk}^{(\alpha)}|^3
\end{equation}
Assumption (\ref{as1}) now  yields
\begin{equation}
 \frac1{n^{\frac32}}\sum_{j=1}^{p_{\alpha-1}}\sum_{k=1}^{p_{\alpha}}\E |X_{jk}^{(\alpha)}|^3
|[\mathbf P^{(4)}]_{kj}|\le C\tau_n.
\end{equation}
Similar we get the bounds for  $\nu=5,6,7$
\begin{equation}
 \frac1{n^{\frac32}}\sum_{j=1}^{p_{\alpha-1}}\sum_{k=1}^{p_{\alpha}}\E |X_{jk}^{(\alpha)}|^3
|[\mathbf P^{(\nu)}]_{kj}|\le C\tau_n.
\end{equation}
and
\begin{equation}
 |L_{\mu}|\le C\tau_n,\quad \mu=1,\ldots,5.
\end{equation}

The bound of the quantity 
\begin{equation}\label{fin1*}
 \widehat L_{\mu}=\sum_{j=1}^{p_{\alpha-1}}\sum_{k=1}^{p_{\alpha}}\E {X_{j,k}^{(\alpha)}}
\left[\frac{\partial \mathbf Q_{\nu}}{\partial X_{jk}^{(\alpha)}}\right]_{kj}.
\end{equation}
is similar.
Thus, the Lemma is proved.

\end{proof}
\begin{lem}\label{teilor}Under the  conditions of Theorem \ref{main} we have
\begin{equation}\notag
\sum_{j=1}^{p_{\nu-1}}\sum_{k=1}^{p_{\nu}}\E X_{jk}^{(\nu)}[\mathbf V_{\nu+1,m}\mathbf J\mathbf R\mathbf V_{1,m-\nu+1}]_{kj}=\sum_{j=1}^{p_{\nu-1}}\sum_{k=1}^{p_{\nu}}\E\left[\frac{\partial \mathbf V_{\nu+1,m}\mathbf J\mathbf R\mathbf V_{1,m-\nu+1}}{\partial X_{jk}^{(\nu)}}\right]_{kj}+\varepsilon_n(z)
\end{equation}
and
\begin{align}
\sum_{j=1}^{p_{m-\nu}}\sum_{k=1}^{p_{m-\nu+1}}&\E X^{(m-\nu+1)}_{j,k}[\mathbf V_{\nu+1,m}\mathbf J\mathbf R\mathbf V_{1,m-\nu+1}]_{j+p_{\nu-1},k}\notag\\&=\sum_{j=1}^{p_{m-\nu}}\sum_{k=1}^{p_{m-\nu+1}}\E\left[\frac{\partial \mathbf V_{\nu+1,m}\mathbf J\mathbf R\mathbf V_{1,m-\nu+1}}{\partial X_{jk}^{(\nu)}}\right]_{j+p_{\nu-1},k}+\varepsilon_n(z),\notag
\end{align}
where $|\varepsilon_n(z)|\le\frac{C\tau_n}{v^4}$.
\end{lem}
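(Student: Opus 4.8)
The plan is to obtain both identities from two Taylor expansions of a single scalar entry, the resulting residual then being precisely the quantity estimated in Lemma~\ref{derivatives}; put differently, the lemma measures how much the Gaussian integration-by-parts identity $\E\xi f(\xi)=\E f'(\xi)$ fails for the truncated centred entries. Fix $\nu$ and a pair $(j,k)$ with $1\le j\le p_{\nu-1}$, $1\le k\le p_{\nu}$; set $\xi:=X^{(\nu)}_{jk}$ and $\mathbf T:=\mathbf V_{\nu+1,m}\mathbf J\mathbf R\mathbf V_{1,m-\nu+1}$, and regard $g(\xi):=[\mathbf T]_{kj}$ as a function of the single variable $\xi$, all remaining entries of all the matrices $\mathbf X^{(q)}$ being held fixed. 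Since $\widehat{\mathbf V}-z\mathbf I$ is invertible with $\|\mathbf R\|\le v^{-1}$, the map $\xi\mapsto g(\xi)$ is smooth (in fact a rational function of $\xi$ with poles off the real line), its first and second derivatives being given by (\ref{deriv}) and by the further differentiation performed in the proof of Lemma~\ref{derivatives}. Moreover $g(0)$ --- obtained by deleting the entry $X^{(\nu)}_{jk}$ wherever it occurs in $\mathbf H^{(\nu)}$ and $\mathbf H^{(m-\nu+1)}$ --- and $g'(0)$ --- given by those derivative formulas at $X^{(\nu)}_{jk}=0$ --- are functions of the entries other than $\xi$, hence independent of $\xi$.

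Next I would expand $g$ to second order and $g'$ to first order with integral remainder:
\begin{align}
g(\xi)&=g(0)+g'(0)\,\xi+\xi^{2}\int_0^1(1-t)\,g''(t\xi)\,dt,\notag\\
g'(\xi)&=g'(0)+\xi\int_0^1 g''(t\xi)\,dt,\notag
\end{align}
where $g''(t\xi)$ denotes $[\partial^{2}\mathbf T/\partial (X^{(\nu)}_{jk})^{2}]_{kj}$ computed with $X^{(\nu)}_{jk}$ replaced by $tX^{(\nu)}_{jk}$, in the notation of Lemma~\ref{derivatives}. Multiplying the first line by $\xi$ and taking full expectations, the terms $\E\xi g(0)$ and $\E g'(0)\xi^{2}$ reduce, by $\E\xi=0$, $\E\xi^{2}=1$ and the independence just noted, to $0$ and to $\E g'(0)$, so that $\E[\xi g(\xi)]=\E g'(0)+\E[\xi^{3}\int_0^1(1-t)g''(t\xi)\,dt]$; the second line gives $\E[g'(\xi)]=\E g'(0)+\E[\xi\int_0^1 g''(t\xi)\,dt]$. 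Subtracting, and writing $\int_0^1 h(t)\,dt=\E h(\theta_{jk})$ with $\theta_{jk}$ uniform on $[0,1]$ and independent of all other variables (the auxiliary device already used in Lemma~\ref{derivatives}), one obtains, for each $(j,k)$,
$$\E\bigl[X^{(\nu)}_{jk}\,[\mathbf T]_{kj}\bigr]-\E\Bigl[\frac{\partial\mathbf T}{\partial X^{(\nu)}_{jk}}\Bigr]_{kj}=\E\Bigl[\bigl(-X^{(\nu)}_{jk}+(1-\theta_{jk})(X^{(\nu)}_{jk})^{3}\bigr)\Bigl[\frac{\partial^{2}\mathbf T}{\partial (X^{(\nu)}_{jk})^{2}}(\theta_{jk}X^{(\nu)}_{jk})\Bigr]_{kj}\Bigr].$$

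Summing this over $j=1,\ldots,p_{\nu-1}$ and $k=1,\ldots,p_{\nu}$, the left-hand side is exactly the difference of the two sums in the first assertion of the lemma, while the right-hand side is precisely the sum estimated in Lemma~\ref{derivatives}; it is therefore a generic error $\varepsilon_n(z)$ with $|\varepsilon_n(z)|\le C\tau_n v^{-4}$ (after the normalization $(2n\sqrt{p_{\nu}})^{-1}$ under which the lemma is applied, which absorbs the factor $n^{3/2}$ appearing in Lemma~\ref{derivatives}). The second assertion --- with $X^{(m-\nu+1)}_{jk}$, the derivative $\partial/\partial X^{(m-\nu+1)}_{jk}$ and the entries $[\,\cdot\,]_{j+p_{\nu-1},k}$ --- follows in the same way, now taking $\xi=X^{(m-\nu+1)}_{jk}$ and quoting the second estimate of Lemma~\ref{derivatives}. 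The only genuine work is that lemma itself: one expands $\partial^{2}\mathbf T/\partial (X^{(\nu)}_{jk})^{2}$ into its elementary pieces (products of the matrices $\mathbf V_{\cdot,\cdot}$, $\mathbf R$ and rank-one factors $\mathbf e_j\mathbf e_k^T$), discards those that vanish --- some identically, for block-structure reasons, others in expectation --- extracts the factor $p_{\nu}^{-1}\asymp n^{-1}$ produced by the double differentiation, and bounds $\sum_{j,k}$ of the products of the remaining entries by H\"older's inequality together with $\E\|\mathbf V_{\alpha,\beta}\mathbf e_k\|_2^{2r}\le C_r$ (Lemma~\ref{norm4}), $\E\|\mathbf V_{\alpha,\beta}\|_2^2\le Cn$ (Lemma~\ref{norm2}) and the bounds $\E|X^{(\nu)}_{jk}|^3\le c\tau_n\sqrt n$, $|X^{(\nu)}_{jk}|\le c\tau_n\sqrt n$ a.s.\ from (\ref{as1}). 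This last step is the one I expect to be the main obstacle.
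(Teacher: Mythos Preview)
Your proposal is correct and follows essentially the same route as the paper: the paper also applies Taylor's formula twice, once to $\xi f(\xi)$ and once to $f'(\xi)$, eliminates $f'(0)$ between the two, and obtains exactly the residual $\E\bigl[(-\xi+(1-\theta)\xi^{3})f''(\theta\xi)\bigr]$ that is then controlled by Lemma~\ref{derivatives}. Your integral-remainder formulation and the paper's ``$\theta$ uniform on $[0,1]$'' formulation are the same object, and your remark about the normalisation $n^{-3/2}$ versus the outer factor $(2n\sqrt{p_\nu})^{-1}$ correctly reconciles the statements of Lemmas~\ref{derivatives} and~\ref{teilor}.
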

\begin{proof}
 We apply Taylor's formula twice,
\begin{equation}\notag
 \E\xi f(\xi)=f'(0)\E\xi^2+\E\xi^3f''(\theta\xi)(1-\theta),
\end{equation}
and
\begin{equation}
  f'(0)=\E f'(\xi)-\E\xi f''(\theta\xi)
\end{equation}
where $\theta$ denotes  uniformly  distributed r.v. on the unit interval 
which is  independent of $\xi$.
After simple calculations we get
\begin{align}
 \sum_{j=1}^{p_{\nu-1}}\sum_{k=1}^{p_{\nu}}\E X_{jk}^{(\nu)}&[\mathbf V_{\nu+1,m}\mathbf J\mathbf R\mathbf V_{1,m-\nu+1}]_{kj}=
\sum_{j=1}^{p_{\nu-1}}\sum_{k=1}^{p_{\nu}}\E\left[\frac{\partial \mathbf V_{\nu+1,m}\mathbf J\mathbf R\mathbf V_{1,m-\nu+1}}{\partial X_{jk}^{(\nu)}}\right]_{kj}\notag\\
&+\sum_{j=1}^{p_{\nu-1}}\sum_{k=1}^{p_{\nu}}\E(-X_{jk}^{(\nu)}+(1-\theta_{jk}){X_{jk}^{(\nu)}}^3)\left[\frac{\partial^2 \mathbf V_{\nu+1,m}\mathbf J\mathbf R\mathbf V_{1,m-\nu+1}}{{\partial X_{jk}^{(\nu)}}^2}(\theta_{jk}^{(\nu)}
X_{jk}^{(\nu)})\right]_{kj}.\notag
\end{align}
Using the results of Lemma \ref{derivatives}, we conclude the proof.

\end{proof}



\end{document}